\documentclass[reqno]{amsart}
\usepackage{amssymb}
\usepackage{amsmath}
\usepackage{amsthm}
\usepackage{mathtools}
\usepackage{graphicx}
\usepackage{comment}
\usepackage{caption}
\usepackage{calc}
\usepackage[font=small,labelfont=small]{caption}
\usepackage[style=numeric,maxbibnames=99]{biblatex}
\usepackage{float}
\usepackage{csquotes}
\DeclareNameAlias{default}{last-first}

\usepackage{tikz}
\usepackage{tikz-3dplot}
\tdplotsetmaincoords{70}{110} 

\usetikzlibrary{calc}
\usetikzlibrary{arrows}

\usepackage{comment}

\usepackage{hyperref}
\hypersetup{colorlinks}

\usepackage{amssymb, mathrsfs, amsfonts, amsmath}

\makeatletter
\def\part#1{%
  \refstepcounter{part}%
  \bigskip
  \begin{center}
    \normalfont\bfseries
    \Large Part~\thepart: #1
  \end{center}
  \medskip
}
\makeatother

\usepackage{tikz}
\usetikzlibrary{arrows}

\usepackage[english]{babel}

\usepackage[margin=1.4in, centering]{geometry}

\DeclareSymbolFont{bbold}{U}{bbold}{m}{n}
\DeclareSymbolFontAlphabet{\mathbbold}{bbold}
\newtheorem{thm}{Theorem}[section]
\newtheorem{prop}[thm]{Proposition}
\newtheorem{lem}[thm]{Lemma}
\newtheorem{cor}[thm]{Corollary}
\theoremstyle{definition}
\newtheorem{defn}[thm]{Definition}

\theoremstyle{remark}
\newtheorem{rem}[thm]{Remark}

\newtheorem{conj}[thm]{Conjecture}

\usepackage{graphicx} 

\newcommand{\R}{\mathbb{R}}

\newcommand{\E}{\mathcal{E}}
\newcommand{\A}{\textnormal{Area}}

\tikzset{vtx/.style={circle, fill, inner sep=1.5pt}}

\title[On volume vectors determined by hypergraphs]{On volume vectors determined by hypergraphs in thin subsets of Euclidean space}

\author[T. Borges, B. Foster, Y. Ou, E. Palsson, F. Romero Acosta]{Tainara Borges, Ben Foster, Yumeng Ou, Eyvindur Palsson, Francisco Romero Acosta}

\address[T. Borges]{Department of Mathematics, University of Pennsylvania, Philadelphia, PA 19104, USA}\email{tborges@sas.upenn.edu}
\address[B. Foster]{Department of Mathematics, Washington University in St. Louis, St. Louis, MO 63130, USA}\email{bfoster@wustl.edu}
\address[Y. Ou]{Department of Mathematics, University of Pennsylvania, Philadelphia, PA 19104, USA}\email{yumengou@sas.upenn.edu}
\address[E. Palsson]{Department of Mathematics, Virginia Tech, Virginia, VA 24061, USA}\email{palsson@vt.edu}
\address[F. Romero Acosta]{MSI, The Australian National University, ACT, Australia}\email{Francisco.Romero@anu.edu.au}

\begin{document}
\maketitle

\begin{abstract}
Generalizing the Falconer distance problem, the authors of this paper recently established the first non-trivial dimensional threshold for any distance graph in high enough of a dimension. The methods developed were flexible enough to generalize from the Euclidean distance to any two point configuration, conditional on results on $k$-stars for the two point configuration. A natural question emerges on what happens to configurations that take in more than two points. In this paper we consider a classic three point variant of the Falconer distance problem, namely that on areas of triangles and its generalizations to volumes of simplices. In this model case we develop two methods. One we call the Jacobian method which allows us, through Heron's formula, to leverage earlier results on distance graphs and obtains non-trivial thresholds for volume vectors determined by a wide range of hypergraphs of simplices. Even in the classic case of the volume of a single simplex this method yields the best known dimensional thresholds if the dimension is considerably bigger than the size of the simplex. We develop a conjecture that has connections to rigidity theory. The Jacobian method works best in high dimensions so in the case of areas of triangles in the plane, we refine the work of Shmerkin and Yavicoli, who recently resolved a conjecture for areas of triangles in the plane, and obtain building blocks from which we can get abundance of area vectors determined by certain hypergraphs of triangles, such as chains of triangles connected on edges or vertices. The results improve and extend existing results of Galo and McDonald as well as of Greenleaf, Iosevich and Taylor.
\end{abstract}

\section{Introduction}

The Falconer distance problem, which can be viewed as a continuous analog of the Erd\H{o}s distinct distance problem, asks for a compact set $E\subset\mathbb{R}^d$, $d\geq 2$, how large its Hausdorff dimension $\dim_{H}(E)$ needs to be to guarantee that its distance set
$$ \Delta(E) = \lbrace |x-y| : x,y\in E \rbrace $$
has positive Lebesgue measure. Falconer showed the threshold $\frac{d}{2}$ was necessary and conjectured it was sufficient \cite{Falconer85}. In recent years there has been much progress on this conjecture with the current best thresholds being $\frac{5}{4}$ when $d=2$ and $\frac{d}{2} + \frac{1}{4} - \frac{1}{8d+4}$ when $d\geq 3$ \cite{GIOW20,DORZ23}. A classic variant, the pinned Falconer distance problem, similarly asks how large $\dim_{H}(E)$ needs to be to guarantee the existence of a pin $x\in E$ such that the pinned distance set
$$ \Delta_{x}(E) = \lbrace |x-y| : y\in E \rbrace $$
has positive Lebesgue measure. Any threshold established for the pinned problem implies the same threshold works for the original problem. Surprisingly, due to a conversion mechanism by Liu \cite{Liu19}, all the best thresholds for the Falconer distance problem also hold for the pinned variant.

A big push in the field has been towards more complicated configurations built out of distances. In the greatest generality one can consider a distance graph. More precisely, for a graph $G=(\mathcal{V},\mathcal{E})$ denote the $G$-distance set of a compact subset $E\subset\mathbb{R}^d$ by
$$ \Delta^{G}(E):= \lbrace (|x_{i}-x_{j}|)_{(v_i,v_j)\in\mathcal{E}} : x_1,\ldots,x_{|\mathcal{V}|}\in E \rbrace $$
where $\mathcal{V}=\{v_1,v_2,\dots ,v_{|\mathcal{V}|}\}$ is the set of vertices of $G$ and, for convenience, we represent the edges in $\mathcal{E}$ as ordered pairs $(v_i,v_j)$ with $i<j$. We say the graph $G$ is \emph{non-trivial} in this setting if $|\mathcal{E}|\geq 1$ so the $G$-distance set is not vacuous.  To take an example of a $G$-distance set, if $G=K_3$, the complete graph on $3$ vertices, we obtain the set of triangles, namely $\Delta^{K_3}(E)=\lbrace (|x_1 - x_2|,|x_1 - x_3|,|x_2 - x_3|) : x_1,x_2,x_3 \in E\rbrace$. Numerous papers have appeared that address particular configurations such as chains and trees \cite{BIT16,IT19,OT22,pinnedtrees}, triangles and simplices \cite{GI12,EHI13,GGIP15,GILP15,GIT22,PRA23,GIT24,PRA25,IPPS22} and cycles \cite{GIP17,IMMM25}. Some of these papers develop frameworks within which results for individual configurations can be obtained, but an easily computable result for any graph $G$ remained elusive for a while.

In \cite{BFOPRpaperI} the authors of this paper obtained a non-trivial threshold for a Falconer type result for any non-trivial distance graph in a high enough dimension. A classic notion from graph theory is that a graph $G$ is said to be $k$-degenerate if $k$ is the least number such that every induced subgraph of $G$ contains a vertex with $k$ or fewer neighbors. Using this notion, the authors established that if $G=(\mathcal{V},\mathcal{E})$ is a non-trivial graph that is $k$-degenerate then for any compact set $E\subset\mathbb{R}^d$, $d>k$, with $\dim(E)>\frac{d+k}{2}$ the $|\mathcal{E}|$-dimensional Lebesgue measure of $\Delta^{G}(E)$ is positive. Further extensions were given to distance graphs with multiple pins. Not only did this result apply in great generality, but for individual configurations, such as cycles, this result further gave the best thresholds known.

Another popular generalization is to consider more general two point configurations $\Phi:\mathbb{R}^{d}\times\mathbb{R}^{d}\rightarrow\mathbb{R}^k$ and their corresponding configuration sets $ \Delta^{\Phi}(E) = \lbrace \Phi(x,y) : x,y\in E \} $. One example is the dot product, namely $\Phi(x,y)=x\cdot y$, as studied in \cite{EIT11,EHI13,BMS24}, but see also \cite{IMT12,GIT21}. In \cite{BFOPRpaperI} the authors of this paper obtained general results on $\Phi$-graphs, analogous to the results on distance graphs, but conditional on there existing results for certain building blocks called $k$-stars. For distances such results come from \cite{IPPS22} but are not known for other configurations.

The study of general configuration functions $\Phi$ that take in more than two points was initiated in \cite{GGIP15} and see further progress in \cite{GIT22,GIT24}. While these individually can encode complicated configurations, such as any distance graph, then an interesting question is what can be built from a single fixed configuration function $\Phi$. When $\Phi$ takes in more than two points, it does not make sense anymore to build more complicated configurations through graphs. Instead, one needs hypergraphs where edges connect multiple vertices, which describe the inputs for the function $\Phi$. A first step in this direction was achieved in \cite{GIT25}, where under strong non-empty interior type results on the base $\Phi$ configuration, the authors were able to build trees of such configurations. In this paper one of our main goals is to see whether ideas from \cite{BFOPRpaperI} carry over to this hypergraph setting.

We illustrate how our framework can be extended to hypergraphs by studying a particular example, namely areas of triangles. Let $\text{Area}(x_1,x_2,x_3)$ denote the area of the triangle with vertices $x_1, x_2, x_3$ and let
$$ A_{\Delta}(E) := \lbrace \text{Area}(x_1,x_2,x_3) : x_1,x_2,x_3\in E \rbrace $$
denote the set of areas of triangles realized in a compact set $E\subset\mathbb{R}^d$, $d\geq 2$. The area set can then be naturally pinned, which we denote by $ (A_{\Delta})_{x_1}(E) := \lbrace \text{Area}(x_1,x_2,x_3) : x_2,x_3\in E \rbrace $ for a single pin and for two pins we denote it by $ (A_{\Delta})_{x_1,x_2}(E) := \lbrace \text{Area}(x_1,x_2,x_3) : x_3\in E \rbrace $.

Areas of triangles have been studied extensively, with the first result in this direction having been established by Erdo\u{g}an, Hart and Iosevich \cite{EHI13}, who obtained the threshold $\frac{3}{2}$ for areas of triangles in $\mathbb{R}^2$ pinned at the origin which turns the problem into a statement about dot products. A case of particular interest is considering triangles in the plane, their natural ambient space. Grafakos, Greenleaf, Iosevich and the fourth listed author of this paper \cite{GGIP15}, obtained the dimensional threshold $\frac{5}{4}$ for getting a positive Lebesgue measure of $A_{\Delta}(E)$ and conjectured that the threshold $1$ should be sufficient. It is clear that the threshold $1$ is necessary by simply considering the example of $E$ being a line. Very recently Shmerkin and Yavicoli \cite{SY25} resolved this conjecture in a strong way where they obtained that there exist two pins $x_1,x_2\in E$ such that $(A_{\Delta})_{x_1,x_2}(E)$ has positive Lebesgue measure. Many other variants have also been established. Most closely related is the work of Greenleaf, Iosevich and Taylor, who obtained non-empty interior for areas of triangles in the plane at a threshold $\frac{5}{3}$ \cite{GIT22}. McDonald obtained results on configurations called area types where multiple areas of triangles were computed in the plane \cite{McDonald21} and this was extended to volumes in the matching ambient dimension by Galo and McDonald \cite{GM22}. In particular, positive appropriate Lebesgue measure of certain chains of areas of triangles pinned at the origin in the plane follow from \cite{GM22} at a dimensional threshold $\frac{3}{2}$. Further results on tuples of areas were obtained by Greenleaf, Iosevich and Taylor \cite{GIT24} with their most recent work \cite{GIT25} obtaining results on non-empty interior of trees of areas of triangles in the plane at the threshold $\frac{5}{3}$. Many of these results extend more generally to simplices and we show some generalizations in that direction too.

A natural starting point towards more complicated configurations built from areas of triangles could be the resolution of Shmerkin and Yavicoli of the threshold for obtaining a positive Lebesgue measure of areas of triangles in the plane. Their result has two drawbacks for our purposes. First, while the result of Shmerkin and Yavicoli establishes a double pinned result for areas of triangles in the plane, it does not quantify the number of good pins. Quantifying the number of pins is essential in our arguments that use triangles as building blocks for more complicated graphs. Second, for more complicated configurations, it may be necessary to go into a high enough dimension and there the result of Shmerkin and Yavicoli becomes far from sharp\footnote{The statement of Theorem 1.1 in \cite{SY25} claims the dimensional threshold $1$ for the double pinned area of triangles in any dimension $d\geq 2$. However, as confirmed through personal communication with the authors, the proof does not yield that when $d\geq 3$. The problem is that the proof finds lots of distances to a certain hyperplane in which the base of the triangle lies, but when $d\geq 3$ there could possibly be many distances to the hyperplane, while the distances to the base could be very few.}. By a slicing method, that iterates on their result, one can obtain a dimensional threshold of $d-1$. In $\mathbb{R}^d$, $d\geq 3$, we develop a method that we call the Jacobian method, that fundamentally builds on the idea that by Heron's formula the area of a triangle can be determined by its side lengths. Leveraging our earlier results in \cite{BFOPRpaperI} on distance graphs, we are able to get Falconer type results for a broad range of complicated configurations where we calculate areas of multiple triangles. We emphasize that even in the case of a pinned area of a single triangle our dimensional threshold of $\frac{d+2}{2}$ (with slight improvements when $d=3$ with a threshold $2$ being possible through a slicing argument) is new. In $\mathbb{R}^2$ we refine the result of Shmerkin and Yavicoli by quantifying the number of pins for a pinned area of triangles result in Theorem \ref{thm:trianglebuildingblockintheplane}. This allows us, through inductive graph building arguments, to obtain positive Lebesgue measure worth of pinned chains of areas of triangles connected on vertices in the plane at a dimensional threshold $1$, extending the results of Galo and McDonald, as well as results on chains of areas of triangles connected by edges in the plane. The setting of triangles connected by edges is a new phenomenon that appears in this hypergraph setting. Classically, only triangles attached at vertices had been studied in the literature, with a notable exception being the triangle chains of Galo and McDonald \cite{GM22} and a few configurations studied by Greenleaf, Iosevich and Taylor \cite{GIT22,GIT24}.

\subsection{Jacobian method}
In Section \ref{sec: jacobianmethod}, we develop what we call the Jacobian method. While a more classical approach would just take in points from a hypergraph and calculate areas of triangles designated by the hypergraph, we instead approach this based on the observation that, by Heron's formula, the area of a triangle can be computed in terms of its side lengths. As we are now detecting the presence of not just edges in a graph, but also $3$-cycle, we thus introduce the notion of a simplicial complex in Section \ref{sec: preliminaries}, in order to not just record the triangles that we compute the areas of, but also their side lengths. For a simplicial complex $G$, we denote by $(A_{\Delta})^G(E)$ the triangle area set determined by $G$, which for whichever realization of the graph in $E$, collects the vector with all areas of triangles that we include in the simplicial structure; see \eqref{eq area vector set} for the precise definition. Informally, we say that edge area transferal holds for $G$ if for a particular dimensional threshold, a Falconer type result for the distance graph that the edges of the triangles form can be transferred to $(A_{\Delta})^G(E)$, see Definition \ref{def: edgetransferal} for more details. 

Leveraging our previous results on Falconer type results for distance graphs in \cite{BFOPRpaperI}, our main result is the following.

\begin{thm}\label{thm: mainJacobianthm}
    Let $G$ be a $2$-dimensional simplicial complex with the property that one edge may be selected injectively from each triangle in such a way that no triangle has all of its edges selected. Then edge area transferal holds for $G$.
\end{thm}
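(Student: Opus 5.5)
The idea is to pass from the distance vector to the area vector by a locally invertible map and then use the area formula. Let $H=(\mathcal V,\mathcal E)$ be the distance graph formed by the vertices and edges of $G$, and let $T$ be the set of triangles of $G$. Consider the map
$$\Psi:\mathbb{R}_{>0}^{\mathcal E}\to\mathbb{R}^{T},\qquad (t_e)_{e\in\mathcal E}\mapsto\bigl(\mathcal{A}(t_{e_1(\tau)},t_{e_2(\tau)},t_{e_3(\tau)})\bigr)_{\tau\in T},$$
where $\mathcal A$ is Heron's formula written in the squared side lengths:
$$16\,\mathcal{A}^2=2(ab+bc+ca)-a^2-b^2-c^2,\qquad a,b,c=t_e^2 .$$
By construction, $\Psi(\Delta^H(E))=(A_\Delta)^G(E)$. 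The same holds for the pinned versions, since pins are vertices and $\Psi$ does not see them.

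Edge area transferal asks that, whenever $\Delta^H(E)$ (possibly pinned) has positive $|\mathcal E|$-dimensional measure at the relevant threshold, $(A_\Delta)^G(E)$ has positive $|T|$-dimensional measure. It therefore suffices to show that $\Psi$ is a real-analytic map whose Jacobian has full rank $|T|$ off a proper analytic subvariety $Z\subset\mathbb{R}^{\mathcal E}$. Given this, $\Delta^H(E)\setminus Z$ still has positive measure, because a proper analytic subvariety is Lebesgue null. Cover it by countably many open sets on which $\Psi$ is a submersion; one of them meets $\Delta^H(E)$ in positive measure. On that set the coarea formula, or Fubini in submersion coordinates, gives that the image has positive $|T|$-measure.

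Full rank of $D\Psi$ is the combinatorial step, and this is where the hypothesis enters. Fix an injection $\sigma:T\to\mathcal E$ with $\sigma(\tau)\subset\tau$, such that no triangle has all three of its edges in $\sigma(T)$. Consider the $|T|\times|T|$ minor of $D\Psi$ with columns $\{\sigma(\tau)\}$. Because $\det$ of this minor is analytic on the connected domain of nondegenerate configurations, it suffices to find a single point where it is nonzero.

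To find that point, freeze all edges outside $\sigma(T)$ and study the dependence on the selected edges. Since every triangle has at least one unselected edge, I would choose the point to make each triangle sensitive essentially only to its own selected edge. The most natural attempt is a degenerate limit: set every unselected edge to $1$ and every selected edge to $\varepsilon$ small. If a triangle has exactly one selected edge, its area depends on that edge with nonzero derivative. Cross terms arise from triangles with two selected edges, $\sigma(\tau)$ and $\sigma(\tau')$ for some other triangle $\tau'$. Then $\partial\mathcal A_\tau/\partial t_{\sigma(\tau')}$ is nonzero, which makes the minor only "triangular-like."

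I expect this step to be the main obstacle. The first attempt would be to order $T$ so that the minor becomes block triangular, using a scaling $t_{\sigma(\tau)}=\varepsilon^{k_\tau}$ with distinct exponents chosen along an ordering. Then $\partial_{a}\mathcal A$, with $a$ the squared selected edge, is dominated by terms coming from its own edge, and the determinant's leading term in $\varepsilon$ is a nonzero product. The hypothesis that no triangle is fully selected is exactly what keeps each triangle with a fixed side of length $1$, so that it does not collapse and the derivative $\partial_a(16\mathcal A^2)=2(b+c-a)$ stays of order $1$.

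A cleaner alternative is to compute directly in squared variables. There $\partial_a(16\mathcal A^2)=2(b+c-a)$, and the minor is the matrix $M_{\tau,\tau'}=\mathbf 1[\sigma(\tau')\subset\tau]\,(b+c-a)$. One then chooses the values so that the diagonal entries are large, for example by making the selected edges tiny and the unselected edges equal to $1$, which gives diagonal entries close to $2$ or $0$. Finally one checks by induction on $|T|$ that this matrix is nonsingular.

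Finally, I would verify that such a nondegenerate point is realizable. Realizability is not actually needed, because the measure argument only uses $\Psi$ on $\mathbb{R}^{\mathcal E}$ restricted to $\Delta^H(E)$, and the zero set of the nonzero analytic determinant is null. This completes the transfer.
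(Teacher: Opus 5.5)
Your reduction matches the paper's: it suffices to exhibit one input, not necessarily a realizable configuration, at which the $|T|\times|T|$ minor of $D\Psi$ on the columns $\sigma(T)$ is nonzero. The gap is in the step that is supposed to produce that input.

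Take a triangle with two selected edges, with own squared length $a$, other selected edge $b$, and unselected edge $c$. Its diagonal entry $2(b+c-a)$ and its cross entry $2(a+c-b)$ are both dominated by the same unselected $c$. So shrinking the selected edges, with or without distinct exponents $\varepsilon^{k_\tau}$, never makes the diagonal dominate.

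Concretely, take the wheel with hub $w$, rim $v_1,\dots,v_4$, triangles $F_j=\{w,v_j,v_{j+1}\}$ (indices mod $4$) and $\sigma(F_j)=\{w,v_j\}$. This satisfies the hypotheses. With rim edges equal to $1$ and all spokes equal, the minor is exactly $2(I+P)$, where $P$ is the cyclic shift of order $4$, and $2(I+P)$ annihilates $(1,-1,1,-1)$. With spokes $\varepsilon^{k_j}$ the minor tends to this singular matrix.

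In general, a cycle $F_1\to\cdots\to F_L$ of doubly-selected triangles contributes the factor $\prod_j d_j+(-1)^{L-1}\prod_j n_j$ to the minor's determinant. For even $L$, with unselected edges equal to $1$, both its zeroth- and first-order terms cancel; the first-order term vanishes because the differences $a_{j+1}-a_j$ telescope to $0$ around the cycle. So your claim that the leading term in $\varepsilon$ is a nonzero product of diagonal entries is false. The nonsingularity you propose to check by induction on $|T|$ also fails at that very point. Incidentally, the diagonal entries there are near $4$ or $2$, not $2$ or $0$.

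The missing idea is to use the sparsity of the minor rather than try to scale it away. Each row has at most two nonzero entries, so $\tau\mapsto\tau'$, where $\sigma(\tau')$ is the second selected edge of $\tau$, is a partial function. Listing the triangles by following it makes the minor block lower triangular. Each diagonal block is upper bidiagonal except for at most one entry closing a cycle.

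The paper deletes each closing entry by imposing a linear relation in the closing triangle, $a=b-c$ in your notation. This makes the cross entry $2(a+c-b)$ vanish and the diagonal entry equal to $4c$. The relation is allowed precisely because the input need not be realizable.

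The minor then becomes triangular. Its diagonal entries are nonzero linear forms in the remaining free variables; this uses injectivity of $\sigma$ and the fact that distinct triangles share at most one edge. A generic choice of those variables therefore gives a nonzero determinant. Alternatively, you could stay at a generic point and prove directly that each cycle factor $\prod_j d_j+(-1)^{L-1}\prod_j n_j$ is not the zero polynomial, but some such argument is needed.
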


Using the theorem above, we can, for example, find Hausdorff dimension thresholds on $E\subset \R^d$ to guarantee positive measure of vectors of triangle areas realized in a stack of four triangles like in Figure \ref{fig:stack4triangles}, with vertices varying in $E$.

\begin{figure}[h!]
    \centering
\begin{tikzpicture}
  \coordinate (v1) at (0,0);
  \coordinate (v2) at (2,0);
  \coordinate (v3) at (4,0);
  
  \coordinate (v4) at (1, {sqrt(3)});
  \coordinate (v5) at (3, {sqrt(3)});
  
  \coordinate (v6) at (2, {2*sqrt(3)});

  \draw[thick, blue] (v1) -- node[below, black] {$e_1$} (v2);
  \draw[thick, blue] (v2) -- node[below, black] {$e_3$} (v3);
  \draw[thick] (v3) -- node[below right] {} (v5);
  \draw[thick, blue] (v5) -- node[above right] {$e_4$} (v6);
  \draw[thick] (v6) -- node[above left] {} (v4);
  \draw[thick] (v4) -- node[below left] {} (v1);
  
  \draw[thick] (v2) -- node[left] {} (v4);
  \draw[thick,blue] (v4) -- node[below] {$e_2$} (v5);
  \draw[thick] (v5) -- node[right] {} (v2);

  \node at (2, {4*sqrt(3)/3}) {$T_4$};
  \node at (1, {sqrt(3)/3}) {$T_1$};
  \node at (2, 0.85) {$T_2$};
  \node at (3, {sqrt(3)/3}) {$T_3$};

  \filldraw (v1) circle (2.5pt) node[below left] {$v_1$};
  \filldraw (v2) circle (2.5pt) node[below=4pt] {$v_2$};
  \filldraw (v3) circle (2.5pt) node[below right] {$v_3$};
  \filldraw (v4) circle (2.5pt) node[left=3pt] {$v_4$};
  \filldraw (v5) circle (2.5pt) node[right=3pt] {$v_5$};
  \filldraw (v6) circle (2.5pt) node[above=4pt] {$v_6$};
\end{tikzpicture}
\caption{A $2$-dimensional simplicial complex that looks like a stack of $4$ triangles (a few edge labels were omitted), and an edge selection function $T_i\mapsto e_i$ that respects the hypothesis of Theorem \ref{thm: mainJacobianthm}. The blue edges are the ones in the image of the edge selection function.} 
\label{fig:stack4triangles}
\end{figure}

Figure \ref{fig:stack4triangles} illustrates how we can select edges $e_i$ for each triangle $T_i$ in a way such that, for each triangle, not all its edges are in the image of the edge selection function. Since this graph $(G,\mathcal{V},\mathcal{E})$ is $2$-degenerate, the distance set $$\Delta^{G}(E)=\{(|x_i-x_j|)_{(v_i,v_j) \in \mathcal{E}}\colon x_i\in E,\,1\leq i\leq 6\}\subset \R^9$$ 
has positive Lebesgue measure as long as $\dim(E)>\frac{d+2}{2}$ (as proved in \cite{BFOPRpaperI}) and the theorem above allows us to transfer that to positive $4$-dimensional Lebesgue measure of the area set
$$(A_{\Delta})^{G}(E)=\left\{(\A(x_1,x_2,x_4),\A(x_2,x_4,x_5),\A(x_2,x_3,x_5),\A(x_4,x_5,x_6))\colon \{x_i\}_{i=1}^{6}\subset E\right\}.$$

One can also apply the theorem above to get a positive $4$ dimensional Lebesgue measure of the vectors of triangle areas appearing in the faces of tetrahedron with vertices in $E\subset \R^d$, $d\geq 4$, and $\dim(E)>\frac{d+3}{2}$ (note that a tetrahedron is a $3$-degenerate graph, so we are again using the threshold provided by \cite{BFOPRpaperI} for positive measure of $\Delta^G(E)$). See Figure \ref{fig:K4simplicial} for an example of a suitable edge choice function.

\begin{figure}[h!]
\centering
\begin{tikzpicture}[scale=1.3, every node/.style={font=\small}]
    \tikzset{vtx/.style={circle, fill=black, inner sep=1.8pt}}

    \coordinate (v1) at (-1.5,0);
    \coordinate (v2) at (1.5,0);
    \coordinate (v3) at (0,2);
    \coordinate (v4) at (0,0.8);

    \draw[thick] (v1) -- node[below] {$e_1$} (v2);
    \draw[thick,blue] (v1) -- node[left] {$e_2$} (v3);
    \draw[thick,blue] (v2) -- node[right] {$e_3$} (v3);
    \draw[thick,blue] (v1) -- node[below] {$e_4$} (v4);
    \draw[thick,blue] (v2) -- node[below] {$e_5$} (v4);
    \draw[thick] (v3) -- node[right] {$e_6$} (v4);

    \filldraw (v1) circle (2pt) node[below left] {$v_1$};
    \filldraw (v2) circle (2pt) node[below right] {$v_2$};
    \filldraw (v3) circle (2pt) node[above] {$v_3$};
    \filldraw (v4) circle (2pt) node[right] {$v_4$};


    \node[align=left, anchor=west] at (3.2,1.2) {
        $T_1:=(v_1,v_2,v_3)\mapsto e_2=(v_1,v_3)$\\
        $T_2:=(v_1,v_2,v_4)\mapsto  e_5=(v_2,v_4)$\\
        $T_3:=(v_1,v_3,v_4)\mapsto e_4=(v_1,v_4)$\\
        $T_4:=(v_2,v_3,v_4)\mapsto e_3=(v_2,v_3)$
    };
\end{tikzpicture}
\caption{The complete graph in $4$ vertices $K_4$ with a $2$-simplicial structure where $\mathcal{E}_2=\{T_1,T_2,T_3,T_4\}$ and an edge choice function that respects the hypothesis of Theorem \ref{thm: mainJacobianthm}.}
\label{fig:K4simplicial}
\end{figure}

We also prove a result that holds for pinned complexes. It requires an edge choice function that picks from each triangle an edge that is exclusive to that triangle, and that avoids picking pinned edges, that is, we cannot select an edge if both its comprising vertices are in the pinned vertex set. In the unpinned case, the hypothesis of the proposition below reduces to being able to select from each triangle an edge that is exclusive to that triangle, which is more restrictive than the hypothesis of Theorem \ref{thm: mainJacobianthm}.

\begin{prop}\label{prop: pinned suff transfer}
    Let $G=(\mathcal{V},\mathcal{E},\mathcal{P})$ be a pinned $2$-simplicial complex with pin set $\mathcal{P}\subset\mathcal{V}$ which has the property that for every $T\in \mathcal{E}_2$, there exists an edge $\{v,w\}\subset T$ with the property that $\{v,w\}\not\subset \mathcal{P}$ and for all $2$-simplices $T'\ne T$, we have that $\{v,w\}\not\subset T'$. Then length-area transferal holds for $G$.
\end{prop}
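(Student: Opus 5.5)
We need to show that positive measure of the (pinned) distance set transfers to positive measure of the area vector. The proof is a Jacobian argument. Write $\Phi:\mathbb{R}^{|\mathcal{E}_1'|}\to\mathbb{R}^{|\mathcal{E}_2|}$ for the map sending the vector of edge lengths (over the unpinned-admissible edges appearing in the pinned distance set) to the vector of triangle areas. Each area is computed by Heron's formula,
$$16\,\A^2 = 2a^2b^2+2b^2c^2+2c^2a^2-a^4-b^4-c^4,$$
so $\Phi$ is real-analytic away from degenerate triangles. Length-area transferal should follow once we know two things: the pinned distance set has positive measure on a set where $\Phi$ is smooth, and the Jacobian $D\Phi$ has full rank $|\mathcal{E}_2|$ almost everywhere on the relevant domain. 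Given these, the positive-measure image statement is the standard fact that a $C^1$ map of full rank on a positive-measure set has an image of positive measure. To see this, restrict to an open set where the rank is full, complete $\Phi$ locally to a submersion, and apply the coarea formula or Fubini over fibres.

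\textbf{Rank computation.} Let $e_T=\{v,w\}$ be the edge chosen for triangle $T$. Consider the $|\mathcal{E}_2|\times|\mathcal{E}_2|$ minor of $D\Phi$ whose columns are the partial derivatives with respect to the lengths $\ell_{e_T}$. The hypothesis $\{v,w\}\not\subset\mathcal{P}$ guarantees that $\ell_{e_T}$ is a genuine coordinate of the pinned distance set, not a fixed pin-to-pin distance. Since $e_T$ lies in no other 2-simplex, $\partial \A(T')/\partial \ell_{e_T}=0$ for $T'\neq T$. Hence the minor is diagonal, with entries $\partial \A(T)/\partial \ell_{e_T}$. From Heron, with $c=\ell_{e_T}$, we get
$$32\,\A\,\partial_c \A = 4c(a^2+b^2-c^2),$$
which is nonzero unless $c=0$, $\A=0$, or the angle opposite $e_T$ is right. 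Each of these cases defines a proper real-algebraic subset of the length space, and so has Lebesgue measure zero. The determinant is therefore a nonzero analytic function, and its zero set is null.

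\textbf{Assembling the argument.} Let $D$ be the pinned distance set, which has positive $|\mathcal{E}_1'|$-dimensional measure by the assumed Falconer-type threshold in the definition of length-area transferal. Remove from $D$ the null set $Z$ where some triangle is degenerate or the diagonal entries vanish. On the open complement of $Z$, $\Phi$ is a submersion. I would cover this complement by countably many open sets $U_j$ on which, after a linear change of coordinates, $\Phi$ together with the remaining non-selected lengths forms a diffeomorphism. Some $U_j$ meets $D$ in positive measure, and the diffeomorphism maps $D\cap U_j$ to a positive-measure set. Fubini then shows that its projection onto the area coordinates, which is contained in $(A_\Delta)^G_{\text{pins}}(E)$, has positive measure.

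\textbf{Main obstacle.} The step I expect to be delicate is matching the coordinates precisely with the definition of length-area transferal. The distance set must include the selected edges as free coordinates, and pinned edges must be treated as constants. One also has to check that Heron's formula, written in squared lengths, gives a map with nonvanishing derivative off a null set. This last point is why I would use the explicit diagonal-minor structure rather than any generic-rank argument.
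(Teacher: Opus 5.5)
Your proposal is correct and is essentially the paper's argument: each column of $D\Phi$ indexed by an exclusive unpinned edge has only one nonzero entry, so these columns form a diagonal maximal minor. The only difference is in checking the diagonal entries. The paper evaluates at one explicit input (selected lengths $3L$, all other free lengths $0$), which makes each entry $-3L+s+r$ negative whatever the frozen pinned lengths are, and then uses its general ``nonzero polynomial vanishes on a null set'' discussion. You instead show that $\partial_c \A = c(a^2+b^2-c^2)/(8\A)$ vanishes only on a null algebraic set, which is valid because the selected length $c$ is always a free coordinate.
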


This proposition allows us to handle an array of pinned $2$-simplicial complexes, including wheels, pinned fishes, or circumscribed flowers like the ones in the table in Figure \ref{fig:pinned-simplicial-showcase}. The pinned fan $F_{1,5}$ in that table, as well as more general fans with all but one vertex pinned, are handled in Theorem \ref{thm: k pinned fans}. For more details and proofs for those configurations, see Section \ref{sec: jacobianmethod}.

\begin{figure}[h!]
\centering
\begingroup
\setlength{\tabcolsep}{7pt}
\setlength{\arrayrulewidth}{0.4pt}
\renewcommand{\arraystretch}{1.25}

\begin{tabular}{|c|c|}
\hline

\begin{minipage}[t]{0.43\linewidth}
\centering
\vspace{0.4em}

\begin{tikzpicture}[scale=0.82, >=latex, every node/.style={font=\scriptsize}]
\tikzset{
  showvtx/.style = {circle, inner sep=0pt, minimum size=5pt},
  showedge/.style = {line width=0.75pt}
}

\coordinate (w)  at (0,0);

\coordinate (v1) at (1,1.6);
\coordinate (v2) at (2,1.1);
\coordinate (v3) at (2.4,0);
\coordinate (v4) at (2,-1.1);
\coordinate (v5) at (1,-1.6);

\draw[showedge] (w) -- (v1);
\draw[showedge] (w) -- (v2);
\draw[showedge] (w) -- (v3);
\draw[showedge] (w) -- (v4);
\draw[showedge] (w) -- (v5);

\draw[showedge,magenta] (v1) -- (v2);
\draw[showedge,magenta] (v2) -- (v3);
\draw[showedge,magenta] (v3) -- (v4);
\draw[showedge,magenta] (v4) -- (v5);

\node[showvtx, fill=black, label=left:$w$] at (w) {};

\node[showvtx, fill=magenta, label=above:$v_1$] at (v1) {};
\node[showvtx, fill=magenta, label=right:$v_2$] at (v2) {};
\node[showvtx, fill=magenta, label=right:$v_3$] at (v3) {};
\node[showvtx, fill=magenta, label=right:$v_4$] at (v4) {};
\node[showvtx, fill=magenta, label=below:$v_5$] at (v5) {};
\end{tikzpicture}

\medskip
\textbf{Pinned fan \(F_{1,5}\)}

\[
\dim(E)>\frac{d+5}{2}
\]

\vspace{0.4em}
\end{minipage}
&
\begin{minipage}[t]{0.43\linewidth}
\centering
\vspace{0.4em}

\begin{tikzpicture}[scale=1.08, every node/.style={font=\scriptsize}]
\tikzset{
  showvtx/.style = {circle, fill=black, inner sep=1.7pt},
  showpin/.style = {circle, draw=magenta!70!black, fill=magenta, inner sep=1.9pt},
  showedge/.style = {line width=0.75pt}
}

\coordinate (w) at (0,0);

\foreach \i in {1,2,...,6} {
  \coordinate (v\i) at (360*\i/6+120:1);
  \draw[showedge] (w) -- (v\i);
}

\draw[showedge] (v1) -- (v2) -- (v3) -- (v4) -- (v5) -- (v6) -- (v1);

\draw[showedge,magenta] (w)--(v1);

\node[showpin, label=below:$y$] at (w) {};
\node[showpin] at (v1) {};
\foreach \i in {2,...,6} {
  \node[showvtx] at (v\i) {};
}

\node[left] at (v1) {$x_k$};
\node[left] at (v2) {$x_1$};
\node[left] at (v6) {$x_{k-1}$};
\end{tikzpicture}

\medskip
\textbf{Pinned wheel \(W_6\)}

\[
\dim(E)>\frac{d+3}{2}
\]

\vspace{0.4em}
\end{minipage}

\\
\hline

\begin{minipage}[t]{0.43\linewidth}
\centering
\vspace{0.4em}

\begin{tabular}{@{}c@{\hspace{0.15cm}}c@{}}
\begin{tikzpicture}[scale=0.42, every node/.style={font=\scriptsize}]
\tikzset{
  fishv/.style = {circle, fill=black, inner sep=2.1pt},
  fishpin/.style = {circle, draw=magenta!70!black, fill=magenta, inner sep=2.1pt},
  showedge/.style = {line width=0.8pt}
}

\coordinate (x3) at (0,0);
\coordinate (x1) at (2,1.2);
\coordinate (x2) at (2,-1.2);
\coordinate (x6) at (-2,1.2);
\coordinate (x4) at (-2,-1.2);
\coordinate (x5) at (-3.4,0);

\draw[showedge] (x1)--(x2)--(x3)--(x1);
\draw[showedge] (x6)--(x4)--(x5);
\draw[showedge,magenta] (x5)--(x6);
\draw[showedge] (x6)--(x3)--(x4);

\node[fishv, label=right:$v_4$] at (x3) {};
\node[fishv, label=above:$v_5$] at (x1) {};
\node[fishv, label=below:$v_6$] at (x2) {};
\node[fishpin,label=above:$v_2$] at (x6) {};
\node[fishv, label=below:$v_3$] at (x4) {};
\node[fishpin,label=left:$v_1$] at (x5) {};
\end{tikzpicture}
&
\begin{tikzpicture}[scale=0.42, every node/.style={font=\scriptsize}]
\tikzset{
  fishv/.style = {circle, fill=black, inner sep=2.1pt},
  fishpin/.style = {circle, draw=magenta!70!black, fill=magenta, inner sep=2.1pt},
  showedge/.style = {line width=0.8pt}
}

\coordinate (x3) at (0,0);
\coordinate (x1) at (2,1.2);
\coordinate (x2) at (2,-1.2);
\coordinate (x6) at (-2,1.2);
\coordinate (x4) at (-2,-1.2);
\coordinate (x5) at (-3.4,0);

\draw[showedge] (x1)--(x2)--(x3)--(x1);
\draw[showedge,magenta] (x2)--(x1);

\draw[showedge] (x6)--(x4)--(x5);
\draw[showedge,magenta] (x5)--(x6);

\draw[showedge] (x6)--(x3)--(x4);

\node[fishv, label=right:$v_4$] at (x3) {};
\node[fishpin, label=above:$v_5$] at (x1) {};
\node[fishpin, label=below:$v_6$] at (x2) {};
\node[fishpin,label=above:$v_2$] at (x6) {};
\node[fishv, label=below:$v_3$] at (x4) {};
\node[fishpin,label=left:$v_1$] at (x5) {};
\end{tikzpicture}
\end{tabular}

\medskip
\textbf{Pinned fish graph}

\[
\mathcal P=\{v_1,v_2\}:
\quad
\dim(E)>\frac{d+2}{2}
\]

\[
\mathcal P=\{v_1,v_2,v_5,v_6\}:
\quad
\dim(E)>\frac{d+3}{2}
\]

\vspace{0.4em}
\end{minipage}
&
\begin{minipage}[t]{0.43\linewidth}
\centering
\vspace{0.4em}

\begin{tikzpicture}[scale=0.72, every node/.style={font=\scriptsize}]
\foreach \i in {1,2,3,4} {
    \coordinate (v\i) at (90*\i+45:1);
    \coordinate (w\i) at (90*\i+90:2.3); 
}

\draw[line width=0.75pt] (w1) -- (v2) -- (w2) -- (v3) -- (w3) -- (v4) -- (w4) -- (v1);
\draw[line width=0.75pt] (v1) -- (v2) -- (v3) -- (v4) -- (v1);
\draw[line width=0.75pt] (w1) -- (w2) -- (w3) -- (w4) -- (w1);

\draw[magenta,line width=0.75] (v1)--(w1);
\foreach \i in {2,3,4} {
    \fill[black] (v\i) circle (3pt);
    \fill[black] (w\i) circle (3pt);
}
\fill[magenta] (v1) circle (3pt);
\fill[magenta] (w1) circle (3pt);

\node[above left, color=magenta]  at (v1) {$v_1$};
\node[below left]  at (v2) {$v_2$};
\node[below right] at (v3) {$v_3$};
\node[above right] at (v4) {$v_4$};

\node[left, color=magenta]  at (w1) {$w_1$};
\node[below] at (w2) {$w_2$};
\node[right] at (w3) {$w_3$};
\node[above] at (w4) {$w_4$};
\end{tikzpicture}

\medskip
\textbf{Circumscribed flower \(CF_4\)}

\[
\dim(E)>\frac{d+4}{2}
\]

\vspace{0.4em}
\end{minipage}

\\
\hline
\end{tabular}

\caption{Examples of some of the pinned \(2\)-simplicial complexes we can handle. Magenta vertices indicate the pinned vertex set \(\mathcal P\). The displayed dimensional conditions are sufficient to guarantee positive Lebesgue measure of the corresponding pinned area vector set $(A_{\Delta})^G_{pins}(E)$.}
\label{fig:pinned-simplicial-showcase}
\endgroup
\end{figure}

Due to interest in the original base configuration of a doubly pinned triangle, we record our results for that configuration. While Shmerkin and Yavicoli obtained a dimensional threshold of $1$ in $\mathbb{R}^2$ for a doubly pinned triangle and a slicing argument using their result obtains the threshold $2$ in $\mathbb{R}^3$ we obtain the best threshold known in $\mathbb{R}^d$, $d\geq 4$. We further record our result when $d=3$, as in that case we obtain a quantification on the number of good pins, which is not clear when using the slicing method. This result is a particular case of Theorem \ref{thm: k pinned fans}. In the statement below $\mu_{E_i}$ represents the restriction of $\mu$ to $E_i$ (see Definition \ref{def: restrictedmeasure}).

\begin{cor} 
   Suppose $d\geq 3$. Let $E\subset \R^d$ be a compact set with $\dim(E)>\frac{d+2}{2}$. Take $\mu$ an $s$-Frostman measure on $E$, with $s>\frac{d+2}{2}$. 
    Let $E_1,E_2,E_3$ be pairwise separated compact subsets of $E$ with $\mu(E_i)>0$ for all $i=1,2,3$. Then, for     $\mu_{E_1}\times\mu_{E_2}$ almost every 
$(x_1,x_2)\in E_1\times E_2$, one has
    \[
    \mathcal{L}^{1}((A_{\Delta})_{x_1,x_2}(E_3):=\{\A(x_1,x_2,x_3)\colon x_3\in E_3\})>0.
    \]
\end{cor}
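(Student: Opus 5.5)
The plan is to deduce the corollary from a doubly pinned two-distance result, that is, a $2$-star result, and to transfer it to areas by a Jacobian (Heron) argument. With $x_1,x_2$ fixed, write $a=|x_1-x_2|$ and consider the map
\[
x_3\mapsto \big(|x_1-x_3|,|x_2-x_3|\big)=(r,t).
\]
By Heron's formula,
\[
16\,\A(x_1,x_2,x_3)^2 = 2a^2r^2+2a^2t^2+2r^2t^2-a^4-r^4-t^4 =: F_a(r,t).
\]
So the pinned area set is the image of the pinned two-distance set $\Delta_{x_1,x_2}(E_3)=\{(|x_1-x_3|,|x_2-x_3|): x_3\in E_3\}\subset\R^2$ under the polynomial map $(r,t)\mapsto \sqrt{F_a(r,t)}/4$.

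\textbf{Step 1 (doubly pinned $2$-stars).} I will use the $k$-star result with $k=2$, as extracted in \cite{BFOPRpaperI} from \cite{IPPS22}. It says: if $s>\frac{d+2}{2}$ and $E_1,E_2,E_3$ are separated with positive $\mu$ measure, then for $\mu_{E_1}\times\mu_{E_2}$-a.e.\ $(x_1,x_2)$ the pushforward of $\mu_{E_3}$ under $x_3\mapsto(|x_1-x_3|,|x_2-x_3|)$ is absolutely continuous with an $L^2$ density. In particular $\mathcal{L}^2(\Delta_{x_1,x_2}(E_3))>0$. This is the quantified-pin statement the corollary needs, and it is exactly the $2$-degenerate threshold $\frac{d+k}{2}$.

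\textbf{Step 2 (transfer).} Fix a good pair $(x_1,x_2)$. Separation gives $a>0$. Let $\nu$ be the pushforward measure on $\R^2$, with density $f\in L^1$, and suppose the area set were $\mathcal{L}^1$-null. Then $\nu$ would be supported on $\Phi^{-1}(N)$ for some null set $N$, where $\Phi(r,t)=F_a(r,t)$. Compute
\[
\nabla F_a = \big(4r(a^2+t^2-r^2),\; 4t(a^2+r^2-t^2)\big).
\]
Since $r,t>0$ by separation, this vanishes only if $a^2+t^2=r^2$ and $a^2+r^2=t^2$, which forces $a=0$. So $\nabla\Phi\neq0$ on the relevant region. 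By the coarea formula, or the implicit function theorem applied locally, the preimage of a null set under a submersion $\R^2\to\R$ is $\mathcal{L}^2$-null. This contradicts $\nu(\Phi^{-1}(N))=\nu(\R^2)>0$. Hence $\{F_a(r,t)\}$ has positive measure. Taking the square root, which is locally bi-Lipschitz away from $0$, gives positive measure of the areas after discarding the null-measure set of degenerate triangles where $F_a=0$. This is the single-triangle instance of the edge area transferal: the selected edge is $x_1x_3$, and the pinned edge $x_1x_2$ is not selected.

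\textbf{Main obstacle.} I expect the main obstacle to be Step 1. One has to confirm that the $k$-star theorem really provides absolute continuity for almost every pair of pins, with a quantified set of good pins, and not merely for some pins or for one pin. If only a single-pin version is available, I would prove the $2$-pin version directly. To do that I would bound
\[
\iint \|\nu_{x_1,x_2}\|_{L^2}^2\,d\mu(x_1)\,d\mu(x_2)
\]
by unfolding it into an integral over $x_3,x_3'$ of products of $\sigma_r$-type spherical averages. I would then control this with the decay estimate on spherical averages (Mattila-type integrals) at $s>\frac{d+2}{2}$, using $d\geq3$ so that the two spheres intersect transversally.
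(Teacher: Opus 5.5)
Your proposal is correct and follows essentially the paper's route: the paper obtains this corollary as the case $k=2$ of Theorem \ref{thm: k pinned fans}, which combines the almost-every-pair $2$-star result of Theorem \ref{thm:k starinIPPS} with a Heron-formula Jacobian transfer. That theorem already supplies the pushforward in $L^2$ for $\mu_{E_1}\times\mu_{E_2}$-almost every pair of pins, so your ``main obstacle'' is not an issue. Your transfer step is a slight variant of the paper's: you use absolute continuity together with the non-vanishing of $\nabla F_a$ (for $a,r,t>0$) and coarea, while the paper uses the triangular map $(t_1,t_2)\mapsto(\varphi_{s_1}(t_1,t_2),t_2)$, excises its null critical variety, and then projects to the first coordinate.
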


The dimensional threshold $1$ of Shmerkin and Yavicoli in $\mathbb{R}^2$ is sharp, as can be seen by considering a line. The same sharpness example works in higher dimensions, but the gap between the example and our results grows with the dimension. Recently, Gaitan Montejo and the fourth listed author \cite{GMP26} had some success in extending the results of Shmerkin and Yavicoli for simplices to intermediate dimensions, but no improvement was obtained in the particular case of triangles.

There are some interesting $2$-simplicial configurations which are not covered by the two results above. The banana graph in Figure \ref{fig:banana} is $3$-degenerate, so positive measure of the edge set $\Delta^G(E)$ holds if $\dim(E)>\frac{d+3}{2}$. Say we include all the $7$ triangles in the picture as part of its  $2$-simplicial structure that defines the area set $(A_{\Delta})^G(E)$, then we have the property that any injective map from the set of triangles $\mathcal{E}_2$ to the set of edges $\mathcal{E}_1$ will have at least one triangle with all its three edges selected, so Theorem \ref{thm: mainJacobianthm} does not apply. 

\begin{figure}
\begin{tikzpicture}[scale=1.5]

\coordinate (A) at (0,0.3);        
\coordinate (B) at (1,0.3);        
\coordinate (C) at (0.6,0.6);    
\coordinate (F) at (2,0.3);        
\coordinate (G) at (3,0.3);        
\coordinate (H) at (2.4,0.6);    
\coordinate (D) at (1.5,1.6);    
\coordinate (E) at (1.5,-0.9);   


\draw[thick] (F) -- (G) -- (H) -- cycle;
\draw[thick] (F) -- (D); \draw[thick] (G) -- (D); \draw[thick] (H) -- (D);
\draw[thick] (F) -- (E); \draw[thick] (G) -- (E); \draw[thick] (H) -- (E);

\fill[black] (D) circle (1.5pt);   
\fill[black] (E) circle (1.5pt);   
\fill[black] (F) circle (1.5pt);    
\fill[black] (G) circle (1.5pt);    
\fill[black] (H) circle (1.5pt);    

\node at (1.5, 1.75) {$v_1$};
\node at (1.5, -1.05) {$v_2$};
\node at (1.8, 0.3) {$v_3$};
\node at (3.2, 0.3) {$v_4$};
\node at (2.2, 0.65) {$v_5$};

\end{tikzpicture}
\caption{The banana graph.}
\label{fig:banana}
\end{figure}

We can, however, prove that length-area transferal holds for this graph by analyzing the Jacobian of the map that sends edges to areas directly. For more details, see Proposition \ref{prop:banana-transferal} where we compute and analyze the relevant Jacobian. Note that if we don't include the $2$-simplex $(v_3,v_4,v_5)$ in $\mathcal{E}_2$, and only compute the areas of the $6$ triangles that comprise the surface of the banana, then the hypothesis of Theorem \ref{thm: mainJacobianthm} is satisfied using a convenient edge choice function, so then the associated area set has positive measure in $\R^6$ if $\dim(E)>\frac{d+3}{2}$.

 In all the examples discussed above, the passage from positive-measure
results for $\Delta^G(E)$ to the corresponding results for
$(A_{\Delta})^G(E)$, and similarly for their pinned counterparts, relied on the following input: if $G$ is a pinned or unpinned graph which is
$k$-admissible in the sense introduced in our previous work \cite{BFOPRpaperI}, then for any $E\subset \R^d$ satisfying the dimensional
threshold
\[
\dim_{\mathcal H}(E)>\frac{d+k}{2},
\]
$\Delta^G(E)$ has positive Lebesgue measure. Such a threshold only gives information when $1\leq k<d$, and it is intrinsically related to the threshold obtained by \cite{IPPS22} for $k$-stars, which we will recall in Theorem \ref{thm:k starinIPPS}.

In an upcoming work of the first and third authors, in collaboration with
M. Pasquariello \cite{BOP2026}, the best-known thresholds for $k$-stars are improved for $1\leq k<d$.
More precisely, the threshold $\frac{d+k}{2}$ can be replaced by
\[
\alpha_{+}(d,k)
:=
\frac{d^2+dk+k}{2d+1}
=
\frac{d+k-1}{2}
+\frac14
+\frac{2k+1}{4(2d+1)}.
\]
These improved thresholds for $k$-stars immediately upgrade the dimensional
thresholds in the results above.

The Jacobian method does not necessarily have to rely on the threshold
coming from the $k$-admissibility of the graph. Rather, once the
length-area transfer has been verified for a given simplicial complex, one
can use any available positive-measure threshold for $\Delta^G(E)$. A useful example where checking $k$-admissibility is not helpful is given by
the complete graph $K_{d+1}$ (which one also can call a $d$-dimensional simplex in \cite{GILP15}), with vertices sampled from
$E\subset \mathbb{R}^d$, in the cases $d=3,4$ (for which we will check the length-area transferal holds). Indeed, this corresponds to
the $d$-degenerate regime, and no positive-measure results for $d$-stars in
$\mathbb{R}^d$ are possible with thresholds smaller than $d$ \cite[Remark 7]{BFOPRpaperI}.
Thus, the results of \cite{BFOPRpaperI} do not provide information for these configurations, but one can use the thresholds for $k$-dimensional simplices from \cite{GILP15} instead. We will provide more details on this example in Section \ref{sec: jacobianmethod}.

We don't know of any examples of simplicial complexes where there is an injective way of picking an edge in each triangle and length-area transferal fails. That motivates us to conjecture that an injective edge function should be sufficient for length-area transferal (see definition \ref{def:triangleedgechoice} for the definition of triangle edge choice function).

\begin{conj}\label{conj:edgeareatransferal}
    Let $G=(\mathcal{V},\mathcal{E})$ be a simplicial complex. Then length-area transferal holds for $G$ if and only if there exists an injective triangle edge choice function.
\end{conj}

Interestingly, it turns out that the methods of this section are intimately related to structural rigidity theory, with open conjectures from that field (see \cite{LNPR}) also appearing in this setting. Our Theorem \ref{thm: mainJacobianthm} is merely a sufficient condition, but it is robust enough to handle a wide range of common $2$-dimensional simplicial complexes.

When the Jacobian method is combined with distance-graph results coming from graph degeneracy, it embodies the following principle: if a complicated area configuration can be expressed through a simpler distance configuration, then positive-measure results for the latter can be transferred to the former. Once length-area transferal has been verified for a given simplicial complex, however, the method is not tied to thresholds coming from $k$-admissibility. One may instead use any available Falconer-type positive-measure result for the underlying distance graph.

At the same time, the strength of the Jacobian method is limited by the best known distance or simplex thresholds for the relevant graph. In the plane, for instance, one can transfer the $8/5$ threshold for positive measure of unpinned triangles in the plane coming from the group-action method in \cite{GILP15}, but this does not reach the sharp threshold $1$ known for doubly pinned triangle areas by the theorem of Shmerkin and Yavicoli. For this reason, we now turn to a different set of arguments in the planar setting, based on projection theory and Fubini-type gluing. These arguments recover sharp planar thresholds for triangle-area configurations and provide model results for hypergraph area problems beyond the range of the Jacobian method.

\subsection{Projection methods in the plane}

In \cite{SY25}, Shmerkin and Yavicoli showed that if $E\subseteq \R^2$ is a compact set with $\dim(E)>1$ then there is a pair $(x_1,x_2)\in E\times E$ such that $$(A_\Delta)_{x_1,x_2}(E):= \{\A(x_1,x_2,x_3)\colon x_3\in E\}$$ has positive Lebesgue measure. Their proof relies on the geometric fact that, once
two vertices of a triangle are fixed, the area is proportional to the height
over the pinned edge. Thus, doubly pinned triangle areas can be studied
through one-dimensional orthogonal projections in directions perpendicular to
pinned edges. Their proof combines Marstrand's slicing theorem and positive measure of orthogonal projections for almost every direction.

As our first result, we refine their result by guaranteeing an abundance of pairs of pins.

\begin{thm}\label{thm:strengthenedSY}
    Let $E\subset \R^2$, with $\dim(E)>1$ and let $\mu$ be an $\alpha$-Frostman measure on $E$ for $\alpha>1$. Take disjoint compact subsets $E_1,E_2,E_3$ with $\mu(E_i)>0$, $i=1,2,3$. Then, for $\mu_{E_1}\times \mu_{E_2}$ a.e. pair $(x_1,x_2)\in E_1\times E_2$ one has 
    $$\mathcal{L}^{1}\left(\{\A(x_1,x_2,x_3)\colon x_3\in E_3\}\right)>0$$
\end{thm}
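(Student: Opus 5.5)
Fix the pair $(x_1,x_2)$ and write $\theta(x_1,x_2)=\frac{x_2-x_1}{|x_2-x_1|}\in S^1$ for the direction of the pinned edge. Since
$$\A(x_1,x_2,x_3)=\tfrac12|x_2-x_1|\,\bigl|\pi_{\theta^\perp}(x_3-x_1)\bigr|,$$
where $\pi_{\theta^\perp}(y)=y\cdot\theta^\perp$ is the orthogonal projection onto the line spanned by $\theta^\perp$, the set $\{\A(x_1,x_2,x_3)\colon x_3\in E_3\}$ is the image of $\pi_{\theta^\perp}(E_3)$ under the map $t\mapsto \tfrac12|x_2-x_1|\,|t-\pi_{\theta^\perp}(x_1)|$. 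This map is Lipschitz and at most two-to-one, with nonzero scaling because $x_1\neq x_2$ ($E_1,E_2$ are disjoint). Therefore positivity of $\mathcal{L}^1$ of the area set is equivalent to $\mathcal{L}^1(\pi_{\theta(x_1,x_2)^\perp}(E_3))>0$. It suffices to show that this holds for $\mu_{E_1}\times\mu_{E_2}$-a.e.\ pair. Note that $x_1$ enters only through the direction $\theta$; there is no Marstrand slicing or choice of a particular pin.

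First apply Marstrand's projection theorem, in its Kaufman energy form, to the measure $\mu_{E_3}$. Since $\mu$ is $\alpha$-Frostman with $\alpha>1$, $\mu_{E_3}$ has finite $1$-energy (indeed finite $s$-energy for $1<s<\alpha$). Hence for $\sigma$-a.e.\ direction $e\in S^1$ the pushforward $(\pi_e)_\#\mu_{E_3}$ is absolutely continuous with $L^2$ density, so $\mathcal{L}^1(\pi_e(E_3))>0$. Let $B\subset S^1$ be the exceptional set of directions; then $\sigma(B)=0$. It remains to show that
$$\mu_{E_1}\times\mu_{E_2}\bigl(\{(x_1,x_2)\colon \theta(x_1,x_2)^\perp\in B\}\bigr)=0.$$

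This is the key step: the pushforward of $\mu_{E_1}\times\mu_{E_2}$ under $(x_1,x_2)\mapsto\theta(x_1,x_2)$ must not charge Lebesgue-null sets of directions. Use Fubini. For fixed $x_1\in E_1$, the map $x_2\mapsto \theta(x_1,x_2)$ is the radial projection centered at $x_1$. Because $E_1$ and $E_2$ are disjoint compact sets, $|x_2-x_1|\ge c>0$ on $E_2$. So this radial projection is locally Lipschitz-equivalent to an orthogonal-type projection, and its pushforward of $\mu_{E_2}$ satisfies
$$\int\!\!\int |\theta(x_1,x)-\theta(x_1,y)|^{-1}\,d\mu_{E_2}(x)\,d\mu_{E_2}(y)\ \lesssim_c\ \int\!\!\int\frac{d\mu_{E_2}(x)\,d\mu_{E_2}(y)}{\operatorname{dist}(y,\ell_{x_1,x})}.$$
Here $\ell_{x_1,x}$ is the line through $x_1$ and $x$, and the bound holds because the angle between $x_1x$ and $x_1y$ is comparable to $\operatorname{dist}(y,\ell_{x_1,x})$ divided by a bounded-below distance.

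I expect controlling this integral to be the main obstacle. I would try the simplest route first. Integrate the right-hand side in $x_1$ against $\mu_{E_1}$ as well, and use the $\alpha$-Frostman bound with $\alpha>1$ to show that the $\mu$-measure of the $r$-neighbourhood of a line, restricted to a bounded set, is $\lesssim r^{\alpha-1}$. This makes $\operatorname{dist}(y,\ell)^{-1}$ integrable in $y$ uniformly over lines, and also in $x_1$ for fixed $x,y$ with $|x-y|$ not too small. The near-diagonal region $|x-y|$ small is handled by $1<\alpha$ finiteness of $I_1(\mu_{E_2})$.

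This yields, for $\mu_{E_1}$-a.e.\ $x_1$, that the radial pushforward of $\mu_{E_2}$ has finite $1$-energy on $S^1$, hence is absolutely continuous with respect to $\sigma$. An absolutely continuous measure gives zero mass to the null set $B^\perp$, so for $\mu_{E_1}$-a.e.\ $x_1$ the set of $x_2$ with $\theta^\perp\in B$ is $\mu_{E_2}$-null. Fubini then finishes the proof.

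If the uniform line-neighbourhood estimate proves awkward, the fallback is a known radial projection theorem. For $\dim E_2>1$, the radial projection of $\mu_{E_2}$ from every point outside its support (or from all but a set of dimension $\le 2-\dim E_2<1$) is absolutely continuous. That exceptional set is $\mu_{E_1}$-null because $\mu$ is $\alpha$-Frostman with $\alpha>1$.
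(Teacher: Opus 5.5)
Your architecture matches the paper's. You reduce to the orthogonal projection of $E_3$ perpendicular to $x_2-x_1$, show the bad directions $B$ form an $\mathcal{H}^1$-null set, show $(\pi_{x_1})_*\mu_{E_2}\ll\mathcal{H}^1|_{S^1}$ for $\mu_{E_1}$-a.e.\ $x_1$, and finish by Fubini. Using Kaufman's bound $\int_{S^1}\|(\pi_e)_\#\mu_{E_3}\|_{L^2}^2\,d\sigma(e)\lesssim I_1(\mu_{E_3})$ for the second step is fine. It is slightly simpler than the paper's appeal to Mattila's Sobolev-dimension exceptional set estimate, since only $\mathcal{H}^1(B)=0$ is used.

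\textbf{The gap is your primary route for the radial step: it aims at something false.} The Riesz $1$-energy $\iint|u-v|^{-1}\,d\nu(u)\,d\nu(v)$ of \emph{every} nonzero finite measure $\nu$ on the one-dimensional circle is infinite. Indeed, differentiating $\nu$ against arclength gives $\liminf_{r\to 0}\nu(B(u,r))/r>0$ for $\nu$-a.e.\ $u$, hence $\int|u-v|^{-1}\,d\nu(v)=\int_0^\infty \nu(B(u,r))\,r^{-2}\,dr=\infty$. So ``finite $1$-energy on $S^1$, hence absolutely continuous'' can never be established.

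\begin{itemize}
\item Consistently with this, your line-neighbourhood bound $\mu(N_r(\ell))\lesssim r^{\alpha-1}$ gives $\int\operatorname{dist}(y,\ell)^{-1}\,d\mu(y)=\int_0^\infty\mu(N_r(\ell))\,r^{-2}\,dr<\infty$ only when $\alpha>2$, which is impossible in the plane.
\item The $1$-energy controls orthogonal projections because it is computed in $\mathbb{R}^2$. On $S^1$, absolute continuity needs an $L^2$ or $L^p$ bound, i.e.\ $\int|\widehat{\nu}|^2<\infty$ in the paper's $\mathcal{I}_1$ normalization, not the Riesz $1$-energy.
\item Bounding $\int\|(\pi_{x_1})_*\mu_{E_2}\|_{L^2}^2\,d\mu_{E_1}(x_1)$ by Frostman counting also fails. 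The centers $x_1$ seeing $x,y$ under an angle $<\delta$ lie in a $\delta/|x-y|$-neighbourhood of the line through $x,y$. This yields only the bound $\delta^{-1}(\delta/|x-y|)^{\alpha-1}$, which blows up as $\delta\to0$ since $\alpha<2$.
\end{itemize}

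Elementary arguments of this kind only reach $\alpha>3/2$. For example, the cone from $x_2$ over a direction set of dimension $\le 2-\alpha$ has dimension $\le 3-\alpha$, which is $\mu_{E_1}$-null only if $3-\alpha<\alpha$.

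Your fallback is what actually proves the statement, and it is exactly the paper's key input: Orponen's radial projection theorem. The paper uses it in the integrated form $\int\|(\pi_{x_1})_*\mu_2\|_{L^p(S^1)}^p\,d\mu_1(x_1)<\infty$ for some $p=p(\alpha)>1$, which gives absolute continuity for $\mu_{E_1}$-a.e.\ $x_1$ directly.

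However, the first clause of your fallback is false: absolute continuity does not hold from \emph{every} point outside the support. Let $K\subset S^1$ be a middle-thirds Cantor set of angles. Then $\{r\omega\colon 1\le r\le 2,\ \omega\in K\}$ carries a $(1+\log 2/\log 3)$-Frostman measure. Yet its radial projection from the origin is supported on $K$, which has length zero. Only the exceptional-set version is true: for any $s\in(1,\alpha)$, the exceptional centers have dimension $\le 2-s<1<\alpha$, so they are $\mu_{E_1}$-null. This is a deep theorem, not a consequence of the energy computation you sketch. With it cited, your argument is complete and coincides with the paper's.
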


The result above follows from an even stronger triangle-area building block, yielding $L^2$ regularity for the pushforward of the Frostman measure in $E$ via orthogonal projections in relevant directions. That is stated precisely in Theorem \ref{thm:trianglebuildingblockintheplane}, and the proof will require two main ingredients: $L^2$ regularity of orthogonal projection estimates, and the key new ingredient of Orponen's $L^p$ estimates for radial projections (recall its statement in Theorem \ref{thm:OrponenL^pprojection}).
In our previous paper \cite{BFOPRpaperI}, where $k$-stars served as building blocks for more general pinned distance graph configurations, an analogous use of $L^2$-type strengthening was also needed. The $L^2$ strength of the $k$-star building blocks was important to prune sets during the induction process without losing the positive measure properties of the $k$-stars that came earlier in the graph-building process.

Using such a building block, one can get to results for triangle chains sharing vertices or edges. More precisely, let $TC_k^{edge}$ stand for the $2$-simplicial complex associated with triangle chains of $k$-triangles attached by edges. See Figure \ref{fig:three-triangle-chain} for the case $k=3$.

\begin{figure}[h!]
\centering
\begin{tikzpicture}[
  v/.style = {circle, fill=black, inner sep=1.8pt},
  mv/.style = {circle, draw=magenta!70!black, fill=magenta, inner sep=2pt}
]
  \coordinate (v1) at (-2.4,0);
  \coordinate (v3) at (0,0);
  \coordinate (v5) at (2.4,0);
  \coordinate (v2) at (-1.2,1.5);
  \coordinate (v4) at (1.2,1.5);

  \draw[line width=0.8pt] (v1)--(v2)--(v3)--cycle;
  \draw[line width=0.8pt] (v2)--(v3)--(v4)--cycle;
  \draw[line width=0.8pt] (v3)--(v4)--(v5)--cycle;

  \node[mv, label=left:$v_1$] at (v1) {};
  \node[mv, label=above:$v_2$] at (v2) {};
  \node[v, label=below:$v_3$] at (v3) {};
  \node[v, label=above:$v_4$] at (v4) {};
  \node[v, label=right:$v_5$] at (v5) {};
\end{tikzpicture}
\caption{An edge-attached \(3\)-triangle chain $TC_3^{edge}$ with pins in vertices $v_1$ and $v_2$.}
\label{fig:three-triangle-chain}
\end{figure}

Given $E\subset \R^d$ and  $x_1, x_2, \dots ,x_{k+2}\in E$, define the area computing function for $k$-triangle chains
    $$A^{TC_k^{edge}}(x_1,x_2,\hdots, x_{k+2}) := \left( \A(x_1,x_2,x_3), \A(x_{2},x_{3},x_{4}), \hdots, \A(x_{k},x_{k+1},x_{k+2}) \right).$$
For fixed $x_1,x_2\in E$, also consider the pinned variant
    $$(A^{TC_k^{edge}})_{x_1,x_2}(x_3,\hdots, x_{k+2}):=A^{TC_k^{edge}}(x_1,x_2,\hdots, x_{k+2}).$$
    
\begin{thm}[Edge-attached triangle chains]\label{cor:edgetrianglechains}
 Let $k\geq 1$. For a compact set $E\subseteq \R^2$, consider the area set associated with the edge attached $k$-triangle chains in $E$ with the initial edge pinned, namely
    $$(A_{\Delta})^{TC^{edge}_k}_{x_1,x_2}(E)= \{(A^{TC_k^{edge}})_{x_1,x_2}(x_3,x_4,\dots, x_{k+2})\colon x_i\in E\text{ distinct },\,3\leq i\leq k+2\}.$$
If $\dim(E)>1$, then there exist $x_1\neq x_2\in E$ such that 
$\mathcal{L}^k\left((A_{\Delta})^{TC_k^{edge}}_{x_1,x_2}(E) \right)>0.$
\end{thm}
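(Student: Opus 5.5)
We argue by induction on $k$, using as building block the quantitative planar result, Theorem \ref{thm:strengthenedSY} (more precisely its $L^2$ strengthening, Theorem \ref{thm:trianglebuildingblockintheplane}). Fix an $\alpha$-Frostman measure $\mu$ on $E$ with $\alpha>1$, and pairwise disjoint compact sets $E_1,\dots,E_{k+2}\subset E$ with $\mu(E_i)>0$. Choosing the $E_i$ disjoint makes all points distinct automatically. We will prove the stronger, measure-theoretic statement that for $\mu_{E_1}\times\mu_{E_2}$-a.e. pair $(x_1,x_2)$ the set of area vectors has positive $\mathcal{L}^k$ measure. The structural fact driving everything is that the $j$-th area $\A(x_j,x_{j+1},x_{j+2})$ depends on the last new point $x_{j+2}$ only through a doubly pinned area with pins $x_j,x_{j+1}$. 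Thus the chain is a Fubini-type tower of doubly pinned triangle problems.

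The argument proceeds by showing that for $\mu_{E_1}\times\mu_{E_2}$-a.e. $(x_1,x_2)$, the pushforward $\nu^{(k)}_{x_1,x_2}$ of $\mu_{E_3}\times\cdots\times\mu_{E_{k+2}}$ under $(x_3,\dots,x_{k+2})\mapsto A^{TC_k^{edge}}$ is absolutely continuous, ideally with $L^2$ (or at least $L^1$) density. Positive Lebesgue measure of the image then follows since this measure is nonzero and supported on the image.

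For $k=1$ this is exactly the building block: for a.e. pin pair $(x_1,x_2)$, the measure $\mu_{E_3}$ pushed by $x_3\mapsto \A(x_1,x_2,x_3)$ has an $L^2$ density $f_{x_1,x_2}$. For the inductive step, disintegrate along the last coordinate. For a test set $B\subset\R^{k-1}$ and $N\subset\R$ with $|N|=0$,
\[
\nu^{(k)}_{x_1,x_2}(B\times N)=\int \mathbf 1_B\big(A^{TC_{k-1}^{edge}}(x_1,\dots,x_{k+1})\big)\,\mu_{E_{k+2}}\{x_{k+2}:\A(x_k,x_{k+1},x_{k+2})\in N\}\,d\mu_{E_3}\cdots d\mu_{E_{k+1}}.
\]
The inner factor vanishes whenever $(x_k,x_{k+1})$ is a good pair for the building block with third set $E_{k+2}$. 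By the building block, the bad pairs form a $\mu_{E_k}\times\mu_{E_{k+1}}$-null set $\mathcal B$. Absolute continuity of the full vector then follows by combining this with the inductive hypothesis: the first $k-1$ coordinates are absolutely continuous, and conditionally on them the last coordinate is too. This is made precise via the conditional densities $f_{x_k,x_{k+1}}$: the density of $\nu^{(k)}$ is obtained by integrating $f_{x_k,x_{k+1}}(t_k)$ against the disintegration of the lower-level measure. Its integrability follows from Fubini, since $\int\|f_{x_k,x_{k+1}}\|_{L^1}=\mu(E_{k+2})\cdot\mu(E_k)\mu(E_{k+1})$-type bounds hold once we restrict to pairs where $\|f\|_{L^2}\le M$.

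The main obstacle is the interaction between pins and previously used points. In the inductive step the pair $(x_k,x_{k+1})$ is not drawn from $\mu_{E_k}\times\mu_{E_{k+1}}$ freely when $k\ge 3$, and for $k=2$ one of the two points is the fixed pin $x_2$. We need the bad set $\mathcal B$ to remain null after fixing $x_1,x_2$ and integrating only over the later points. For $k\geq3$ this is Fubini directly. For the step involving the pin $x_2$, we need that for $\mu_{E_2}$-a.e. $x_2$, the set of $x_3$ with $(x_2,x_3)\in\mathcal B$ is $\mu_{E_3}$-null, which again follows from Fubini applied to the null set $\mathcal B\subset E_2\times E_3$. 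Intersecting countably many such full-measure conditions over $j$ yields a full-measure set of pins $(x_1,x_2)$.

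The delicate point is making the disintegration rigorous. To avoid needing any $L^2$ regularity of the joint lower-level density, I would first prune: restrict to the set of pairs where $\|f_{x_j,x_{j+1}}\|_{L^2}\le M$, which has measure close to full for large $M$. This is exactly the pruning role the $L^2$ building block plays in \cite{BFOPRpaperI}. With the pruning in place, I would verify absolute continuity by testing against null sets as above rather than via density estimates.
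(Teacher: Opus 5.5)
Your argument is correct, but it runs the induction in the opposite direction from the paper and works with measures rather than image sets.

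\textbf{The paper's route.} The paper (Proposition \ref{prop:trianglesbyedges}) peels off the \emph{first} triangle.
\begin{itemize}
\item It applies the inductive hypothesis to the shifted chain on $E_2,\dots,E_{k+2}$. This gives a positive-measure set $E_{23}$ of good pairs $(x_2,x_3)$.
\item It extracts $E_2'\subset E_2$ on which the fibres $E_3'(x_2)$ have $\mu_3$-measure bounded below, and applies the building block to $(E_1,E_2',E_3)$.
\item It then uses Fubini on the image. For $(x_1,x_2)\in G_{12}$, the first area ranges over the positive-measure set $\A_{x_1,x_2}(E_3'(x_2))$, because the projection of $\mu_3|_{E_3'(x_2)}$ is absolutely continuous. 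Every slice over that set contains a pinned $(k-1)$-chain set of positive measure.
\end{itemize}

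\textbf{Your route.} You keep $(x_1,x_2)$ fixed and append the \emph{last} triangle. You prove absolute continuity of $\nu^{(k)}_{x_1,x_2}$ by conditioning on the first $k-1$ areas, and you handle the pin/point interaction by Fubini on the null bad set $\mathcal B$.

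\textbf{What each buys.} Your argument yields more: a full-measure set of pins, and absolute continuity of the natural measure on area vectors. However, it genuinely needs the full-measure form of Theorem \ref{thm:trianglebuildingblockintheplane}, since $\mathcal B$ must be null for the conditional step. The paper proves that full-measure form, so this is available to you. The paper's gluing, by contrast, only needs positive $\mu_1\times\mu_2$-measure of good pairs, which is why Proposition \ref{prop:trianglesbyedges} is stated under that weaker hypothesis. Correspondingly, it only concludes positive measure of good pins.

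\textbf{Two points to tighten.}

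First, testing on product sets $B\times N$, together with absolute continuity of the first $k-1$ coordinates, does not give absolute continuity on $\R^k$. Lebesgue measure pushed onto the diagonal of $\R^2$ kills every such test set yet is singular. What you need is your displayed identity with $B\times N$ replaced by an arbitrary Borel null set $Z\subset\R^k$:
\begin{itemize}
\item The inner factor becomes $\mu_{E_{k+2}}\{x_{k+2}:\A(x_k,x_{k+1},x_{k+2})\in Z_s\}$, where $s=A^{TC_{k-1}^{edge}}(x_1,\dots,x_{k+1})$ and $Z_s$ is the slice of $Z$ over $s$.
\item This factor vanishes unless $(x_k,x_{k+1})\in\mathcal B$ or $s\in N':=\{s:\mathcal L^1(Z_s)>0\}$.
\item By Fubini $N'$ is Lebesgue-null, hence $\nu^{(k-1)}_{x_1,x_2}$-null by the inductive hypothesis.
\end{itemize}
Your phrase ``conditionally on them'' points at this, but the displayed computation should be the slice version.

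Second, the pruning to $\|f_{x_j,x_{j+1}}\|_{L^2}\le M$ is unnecessary, and it would break the product structure your induction relies on. Only absolute continuity of the one-triangle pushforward is used. Moreover, $\|f_{x_k,x_{k+1}}\|_{L^1}=\mu(E_{k+2})$ holds with no restriction.
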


This result is sharp, as can be seen by considering a line, and improves on a result by Galo and McDonald \cite{GM22}, who obtained a dimensional threshold $\frac{3}{2}$.

It is possible to use the building block in Theorem \ref{thm:strengthenedSY} (without needing the $L^2$ strength in this case) to obtain results for area sets realized in chains of vertex-attached triangles in the plane, namely

\begin{thm}[Vertex-attached triangle chains]\label{cor: vertextrianglechain}
    Let $k\geq 1$. For a compact set $E\subseteq \R^2$ consider the area set associated with the vertex attached $k$-triangle chains in $E$ with first vertex pinned, namely
    $$ (A_{\Delta})_{x_1}^{TC_k^{vtx}}(E):= \lbrace (\textnormal{Area}(x_{2i-1},x_{2i},x_{2i+1}))_{i=1}^{k} \colon x_2,x_3,\dots ,x_{2k+1}\in E \text{ distinct }\rbrace.  $$
    If $\dim(E)>1$ then there exists $x_1\in E$ such that $\mathcal{L}^{k}\left((A_{\Delta})_{x_1}^{TC_k^{vtx}}(E)\right)>0$.
\end{thm}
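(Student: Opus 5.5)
We prove Theorem \ref{cor: vertextrianglechain} by induction on $k$, using only the quantified doubly pinned building block of Theorem \ref{thm:strengthenedSY} together with Fubini.

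\textbf{Setup.} Fix an $\alpha$-Frostman measure $\mu$ on $E$ with $\alpha>1$. Choose $2k+1$ pairwise disjoint compact subsets $E_1,\dots,E_{2k+1}\subset E$ with $\mu(E_j)>0$; this is possible since $\mu$ is non-atomic. Disjointness automatically makes the points $x_j\in E_j$ distinct.

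\textbf{Inductive claim.} We prove the stronger statement that for $\mu_{E_1}$-a.e. $x_1\in E_1$,
\[
\mathcal{L}^k\big(\{(\A(x_{2i-1},x_{2i},x_{2i+1}))_{i=1}^k : x_j\in E_j,\ 2\le j\le 2k+1\}\big)>0 .
\]

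\textbf{Base case $k=1$.} Apply Theorem \ref{thm:strengthenedSY} to $E_1,E_2,E_3$. For $\mu_{E_1}\times\mu_{E_2}$-a.e. $(x_1,x_2)$ the set $\{\A(x_1,x_2,x_3):x_3\in E_3\}$ has positive measure. By Fubini, for $\mu_{E_1}$-a.e. $x_1$ there is a $\mu_{E_2}$-positive (indeed full) set of good $x_2$. Picking one such $x_2$ shows the singly pinned area set contains a set of positive $\mathcal{L}^1$ measure.

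\textbf{Inductive step.} Assume the claim for $k-1$, with the chain starting at vertex $x_3$ and sets $E_3,\dots,E_{2k+1}$. The inductive hypothesis gives a set $G\subset E_3$ of full $\mu_{E_3}$ measure such that for each $x_3\in G$ the $(k-1)$-dimensional set
\[
S(x_3)=\{(\A(x_{2i-1},x_{2i},x_{2i+1}))_{i=2}^k\}
\]
has positive $\mathcal{L}^{k-1}$ measure.

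Now apply Theorem \ref{thm:strengthenedSY} to the triple $E_1,E_2,G'$, where $G'\subset G$ is a compact subset of positive measure (inner regularity). For $\mu_{E_1}\times\mu_{E_2}$-a.e. $(x_1,x_2)$ the set $U=\{\A(x_1,x_2,x_3):x_3\in G'\}$ has positive measure. Fix $x_1$ in the full-measure set produced by Fubini, and fix a good $x_2$.

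The full area set then contains $\bigcup_{x_3\in G'}\{\A(x_1,x_2,x_3)\}\times S(x_3)$. We want to show this union has positive $\mathcal{L}^k$ measure.

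\textbf{Main obstacle: measurability and selection.} The difficulty is that $S(x_3)$ depends on $x_3$, and several $x_3$ can share the same first-coordinate value $a$. We handle this by selection.

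1. The map $x_3\mapsto \A(x_1,x_2,x_3)$ is continuous on the compact set $G'$, with image $U$ of positive measure.
2. Take a Borel (or universally measurable) right inverse $\sigma:U\to G'$ (measurable selection, e.g.\ Jankov–von Neumann).
3. The union then contains $\{(a,s): a\in U,\ s\in S(\sigma(a))\}$.

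To apply Fubini to this set we need measurability of $\{(a,s)\}$ and not just positivity of each fiber. We would therefore strengthen the inductive claim. Since $S(x_3)$ is the image of a compact set under a continuous map with parameter $x_3$, it is analytic. We can therefore replace sets by $\sigma$-compact inner approximations, working with the joint set $\{(x_3,s): s\in S(x_3)\}$, which is analytic in $E_3\times\R^{k-1}$. Its image under $(x_3,s)\mapsto(\A(x_1,x_2,x_3),s)$ is analytic, hence measurable, and its slices over $a\in U$ contain $S(\sigma(a))$, each of positive measure. Fubini then yields positive $\mathcal{L}^k$ measure.

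Finally, this holds for $\mu_{E_1}$-a.e. $x_1$, which closes the induction; the case of general $k$ yields Theorem \ref{cor: vertextrianglechain}.
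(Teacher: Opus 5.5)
Your proof is correct and follows essentially the paper's route. The paper proves the gluing statement (Proposition \ref{prop: trianglesbyvertices}) by the same induction: it prunes $E_3$ to a compact set $K_3$ of good starting vertices, applies the one-triangle case to $E_1,E_2,K_3$, and concludes by Fubini over the first area coordinate. The paper lets $x_2$ vary, using the singly pinned consequence of Theorem \ref{thm:trianglebuildingblockintheplane}, and claims only a positive-measure set of good $x_1$; your choice to also fix $x_2$ via Theorem \ref{thm:strengthenedSY} gives the slightly stronger $\mu_{E_1}$-a.e. conclusion.

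Your measurability detour is unnecessary. Every area set in the argument is a continuous image of a compact set, hence compact. Positivity of each slice only needs, for each fixed $a\in U$, some $x_3\in G'$ with $\A(x_1,x_2,x_3)=a$, so neither a measurable selection nor analytic sets are needed.
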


This result is sharp, as can be seen by considering a line, and improves on a result of Greenleaf, Iosevich and Taylor \cite{GIT25}, where they obtained the dimensional threshold $\frac{5}{3}$ for the stronger conclusion of non-empty interior.

We finally note that we can mix and match connecting triangles on vertices or edges. To showcase that we highlight a particular example that comes from a graph we call the \emph{fish graph}, see Figure \ref{fig:fish}. Here, the relevant set for pinned vertices $x_1, x_2$ is

$$ (A_{\Delta})^{\text{fish}}_{x_1,x_2} = \lbrace (\text{Area}(x_1,x_2,x_3),\text{Area}(x_2,x_3,x_4),\text{Area}(x_4,x_5,x_6)) : x_3,x_4,x_5,x_6\in E \rbrace .$$

\begin{center}
\begin{figure}[h!]
\begin{tikzpicture}[
  v/.style = {circle, fill=black, inner sep=1.8pt},
  mv/.style = {circle, draw=magenta!70!black, fill=magenta, inner sep=2pt}
]
  \coordinate (x3) at (0,0);
  \coordinate (x1) at (2,1.2);
  \coordinate (x2) at (2,-1.2);
  \coordinate (x6) at (-2,1.2);
  \coordinate (x4) at (-2,-1.2);
  \coordinate (x5) at (-3.4,0);

  \draw[line width=0.8pt] (x1)--(x2)--(x3)--(x1);     
  \draw[line width=0.8pt] (x5)--(x6)--(x4)--(x5);     
  \draw[line width=0.8pt] (x6)--(x3)--(x4);           

  \node[v, label=right:$x_4$] at (x3) {};
  \node[v, label=above:$x_5$] at (x1) {};
  \node[v, label=below:$x_6$] at (x2) {};
  \node[mv,label=above:$x_2$] at (x6) {};
  \node[v, label=below:$x_3$] at (x4) {};
  \node[mv,label=left:$x_1$] at (x5) {};
\end{tikzpicture}
\caption{The fish graph with pins at $x_1$ and $x_2$. This example encompasses both cases of triangles that share a common edge and triangles that merely share a single common vertex}
\label{fig:fish}
\end{figure}
\end{center}

This configuration allows us to showcase our methods, including triangles attached at both vertices and edges, as well as both proof methods. We obtain the following theorem, which follows from Theorem \ref{thm: fish} for $d\geq 3$ and whose proof in the case $d=2$ will be given right after Corollaries \ref{cor:edgetrianglechains} and \ref{cor: vertextrianglechain}.

\begin{thm}\label{thm: fishAllD}
Suppose $E\subset\mathbb{R}^d$, $d\geq 2$, is compact with $\dim(E)>1$ if $d=2$ and $\dim(E)>(d+2)/2$ if $d\geq 3$. Then there exist points $x_1,x_2\in E$ such that
\[
\mathcal{L}^3((A_{\Delta})^{\text{fish}}_{x_1,x_2})>0.
\]
\end{thm}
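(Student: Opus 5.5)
The proof splits by dimension. For $d\geq 3$ the statement is a special case of Theorem \ref{thm: fish}, proved by the Jacobian method. The fish complex $G$ has pins $\mathcal{P}=\{v_1,v_2\}$ and three triangles $(v_1,v_2,v_3)$, $(v_2,v_3,v_4)$, $(v_4,v_5,v_6)$. To each triangle we assign an edge that lies only in that triangle and is not a pinned edge: $\{v_1,v_3\}$, $\{v_2,v_4\}$ and $\{v_5,v_6\}$ respectively. Proposition \ref{prop: pinned suff transfer} then gives length-area transferal.

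The underlying pinned distance graph is built by adding $v_3$ (adjacent to $v_1,v_2$), then $v_4$ (adjacent to $v_2,v_3$), then $v_5,v_6$, each with at most two earlier neighbours. So it is $2$-admissible in the sense of \cite{BFOPRpaperI}. Hence $\dim(E)>\frac{d+2}{2}$ gives positive measure of the pinned distance set for many pin pairs $(x_1,x_2)$. Transferal then pushes this to positive $\mathcal{L}^3$ measure of the area vectors.

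For $d=2$ I would build the configuration inductively with the planar building blocks. Fix $\mu$ an $\alpha$-Frostman measure with $\alpha>1$ and pairwise separated pieces $E_1,\dots,E_6$ of positive measure.

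\textbf{Step 1 (vertex-attached part).} Treat the last triangle $(x_4,x_5,x_6)$ as a singly pinned triangle with pin $x_4$. By Theorem \ref{thm:strengthenedSY} applied with $x_4\in E_4$, $x_5\in E_5$, $x_6\in E_6$, for $\mu_{E_4}$-a.e.\ $x_4$ we get, integrating over $x_5$, that $\{\A(x_4,x_5,x_6)\colon x_5\in E_5,\,x_6\in E_6\}$ has positive measure. Call this full-measure set of good $x_4$ by $G_4\subset E_4$.

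\textbf{Step 2 (edge-attached part).} Handle the chain $(x_1,x_2,x_3)$, $(x_2,x_3,x_4)$ as in the proof of Theorem \ref{cor:edgetrianglechains} with $k=2$, but with $x_4$ restricted to $G_4$. Restricting $\mu_{E_4}$ to $G_4$ loses no mass, so the edge-chain argument, which uses the $L^2$ building block Theorem \ref{thm:trianglebuildingblockintheplane}, still applies. It yields pins $x_1,x_2$ such that the set $S$ of pairs $(x_3,x_4)\in E_3\times G_4$ has the following property: the map $(x_3,x_4)\mapsto(\A(x_1,x_2,x_3),\A(x_2,x_3,x_4))$ pushes the restricted measure to a measure absolutely continuous with respect to $\mathcal{L}^2$, with positive mass. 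Concretely, this comes from $L^2$ density of the pushforward $\mu_{E_3}\mapsto \A(x_1,x_2,\cdot)$, and for a.e.\ $x_3$ an $L^2$ density of $\mu_{E_4}\mapsto\A(x_2,x_3,\cdot)$.

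\textbf{Step 3 (Fubini gluing).} Suppose the full area set had $\mathcal{L}^3$ measure zero. Then for a.e.\ $(a_1,a_2)$ in the image, the fibre $\{a_3\}$ would be null. But the image of $S$ under the first two areas has positive $\mathcal{L}^2$ measure. Each point of it is realized by some $x_4\in G_4$, whose third-area set has positive $\mathcal{L}^1$ measure. Fubini then gives positive $\mathcal{L}^3$ measure. Distinctness of the points is automatic because the $E_i$ are disjoint.

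\textbf{Main obstacle.} The delicate point is measurability together with the absolute-continuity bookkeeping in Step 2 after pruning $E_4$ to $G_4$. This is exactly why the $L^2$ (rather than merely positive-measure) form of the building block is needed. Once that is in place, the Fubini gluing in Step 3 is routine.
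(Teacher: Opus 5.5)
Your proposal is correct and follows essentially the same route as the paper. For $d\geq 3$ it is Corollary~\ref{thm: fish}: $2$-admissibility after deleting the frozen edge $\{v_1,v_2\}$, plus Proposition~\ref{prop: pinned suff transfer}, where your exclusive edges $\{v_1,v_3\},\{v_2,v_4\},\{v_5,v_6\}$ work just as well as the paper's $\{v_1,v_3\},\{v_3,v_4\},\{v_4,v_5\}$. For $d=2$ it is the paper's argument: prune $E_4$ to good tail pins, run the edge-attached chain gluing of Proposition~\ref{prop:trianglesbyedges} on $E_1,E_2,E_3,E_4'$, and finish with Fubini.

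The only difference is cosmetic. In Step 2 you phrase the conclusion as absolute continuity of the pushforward of $\mu_{E_3}\times\mu_{E_4}$ under the two-area map, and your ``concretely'' justification does establish this. The paper only needs positive Lebesgue measure of the image, and either suffices. Taking a compact positive-measure subset of $G_4$, as the paper does, also removes the measurability concern you flag.
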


\subsection*{Organization}
Here is an overview of the organization of the paper. In Section \ref{sec: preliminaries} we introduce some preliminaries including the notion of a simplicial complex which records not only triangles but also their sides. In Section \ref{sec: jacobianmethod} we develop the Jacobian method that allows us to transfer results on distance graphs to area or volume vectors determined by simplicial complexes. In Section \ref{sec: projectionmethod} we refine a theorem of Shmerkin and Yavicoli quantifying the number of pins for establishing a threshold for double pinned areas of triangles in the plane. We use this building block to develop results on chains of areas of triangles connected either by vertices or edges inspired by the inductive Fubini argument strategy of Ou and Taylor and record these inductive strategies in Appendix \ref{structuralThm}.

\subsection*{Acknowledgement}
Y.O. is supported in part by NSF DMS-2142221 and NSF DMS-2055008. The authors would like to thank Alex Iosevich, Matt Larson, Jill Pipher and Maya Sankar for many helpful and inspiring conversations on the topic of this paper.

\vskip1.5em

\section{Preliminaries on hypergraphs, simplicial complexes, and volumes}\label{sec: preliminaries}
Given a graph $G=(\mathcal{V},\mathcal{E})$, we are interested in understanding the area vectors determined by certain triangles appearing in the graph. A triangle is determined by multiple vertices, which puts us naturally into the world of \textbf{hypergraphs}. For the reader's convenience, we review the basic definitions on hypergraphs here. Throughout, we will assume all sets are finite as we only deal with finite hypergraphs in this paper.
\begin{defn}
    A hypergraph is a pair $(\mathcal{V},\E)$ where $\mathcal{V}$ is a set we call the vertices, and $\E \subset 2^\mathcal{V}$ is a collection of subsets of $\mathcal{V}$, which we call the hyperedges of the graph. 
\end{defn}

In the case that every $e\in \E$ has cardinality 2, we recover the usual definition of a graph. Similarly, an edge $e$ with cardinality 3 is a triple of vertices, which determines a triangle. Note that, under this definition, it is possible to have a hyperedge $e$ included in $\E$ while certain proper subsets (which may correspond to edges, for instance) are not included in $\E$. We want to rule out this behavior, so we will work with simplicial complexes instead.
\begin{defn}
    A hypergraph $(\mathcal{V},\E)$ is a simplicial complex if whenever $e\in \E$ and $f\subset e$ with $|f|>1$ (where $|\cdot |$ denotes cardinality), we have that $f\in \E$.
\end{defn}
This definition guarantees, among other things, that whenever a triangle is included in the data of a hypergraph, then all the edges of that triangle are also included. The restriction that $|f|>1$ is for convenience so as to avoid listing vertices again in the hyperedge set.
\begin{defn}
    In a simplicial complex $(\mathcal{V},\E)$, we say that $e\in \E$ is a $k$-simplex if $|e|=k+1$. If we wish, we may stratify the simplicial complex into the tuple $(\mathcal{V},\E_1,\ldots,\E_m)$ where for each $1\leq i\leq m$, $e\in \E_i$ if and only if $e$ is an $i$-simplex.
\end{defn}
In particular, the closure properties of a simplicial complex guarantee that if a $k$-simplex is included as a hyperedge, then all its lower dimensional faces are also included as hyperedges.

As an example, consider the following hypergraph. \\
\begin{figure}[h]
    \centering
\begin{tikzpicture}[scale=2]
    \coordinate (A) at (0, 0);
    \coordinate (B) at (1, 0);
    \coordinate (C) at (0.5, {sqrt(3)/2});
    
    \coordinate (D) at (2, 0);
    \coordinate (E) at (1.5, {sqrt(3)/2});
    
    \coordinate (F) at (1, {sqrt(3)});

    \filldraw[fill=gray!40, draw=black, thick] (A) -- (B) -- (C) -- cycle; 
    \filldraw[fill=gray!40, draw=black, thick] (B) -- (D) -- (E) -- cycle; 
    \filldraw[fill=gray!40, draw=black, thick] (C) -- (E) -- (F) -- cycle; 
    \foreach \point in {A, B, C, D, E, F} {
        \fill[black] (\point) circle (1.5pt);
    }
    \node[black,shift={(-0.3,0)}]      at (A) {$v_{1}$};
    \node[black,shift={(0,-0.3)}]      at (B) {$v_{2}$};
    \node[black,shift={(-0.3,0)}]      at (C) {$v_{3}$};
    \node[black,shift={(0.3,0)}]      at (D) {$v_{4}$};
    \node[black,shift={(0.3,0)}]      at (E) {$v_{5}$};
    \node[black,shift={(-0.3,0)}]      at (F) {$v_{6}$};
\end{tikzpicture}
\end{figure} \\
Here, the shaded triangles refer to the $2$-simplices in the hyperedge set. For this hypergraph, we have that $\E_2=\{\{v_1,v_2,v_3\},\{v_2,v_4,v_5\},\{v_3,v_5,v_6\}\}$; note that the triangle with vertices at $v_2,v_3,v_5$ is not included in the hyperedge set, even though all the edges making it up are in $\E$.

In particular, we will find it useful to recall formulas for the volumes of $k$-simplices in terms of the side lengths (and of course, these side lengths are determined by the vertices). For $2$-simplices, this is the well-known Heron's formula, which says that the area of a triangle can be computed in terms of the side lengths. Let $x_1,x_2,x_3$ be three vertices of a triangle in $\R^d$. Let $s=|x_1-x_2|$, $t_1=|x_1-x_3|, t_2=|x_2-x_3|$. Then Heron's formula implies
\begin{equation}\label{eq heron area}
16\text{ Area}^2(x_1,x_{2},x_3)=((t_1+t_2)^2-s^2)(s^2-(t_1-t_2)^2)=:\varphi_{s}(t_1,t_{2})\end{equation}

More generally, there are analogues of Heron's formula for computing $k$-volumes of higher-dimensional $k$-simplices, known as Cayley--Menger formulas. Given $N+1$ points $$x_1,x_2,\ldots, x_{N+1}\in\R^{k},$$ where $k\geq N$, let $B$ be the $(N+1)\times (N+1)$ matrix whose $(i,j)$th entry is $b_{ij}=|x_{i}-x_{j}|^2$ and let $\tilde{B}$ be the $(N+2)\times (N+2)$ block matrix
\[
\tilde{B} = \begin{bmatrix}
    B & \mathbf{1} \\
    \mathbf{1}^{T} & 0
\end{bmatrix}
\]

It is well known (see e.g. \cite[Section 2]{CMSource}) that the formula for the volume of an $N$-simplex with vertices $x_1,\ldots,x_{N+1}$ is given by
\begin{equation}\label{eq Cayley Menger}
\text{Vol}(x_1,\ldots,x_{N+1})^2=\frac{1}{2^N(N!)^2}|\det\tilde{B}|
\end{equation}
For our applications, we will not be too concerned with the exact formula; the most relevant thing is that the squared volume can be computed as a polynomial function of the squared side lengths, and hence as a polynomial function of the side lengths, of the \(N\)-simplex in question.

For simplicity, we will focus primarily on the case of areas of $2$-simplices. Given a $2$-dimensional simplicial complex $G=(\mathcal{V},\mathcal{E})$, we denote its area vector set as
\begin{equation}\label{eq area vector set}
(A_{\Delta})^G(E):=\{(\text{Area}(x_i,x_j,x_k)^2)_{(v_i,v_j,v_k)\in\mathcal{E}_2}:x_1,\ldots,x_{|\mathcal{V}|}\in E\}
\end{equation}

In the pinned scenario, where we add pins at the vertices in $\mathcal{P}=\{v_1,\dots,v_m\}\subset \mathcal{V}$, the pinned counterpart becomes
$$(A_{\Delta})^G_{x_1,\dots ,x_m}(E):=\{(\text{Area}(x_i,x_j,x_k)^2)_{(v_i,v_j,v_k)\in\mathcal{E}_2}:x_{m+1},\ldots,x_{|\mathcal{V}|}\in E\}
$$
as long as this choice of pins doesn't fully pin any $2$-simplex $T\in \mathcal{E}_2$, that is, if for all $T=\{v_i,v_j,v_k\}\in \mathcal{E}_2$ we have $T$ not contained in $\mathcal{P}$. In the case where some $2$-simplices are fully pinned, we just have to be careful not to include them in the area set definition (since the area won't vary for that triangle).

Note that the difference between studying area and its square is unimportant for dimensional considerations as the map $t\mapsto t^2$ is bi-Lipschitz onto its image when restricted to a compact domain of the form $[\epsilon,M]$. We will compose with other suitably nice maps in the sequel as well in order to simplify the algebraic expressions that will appear.

\section{Jacobian Method}\label{sec: jacobianmethod}

Throughout this section, we assume that $d\geq 3$. The building block, that drives our results for general pinned graphs of distances, are so called $k$-stars. Let $k\geq 1$ be an integer. By a $k$-star we will mean a graph in $(k+1)$ vertices in which a central vertex is connected by edges to any other vertex, and these are the only vertices in the graph. We will denote such a graph by $S_k$ (see the Figure \ref{fig: stargraph} for $S_7$).

\begin{figure}[h]
    \centering
    \begin{tikzpicture}[scale=1.5, 
    every node/.style={circle, draw, minimum size=6pt, inner sep=0pt},
    label distance=1mm]

\node[fill=black, label=below:$v_8$] (C) at (0.2,0) {};

\foreach \i in {1,...,7} {
    \node[fill=magenta, label=above:$v_{\i}$] 
        (A\i) at ({cos(360/7*\i)}, {sin(360/7*\i)}) {};
    \draw (C) -- (A\i);
}

\end{tikzpicture}
    \caption{$S_7$, the $7$-star graph, with pins at each leaf.}
    \label{fig: stargraph}
\end{figure}

\begin{defn}\label{def: pinnedkstarsinE} Let $E\subset \R^d$ be a compact set.
The set of $k$-stars generated by $E$ with pins at $x_1,x_2,\dots,x_k$ is given by 
$$\Delta^{k-star}_{x_1,\dots,x_k}(E):=\{(|x_1-y|,|x_2-y|,\dots,|x_k-y|)\colon y\in E\}\subset \R^k.$$
\end{defn}

In \cite{IPPS22} they showed that if $E\subset \R^d$ is a compact set with $\dim(E)>\frac{d+k}{2}$ then there exists $x_1,x_2,\dots, x_k\in E$ such that $\Delta^{k-star}_{x_1,\dots,x_k}(E)$ has positive $k$-dimensional Lebesgue measure. In fact, they proved something even stronger, which we will recall below.

First, let us introduce a couple of definitions.

\begin{defn} Given a compact set $E\subset \R^d$ an $s$-Frostman measure $\mu$ on $E$ is a nonzero compactly supported on $E$ finite Borel measure satisfying that 
$$\mu(B(x,r))\lesssim r^s,\, \text{ for all }x\in \R^d\text{ and }r>0.$$
\end{defn}
By Frostman's lemma, such measures exist for all $s<\dim E$.

\begin{defn}\label{def: restrictedmeasure}
    Given an $s$-Frostman measure $\mu$ supported on a compact $E\subset \R^d$ and $F$ a compact subset of $E$, denote by $\mu_F$ the restriction of $\mu$ to $F$. That is, $\mu_F(A)=\mu(A\cap F)$ for any Borel set $A\subset \R^d$. When $\mu(F)>0$, $\mu_F$ is still an $s$-Frostman measure.

\end{defn}

\begin{thm}[\cite{IPPS22}]\label{thm:k starinIPPS}

Let $E\subset \R^d$ be a compact set with $\dim(E)>\frac{d+k}{2}$. Take $\mu$ an $s$-Frostman measure on $E$, with $s>\frac{d+k}{2}$. 
    Let $E_1,E_2,\dots E_{k+1}$ be pairwise separated compact subsets of $E$ with $\mu(E_i)>0$ for all $i=1,2,\dots ,k+1$, then for $\mu_{E_1}\times \mu_{E_2}\times \cdots \times\mu_{E_{k}}$ almost every     $(x_1,x_2,\dots ,x_k)\in E_1\times E_2\times \dots \times E_k$, one has that the pushforward measure
    $$(d^{k-star}_{x_1,x_2,\dots ,x_k})_{*}(\mu_{E_{k+1}})\text{ belongs to } L^2(\R^k),$$
    where $d^{k-star}_{x_1,x_2,\dots ,x_k}(y):=(|x_1-y|,|x_2-y|,\dots ,|x_k-y|)$, and, consequently,
$$\mathcal{L}^k\left(\Delta^{k-star}_{x_1,\dots,x_k}(E_{k+1})\right)>0.$$
\end{thm}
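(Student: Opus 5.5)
The plan is the standard $L^2$ route: average the $L^2$ norm of the pushforward over the pins and show this average is finite. Since a function with finite integral is finite almost everywhere, the $L^2$ conclusion then holds for $\mu_{E_1}\times\cdots\times\mu_{E_k}$-a.e. $(x_1,\dots,x_k)$. Positivity of $\mathcal{L}^k(\Delta^{k-star}_{x_1,\dots,x_k}(E_{k+1}))$ follows by Cauchy–Schwarz, because a nonzero measure in $L^2$ cannot be supported on a Lebesgue-null set.

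\textbf{Squared distances and the averaged norm.} Because the $E_i$ are pairwise separated, the distances $|x_i-y|$ lie in a compact interval $[\epsilon,M]$. There $t\mapsto t^2$ is bi-Lipschitz, so it suffices to treat the map $y\mapsto (|x_i-y|^2)_{i=1}^k$. Fix a smooth bump $\phi$ and set $\phi_\delta=\delta^{-1}\phi(\cdot/\delta)$. Writing $\nu_x$ for the pushforward, Fatou's lemma reduces the claim to a bound, uniform in $\delta$, on
\[
\int \|\nu_x*\phi_\delta^{\otimes k}\|_2^2\, d\mu_{E_1}(x_1)\cdots d\mu_{E_k}(x_k)
\approx \iint \Big(\int \phi_\delta\big(|x-y|^2-|x-y'|^2\big)\,d\mu(x)\Big)^{k} d\mu_{E_{k+1}}(y)\,d\mu_{E_{k+1}}(y').
\]
The key structural point is that the pins enter independently, so the $x$-integral factors into a $k$-th power. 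Set
\[
g_\delta(y,y')=\int\phi_\delta\big(|x-y|^2-|x-y'|^2\big)\,d\mu(x).
\]
Since $|x-y|^2-|x-y'|^2=-2x\cdot(y-y')+|y|^2-|y'|^2$, the quantity $g_\delta(y,y')$ is essentially $|y-y'|^{-1}$ times the density, at one specific point, of the orthogonal projection $\pi_\theta\mu$ in the direction $\theta=(y-y')/|y-y'|$.

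\textbf{Reducing to an energy-type estimate.} By Fourier inversion in the one-dimensional variable,
\[
g_\delta(y,y')=\int \hat\phi(\delta t)\,\hat\mu\big(2t(y-y')\big)\,e^{it(|y|^2-|y'|^2)}\,dt .
\]
I would then split $g^k_\delta\le \|g_\delta\|_{L^\infty_{y'}}^{\,k-1}\, g_\delta$ only where this is harmless. More robustly, I would expand the $k$-th power as a $k$-fold $t$-integral and integrate in $(y,y')$ against $\mu\times\mu$. This produces expressions of the form $\int |\hat\mu(\xi)|^2 K(\xi)\,d\xi$, with a kernel $K$ coming from the remaining $t$-integrations. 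The near-diagonal factor $|y-y'|^{-k}$ is handled separately: it is integrable against $\mu\times\mu$ because $s>(d+k)/2\ge k$. The target is to bound $K(\xi)\lesssim|\xi|^{k-d+\varepsilon}$. Once this holds, finiteness follows from the energy bound $\int|\hat\mu(\xi)|^2|\xi|^{s'-d}\,d\xi<\infty$ for $s'<s$. The requirement $2s-d>k$, i.e. $s>\tfrac{d+k}{2}$, comes out exactly as the gain needed to absorb the $k$ one-dimensional integrations in $t$. Each of these integrations costs one power of the frequency, and each paired factor of $\hat\mu$ is averaged over its sphere.

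\textbf{Main obstacle.} The hard step is the kernel bound for the $k$-fold oscillatory $t$-integral. Controlling it requires the phases $e^{it_j(|y|^2-|y'|^2)}$ to decouple from the $\hat\mu(2t_j(y-y'))$ factors without losing more than one power of frequency per pin. I would first attempt this via stationary phase and spherical-average decay, $\int_{S^{d-1}}|\hat\mu(r\omega)|^2\,d\omega\lesssim r^{-(s-1)}$ in the crude form. If that loses too much, I would fall back on a Mattila-integral type identity applied to the $k$-fold product structure.
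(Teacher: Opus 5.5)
The paper gives no proof of this statement; it is quoted from \cite{IPPS22}, so I am comparing against the standard argument. Your opening reduction is sound. Averaging over independent pins turns $\int\|\nu_{\vec x}\|_2^2$ into $\iint g(y,y')^k\,d\mu\,d\mu$, and $g$ is $|y-y'|^{-1}$ times a projection density. However, the proposal stops exactly where the content of the theorem lies, and the route you sketch for the key estimate does not work, for three reasons.

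First, when you expand $g_\delta^k$ as a $k$-fold $t$-integral, the phase $e^{iT(|y|^2-|y'|^2)}$ with $T=t_1+\dots+t_k$ does not decouple. Integrating in $(y,y')$ gives $|\widehat{e^{iT|\cdot|^2}\mu}(2\sum_j t_jx_j)|^2$, not $|\hat\mu(\xi)|^2$. Since $T$ varies with $\vec t$, there is no kernel $K$ on $\mathbb{R}^d$ to pair with $|\hat\mu|^2$.

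Second, even granting such a kernel, the bookkeeping is inconsistent. A bound $K(\xi)\lesssim|\xi|^{k-d+\varepsilon}$ combined with $I_{s'}(\mu)<\infty$ would only require $s>k$, not $2s-d>k$. For $k=1$, $d\ge 3$, it would then give the $L^2$ conclusion at any $s>1$. That contradicts Falconer's lattice sets of dimension $d/2$ whose distance set is null. A kernel obtained by integrating out $s$-Frostman pins cannot decay like the Lebesgue one.

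Third, neither the $L^\infty_{y'}$ splitting nor separating off $|y-y'|^{-k}$ is available. Frostman only gives $\mu(\text{slab of width } w)\lesssim w^{s-d+1}$, so the projection densities are unbounded when $s<d$. Their $k$-th moment is precisely the difficulty.

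The missing idea is to keep the phase by lifting to the paraboloid, using $|x-y|^2-|x|^2=(-2x,1)\cdot(y,|y|^2)$. Let $\tilde\mu$ be the push-forward of $\mu_{E_{k+1}}$ under $y\mapsto (y,|y|^2)$, and set $X_j=(-2x_j,1)$. Plancherel in $\mathbb{R}^k$ gives
\[
\int\|\nu_{\vec x}\|_{2}^2\,d\mu^k(\vec x)=\int\!\!\int_{\mathbb{R}^k}\Bigl|\widehat{\tilde\mu}\Bigl(\sum_{j=1}^k t_jX_j\Bigr)\Bigr|^2\,d\vec t\,d\mu^k(\vec x)=\int_{\mathbb{R}^{d+1}}|\widehat{\tilde\mu}|^2\,d\Lambda,
\]
where $\Lambda$ is the push-forward of $\mu^k\times\mathcal{L}^k$ under $(\vec x,\vec t)\mapsto\sum_j t_jX_j$. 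In other words, $\nu_{\vec x}$ is essentially the projection of $\tilde\mu$ onto the $k$-plane $\mathrm{span}\{X_j\}\subset\mathbb{R}^{d+1}$.

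Restrict to pin tuples that are $\epsilon$-affinely independent; these exhaust $\mu^k$-almost every tuple. The Frostman property of the pins then gives $\Lambda(B(\zeta,1))\lesssim|\zeta|^{-(s-k+1)}$. The reason is that a unit ball at $\zeta$ meets $\mathrm{span}\{X_j\}$ only if, given $x_1,\dots,x_{k-1}$, the pin $x_k$ lies in an $O(|\zeta|^{-1})$-neighbourhood of a $(k-1)$-dimensional affine plane.

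By local constancy of $|\widehat{\tilde\mu}|^2$, the integral is then $\lesssim I_{d+k-s}(\tilde\mu)\approx I_{d+k-s}(\mu_{E_{k+1}})$. This is finite exactly when $s>\frac{d+k}{2}$. The threshold comes from the ambient dimension $d+1$ of the lift plus $k-1$ from the pin plane, balanced against $s$ from the pins and $s$ from the target. This lift is presumably the paraboloid connection in the title of \cite{IPPS22}.
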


Using Theorem \ref{thm:k starinIPPS}, we will show that we can find positive measure sets of area vectors corresponding to fan graphs; this can also be viewed as a blueprint for approaching more general graphs.

\begin{figure}[h]
\centering
\begin{tikzpicture}[scale=1.4, >=latex]

\tikzset{
  vtx/.style = {circle, inner sep=0pt, minimum size=6pt},
  edge/.style = {thick}
}

\coordinate (w)  at (0,0);

\coordinate (v1) at (1,1.6);
\coordinate (v2) at (2,1.1);
\coordinate (v3) at (2.4,0);
\coordinate (v4) at (2,-1.1);
\coordinate (v5) at (1,-1.6);

\draw[edge] (w) -- (v1);
\draw[edge] (w) -- (v2);
\draw[edge] (w) -- (v3);
\draw[edge] (w) -- (v4);
\draw[edge] (w) -- (v5);

\draw[edge] (v1) -- (v2);
\draw[edge] (v2) -- (v3);
\draw[edge] (v3) -- (v4);
\draw[edge] (v4) -- (v5);

\node[vtx, fill=black, label=left:$w$] at (w) {};

\node[vtx, fill=magenta, label=above:$v_1$] at (v1) {};
\node[vtx, fill=magenta, label=right:$v_2$] at (v2) {};
\node[vtx, fill=magenta, label=right:$v_3$] at (v3) {};
\node[vtx, fill=magenta, label=right:$v_4$] at (v4) {};
\node[vtx, fill=magenta, label=below:$v_5$] at (v5) {};

\end{tikzpicture}
\caption{The fan graph $F_{1,5}$ with pinned vertices at $v_1,\dots,v_5$.}
\label{fig:fanoftraingles}
\end{figure}

For $k\ge 2$, let $F_{1,k}$ denote the fan graph with $k+1$ vertices, including one distinguished vertex $w$ of degree $k$. Formally, $F_{1,k}$ is defined as the graph join of $K_1$ (the singleton graph) and $P_k$ (the path graph with $k$ vertices), where the graph join consists of taking the union of the vertex sets and adding an edge from every vertex in $K_1$ to every vertex in $P_k$. The graph $F_{1,k}$ has $k-1$ triangles, whose vertices are always triples of the form $(v_i,v_{i+1},w)$ where $1\le i\le k-1$. We will study area sets of fan graphs where every vertex except $w$ is pinned (see Figure \ref{fig:fanoftraingles} for $k=5$). Precisely, for any compact set $E\subset \mathbb{R}^d$ and $x_1,x_2,\dots, x_k\in E$, define
\[
(A_{\Delta})^{F_{1,k}}_{x_1,\ldots,x_k}(E):=\{(\text{Area}(x_1,x_2,y),\text{Area}(x_2,x_3,y),\ldots,\text{Area}(x_{k-1},x_k,y)):y\in E\}.
\]
We will want to compute the areas in terms of the three side lengths of the triangle, using Heron's formula. It will be convenient to work with the square of the area rather than the area; notice that when the areas are upper and lower bounded (which is the case when the vertices are taken to be in separated compact subsets of Euclidean space), then the map carrying areas to their squares is bi-Lipschitz. Let $t_i=|y-x_i|$, $1\leq i\leq k$ and let $s_i=|x_{i+1}-x_i|$, $1\leq i\leq k-1$. Then Heron's formula implies
\[
16\text{ Area}^2(x_i,x_{i+1},y)=((t_i+t_{i+1})^2-s_i^2)(s_i^2-(t_i-t_{i+1})^2)=:\varphi_{s_i}(t_i,t_{i+1}).
\]

Our first task in this section is to show that Theorem \ref{thm:k starinIPPS} transfers to this setting to give lots of area vectors, even when all of the non-distinguished vertices in the fan graph are pinned.

\begin{thm} \label{thm: k pinned fans}
   Suppose $d\geq 3$ and $2\leq k<d$. Let $E\subset \R^d$ be a compact set with $\dim(E)>\frac{d+k}{2}$. Take $\mu$ an $s$-Frostman measure on $E$, with $s>\frac{d+k}{2}$. 
    Let $E_1,E_2,\dots E_{k+1}$ be pairwise separated compact subsets of $E$ with $\mu(E_i)>0$ for all $i=1,2,\dots ,k+1$, then for $\mu_{E_1}\times \mu_{E_2}\times \cdots \times\mu_{E_{k}}$ almost every $(x_1,x_2,\dots ,x_k)\in E_1\times E_2\times \dots \times E_k$, one has that
    \[
    \mathcal{L}^{k-1}((A_{\Delta})^{F_{1,k}}_{x_1,\ldots,x_k}(E_{k+1}))>0.
    \]
\end{thm}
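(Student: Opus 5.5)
The plan is to obtain the result from Theorem \ref{thm:k starinIPPS} by composing the pinned $k$-star map with a smooth map built from Heron's formula. Fix pins $(x_1,\dots,x_k)$ in the full-measure set given by Theorem \ref{thm:k starinIPPS}, applied with $E_1,\dots,E_{k+1}$. For such pins the pushforward $\nu:=(d^{k-star}_{x_1,\dots,x_k})_*(\mu_{E_{k+1}})$ is a nonzero measure in $L^2(\R^k)$. Hence it is absolutely continuous, and its support $S\subset\R^k$ has positive $\mathcal{L}^k$ measure. The side lengths $s_i=|x_{i+1}-x_i|$ are now fixed. Define $\Phi:\R^k\to\R^{k-1}$ by
\[
\Phi(t_1,\dots,t_k)=\bigl(\varphi_{s_1}(t_1,t_2),\dots,\varphi_{s_{k-1}}(t_{k-1},t_k)\bigr).
\]
By \eqref{eq heron area}, $16$ times the vector of squared areas in $(A_\Delta)^{F_{1,k}}_{x_1,\dots,x_k}(E_{k+1})$ equals $\Phi(\Delta^{k-star}_{x_1,\dots,x_k}(E_{k+1}))$. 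Squared areas versus areas is harmless. On the relevant compact range the areas are bounded away from $0$ except on a set we will show is null, so squaring is locally bi-Lipschitz there. It therefore suffices to show that $\Phi$ maps some positive-measure subset of $S$ onto a set of positive $\mathcal{L}^{k-1}$ measure.

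The key step is a submersion argument. $\Phi$ is a polynomial map, and I will show its Jacobian $D\Phi$ has rank $k-1$ off a proper algebraic subset $Z\subset\R^k$, which is Lebesgue null. The matrix $D\Phi$ is bidiagonal: row $i$ involves only $\partial_{t_i}\varphi_{s_i}$ and $\partial_{t_{i+1}}\varphi_{s_i}$. Deleting the first column leaves an upper-triangular $(k-1)\times(k-1)$ minor with diagonal entries $\partial_{t_{i+1}}\varphi_{s_i}(t_i,t_{i+1})$, so its determinant is $\prod_i\partial_{t_{i+1}}\varphi_{s_i}$. A direct computation from $\varphi_s(t_1,t_2)=-(t_1^2-t_2^2)^2+2s^2(t_1^2+t_2^2)-s^4$ gives
\[
\partial_{t_2}\varphi_s=4t_2\bigl(s^2+t_1^2-t_2^2\bigr).
\]
This factor is not identically zero. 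It vanishes exactly when $t_2=0$ or when the angle of the triangle at $x_i$ is right, and both loci are null hypersurfaces. So $Z$, the zero set of a nontrivial polynomial, has $\mathcal{L}^k(Z)=0$, and $\nu(Z)=0$ because $\nu$ is absolutely continuous.

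Next, choose a Lebesgue density point $t^*$ of $S\setminus Z$, at which $D\Phi$ has full rank $k-1$. By the implicit function theorem (the constant rank theorem), near $t^*$ there is a local diffeomorphism $\Psi$ with $\Phi\circ\Psi^{-1}$ equal to the coordinate projection $\R^k\to\R^{k-1}$. The set $\Psi(S\cap U)$ still has positive $\mathcal{L}^k$ measure, and by Fubini its coordinate projection onto $\R^{k-1}$ has positive $\mathcal{L}^{k-1}$ measure. Equivalently, the pushforward $\Phi_*(\nu|_U)$ is absolutely continuous and nonzero, by the coarea formula. Thus $\Phi(S)$ has positive measure. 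One subtlety is that $S$ is the support of $\nu$ rather than the star set itself. I will handle it by working with the star set $\Delta^{k-star}_{x_1,\dots,x_k}(E_{k+1})$ directly: it is compact with positive measure, and it carries full $\nu$-mass.

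I expect the main obstacle to be the nondegeneracy bookkeeping rather than the harmonic analysis. Concretely, I need the Jacobian minor to be nonvanishing on a positive-measure part of the star set, and the areas to be nonzero there so the square root is Lipschitz. Separation of the $E_i$ gives $t_i, s_i\geq c>0$. Degenerate triangles form a null set of $t$-values, namely the zero set of $\varphi_{s_i}$, which is again a nontrivial polynomial. Both exceptional sets are therefore $\nu$-null, and removing them from the positive-measure star set leaves positive measure. This gives the claim for $\mu_{E_1}\times\cdots\times\mu_{E_k}$-almost every choice of pins.
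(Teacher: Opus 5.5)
Your proposal is correct and follows essentially the paper's argument. You fix pins from the pinned $k$-star theorem, compose with the Heron map, and use a triangular Jacobian that is nonzero off a null algebraic set. The paper makes $\Phi$ square by appending the coordinate $t_k$, so the diagonal entries are $\partial_{t_j}\varphi_{s_j}$ and $1$, and then projects onto the first $k-1$ coordinates; your local straightening $\Psi$ does the same thing with $t_1$ in place of $t_k$.

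Two minor points, neither affecting correctness. The minor obtained by deleting the first column is lower rather than upper triangular. Also, the absolute continuity of $\nu$ is not needed, since $\mathcal{L}^k(Z)=0$ together with $\mathcal{L}^k(\Delta^{k-star}_{x_1,\dots,x_k}(E_{k+1}))>0$ already suffices.
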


\begin{proof}
From Theorem \ref{thm:k starinIPPS} we know that for $\mu_{E_1}\times \mu_{E_2}\times \dots \times\mu_{E_k}$ almost every $(x_1,x_2,\dots ,x_k)\in E_1\times E_2\times \dots E_k$, it holds that 
$$\mathcal{L}^k\left(\Delta^{k-star}_{x_1,\dots,x_k}(E_{k+1})\right)>0.$$

We claim that any such $(x_1,x_2,\dots ,x_k)$
satisfy the desired property that
   $$ \mathcal{L}^{k-1}((A_{\Delta})^{F_{1,k}}_{x_1,\ldots,x_k}(E_{k+1}))>0
    .$$ Indeed, for these fixed $x_1,x_2,\dots, x_k$ let $s_j:=|x_j-x_{j+1}|$, $1\leq j\leq k-1$, and consider the mapping
    \[
    \Phi:\Delta^{k-star}_{x_1,\dots,x_k}(E_{k+1})\rightarrow \mathbb{R}^{k},\qquad (t_1,\ldots,t_k)\mapsto (\varphi_{s_1}(t_1,t_2),\ldots,\varphi_{s_{k-1}}(t_{k-1},t_k),t_k),
    \]
    where we have parameterized the domain via $t_j=|x_j-y|$, $y\in E_{k+1}$. We want to show that $\Phi$ is locally a bi-Lipschitz mapping onto its image. Note that the derivative mapping $D\Phi$ is upper triangular, with the $j$th diagonal entry being given by a nonzero polynomial of $t_j,t_{j+1}$ for $j<k$. The final diagonal entry of $D\Phi$ is the constant function 1. Thus, we see that the determinant $|D\Phi|$ vanishes on a union of $k-1$ many varieties, i.e. on some set $S\subset \mathbb{R}^k$ with $\mathcal{L}^k(S)=0$. Since $\mathcal{L}^k\left(\Delta^{k-star}_{x_1,\dots,x_k}(E_{k+1})\right)>0$, it follows that for $\epsilon>0$ sufficiently small, we can find an open neighborhood $U_{\epsilon}\supset S$ such that $|D\Phi|>\epsilon$ on $\Delta^{k-star}_{x_1,\dots,x_k}(E_{k+1})\setminus U_{\epsilon}$ and $\mathcal{L}^k(\Delta^{k-star}_{x_1,\dots,x_k}(E_{k+1})\setminus U_{\epsilon})>0$. 
    Since $\Phi$ has non-vanishing Jacobian on a subset of $\Delta^{k\text{-star}}_{x_1,\dots,x_k}(E_{k+1})$ of positive measure, we have
    \[
\mathcal{L}^k\big(\Phi(\Delta^{k\text{-star}}_{x_1,\dots,x_k}(E_{k+1}))\big)>0.
    \]
    Now, let $\pi_{k,k-1}:\mathbb{R}^k\rightarrow \mathbb{R}^{k-1}$ denote the projection $\pi_{k,k-1}(a_1,\ldots,a_k)=(a_1,\ldots,a_{k-1})$. Since $\mathcal{L}^k(\Phi(\Delta^{k-star}_{x_1,\dots,x_k}(E_{k+1})))>0$, it follows that $\mathcal{L}^{k-1}(\pi_{k,k-1}(\Phi(\Delta^{k-star}_{x_1,\dots,x_k}(E_{k+1}))))>0$. Unraveling the definitions, we deduce that $\mathcal{L}^{k-1}((A_{\Delta})^{F_{1,k}}_{x_1,\ldots,x_k}(E_{k+1}))>0$, as desired.
\end{proof}
One could run variants of this argument for other graphs. We choose not to do this here; instead, we will give a more general framework that will work for a wide variety of graphs in practice. The key idea is that we used the hypothesis of having an abundance of pinned length vectors for the $k$-star graph in order to deduce that we also had an abundance of pinned area vectors for the graph with the same dimensional threshold.

\subsection{Length to area transferal principles}
Here, we study the following question: given a simplicial complex $G=(\mathcal{V},\mathcal{E})$, what graph-theoretic conditions on $G$ guarantee that if the distance set $\Delta^G(E)$ has positive measure when $\dim E>s$, then the area set $(A_{\Delta})^G(E)$ also has positive measure whenever $\dim E>s$?

By Heron's formula, we have a surjective map $\Phi_E: \Delta^G(E)\rightarrow A_G(E)$ that we call the \textbf{length-to-area} map for $G$ associated to $E$. When we take $E$ to be all of Euclidean space, we may omit the subscript. It is given by
\[
\Phi(t_1,\ldots,t_{|\mathcal{E}_1|})=(\alpha(t_i,t_j,t_k))_{\{v_i,v_j,v_k\}\in\mathcal{E}}
\]
where $\alpha(x,y,z)$ denotes the area of the triangle with side lengths $x,y,z$. Composing with some nice enough maps, that is, studying instead $C\alpha^2(\sqrt{t_1},\sqrt{t_2},\sqrt{t_3})$, we may take
\[
\alpha(x,y,z)=-\frac{x^2}{2}-\frac{y^2}{2}-\frac{z^2}{2}+xy+xz+yz.
\]
We observe that 
$$\nabla\alpha(x,y,z)=(-x+y+z,x-y+z,x+y-z),$$
which provides a simple structure to the rows of $D\Phi$.

Since the domain of $\Phi_E$ has positive measure by assumption, it is enough to show that the derivative of $\Phi$ is surjective for some input $(t_1,\ldots,t_{|\mathcal{E}|})$, even if this input is not in $\Delta^G(E)$. Indeed, we know that $D\Phi$ is full rank somewhere if and only if the matrix-valued function $(D\Phi)(D\Phi)^T$ is invertible somewhere which happens if and only if its determinant (which is a polynomial in $t_1,\ldots,t_{|\mathcal{E}|}$) is not the 0 function. A nonzero polynomial can only vanish on a lower dimension algebraic variety which has measure 0; after excising this wherever it intersects $\Delta^G(E)$, we still have a positive measure subset of our domain on which $D\Phi_E$ is surjective, implying that the image of $\Phi_E$ has positive measure.

Thus, everything reduces to understanding when $D\Phi$ has linearly independent rows for some input. In fact, by Sard's theorem, this gives us a necessary condition.

\begin{prop}
    Let $\Phi$ denote the length-to-area map as above. Suppose that for every input in Euclidean space, the matrix $D\Phi$ does not have full row rank. Then the set $A_G(E)$ does not have positive Lebesgue measure.
\end{prop}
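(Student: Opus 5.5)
The plan is to show that the image $A_G(E)=\Phi_E(\Delta^G(E))$ is contained in the image under $\Phi$ of its critical set, and then to apply Sard's theorem to the polynomial map $\Phi$. Write $n=|\mathcal{E}_1|$ and $m=|\mathcal{E}_2|$. We regard $\Phi$ as a polynomial (hence $C^\infty$) map $\mathbb{R}^n\to\mathbb{R}^m$, in the normalized coordinates where each component is $\alpha(t_i,t_j,t_k)$. The hypothesis says that $\operatorname{rank} D\Phi(t)<m$ for every $t\in\mathbb{R}^n$. Equivalently, every point of $\mathbb{R}^n$ is a critical point of $\Phi$.

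\textbf{Step 1: reduce to Sard.} Since every point of $\mathbb{R}^n$ is critical, Sard's theorem gives $\mathcal{L}^m(\Phi(\mathbb{R}^n))=0$. A polynomial map is $C^\infty$, so Sard applies with no restriction on the relative sizes of $n$ and $m$. In particular, any subset of the image is Lebesgue null. Since $\Delta^G(E)\subset[0,\infty)^n\subset\mathbb{R}^n$, we obtain $\mathcal{L}^m(\Phi(\Delta^G(E)))=0$.

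\textbf{Step 2: undo the normalizations.} The set $A_G(E)$ is defined through actual squared areas, or areas. The normalized quantity $\alpha$ is related to these coordinatewise by a fixed reparametrization: $C\,\mathrm{Area}^2$ expressed in the squared side lengths. The passage from lengths $t$ to squared lengths $t^2$ is a change of variables on the domain. It only replaces the domain set by another subset of $\mathbb{R}^n$, so Step 1 still applies. The passage from $\alpha$ to $\mathrm{Area}$ (a constant multiple, then a square root) acts coordinatewise on the target. To handle it, I would show the true area map is itself of the form $\Psi=h\circ\Phi\circ g$, where $g(t)=t^2$ coordinatewise and $h$ is coordinatewise $u\mapsto\sqrt{u/C}$. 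One option is to apply Sard to $\Psi$ directly on the open set where all areas are positive. Its critical set there is all of that open set, because $Dh$ is invertible away from $u=0$, and the rank of $D\Phi$ is insensitive to the diffeomorphism $g$ away from coordinate hyperplanes. The remaining points are degenerate triangles. Their area vectors have some zero coordinate, so they lie in a union of coordinate hyperplanes, which is $\mathcal{L}^m$-null.

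\textbf{Step 3: main obstacle.} The main subtlety is bookkeeping of this kind: points where $g$ or $h$ fail to be diffeomorphisms, such as zero lengths or zero areas. I would handle them by the splitting above: at such points either some area coordinate vanishes, placing the image in a null hyperplane, or some length vanishes, which is a measure-zero stratum of the domain handled by Sard on lower-dimensional pieces. Alternatively, one can note that $h$ maps null sets to null sets on $(0,\infty)^m$, being locally Lipschitz there. With this in place, the conclusion $\mathcal{L}^m(A_G(E))=0$ follows, which is the statement.
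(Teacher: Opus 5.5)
Your proposal is correct and takes the same route as the paper, whose proof is a one-line appeal to Sard's theorem for $\Phi$ (phrased there as: the image has dimension at most the maximal rank of $D\Phi$, which is less than $m$); the standard critical-value form of Sard you use is enough. The normalization bookkeeping in your Steps 2--3 is sound but more than needed: for squared areas, $A_G(E)$ already lies in a constant multiple of $\Phi(\mathbb{R}^n)$, so zero lengths need no separate treatment, and for unsquared areas your locally-Lipschitz-plus-coordinate-hyperplanes remark settles it.
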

\begin{proof}
    By Sard's theorem, the Hausdorff dimension of the image of $\Phi$ has dimension equal to at most the maximum value of the row rank, which is less than the dimension of the codomain.
\end{proof}

\begin{defn}[Edge area transferal]\label{def: edgetransferal}
    We will say that edge area transferal holds for $G$ if $D\Phi$ has full row rank for some input.
\end{defn}
As a result of our discussion above, this condition guarantees that if we know that $\dim(E)>s$ implies that $\Delta^G(E)$ has positive measure, then we also deduce that $A_G(E)$ has positive measure. Finding conditions that are both necessary and sufficient for edge area transferal is more delicate. We start with a sufficient condition based on the simplicial complex structure that is easy to verify for a variety of examples.

\begin{prop}\label{prop suff transfer}
    Let $G=(\mathcal{V},\mathcal{E})$ be a simplicial complex with the property that for every $T\in \mathcal{E}_2$, there exists an edge $\{v,w\}\subset T$ with the property that for all $2$-simplices $T'\ne T$, we have that $\{v,w\}\not\subset T'$. Then length-area transferal holds for $G$. 
\end{prop}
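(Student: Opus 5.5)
Since edge area transferal is defined as $D\Phi$ having full row rank at some input, it suffices to exhibit one point $t=(t_e)_{e\in\mathcal{E}_1}$ and a $|\mathcal{E}_2|\times|\mathcal{E}_2|$ minor of $D\Phi(t)$ that is nonzero. Rows of $D\Phi$ are indexed by triangles $T\in\mathcal{E}_2$, columns by edges $e\in\mathcal{E}_1$. The row of $T$ is supported on the three columns of the edges of $T$, with entries given by $\nabla\alpha$. With the normalization $\alpha(x,y,z)=-\tfrac{x^2}{2}-\tfrac{y^2}{2}-\tfrac{z^2}{2}+xy+xz+yz$, the entry in column $e\subset T$ equals $(\text{sum of the other two lengths})-t_e$.

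By hypothesis, for each $T$ we may choose an edge $e(T)\subset T$ contained in no other $2$-simplex. These edges are automatically distinct, so $T\mapsto e(T)$ is injective. Consider the square submatrix $M$ of $D\Phi$ obtained by keeping only the columns $\{e(T):T\in\mathcal{E}_2\}$. Because $e(T)$ lies in no triangle other than $T$, the column of $e(T)$ has a single nonzero entry, located in row $T$. After ordering rows and columns compatibly, $M$ is diagonal. Its determinant is
\[
\det M=\prod_{T\in\mathcal{E}_2}\partial_{t_{e(T)}}\alpha\big(t|_T\big),
\]
where each factor has the form $-t_{e(T)}+t_{e'}+t_{e''}$ with $e',e''$ the other two edges of $T$.

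It remains to choose a point where every factor is nonzero. Taking all $t_e=1$ gives factors $-1+1+1=1$, so $\det M=1\neq 0$. This shows that $\det M$ is a nonzero polynomial. Hence $D\Phi$ has full row rank at that point, and generically, so edge area transferal holds.

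I expect no real obstacle here; the only care needed is bookkeeping. First, one should note that after the bi-Lipschitz reparametrization $t\mapsto t^2$ and scaling, the Jacobian rank is unchanged on the relevant compact region away from zero. Second, one should recall the preceding discussion: a nonvanishing polynomial determinant of $(D\Phi)(D\Phi)^T$ vanishes only on a null set, so positive measure of $\Delta^G(E)$ transfers. That discussion already justifies that full rank at a single point, even outside $\Delta^G(E)$, suffices.
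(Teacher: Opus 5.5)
Your proposal is correct and follows essentially the same argument as the paper. Both restrict to the columns of the exclusive edges; each such column has a single nonzero entry, in its own triangle's row, so the selected square minor is diagonal after reordering. The only difference is the evaluation point: the paper sets $t_{e(T)}=1$ and all other lengths to $0$, which makes those columns equal to $-e_T$, while your choice $t_e\equiv 1$ makes them $+e_T$. Either choice works.
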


\begin{proof}
    By our above discussion, we need to show that $D\Phi$ has linearly independent rows somewhere. For each triangle $T_j\in \mathcal{E}_2$, let $n(j)$ denote the index of the edge $\{v_j,w_j\}\subset T_j$ with the desirable property in the hypothesis of the proposition. Choose the input vector $\mathbf{t}$ whose $i$th component is given by $t_i=\begin{cases}
        1, & i=n(j)\text{ for some }j\in\{1,\ldots,|\mathcal{E}_2|\} \\
        0, & \text{otherwise}
    \end{cases}$. By construction, the $n(j)$th column of $D\Phi(\mathbf{t})$ is the negative of the $j$th standard basis vector; this gives $|\mathcal{E}_2|$ many linearly independent columns in $D\Phi(\mathbf{t})$. It follows immediately that the rank of $D\Phi(\mathbf{t})$ equals $|\mathcal{E}_2|$, which is the number of rows in that matrix.
\end{proof}
This allows us to already deduce some results for a number of families of graphs. Note that we can also formulate a pinned version of the above result with only minor modifications needed. We will say that an edge is pinned if both its vertices are in the pinned vertex set $\mathcal{P}\subset \mathcal{V}$. The basic idea is that we need the unshared edges of each triangle to be unpinned, i.e. each triangle has an edge shared with no other triangle, and at most one of the vertices of that edge is pinned.

\begin{prop}
    Let $G=(\mathcal{V},\mathcal{E},\mathcal{P})$ be a pinned simplicial complex with pin set $\mathcal{P}\subset\mathcal{V}$ which has the property that for every $T\in \mathcal{E}_2$, there exists an edge $\{v,w\}\subset T$ with the property that $\{v,w\}\not\subset \mathcal{P}$ and for all $2$-simplices $T'\ne T$, we have that $\{v,w\}\not\subset T'$. Then length-area transferal holds for $G$.
\end{prop}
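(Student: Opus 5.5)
In the pinned setting, the pinned vertices $x_1,\dots,x_m$ are fixed. Every edge with both endpoints in $\mathcal{P}$ then has a fixed length, and its coordinate is frozen rather than being a variable of the pinned distance set. The pinned length-to-area map $\Phi^{\mathcal{P}}$ is therefore a function only of the lengths $t_i$ of the \emph{unpinned} edges (those with at least one endpoint outside $\mathcal{P}$). The pinned lengths $p_\ell$ enter only as parameters. Transferal in this setting means: for the given values of the pinned lengths, $D\Phi^{\mathcal{P}}$ has full row rank $|\mathcal{E}_2|$ at some point in the unpinned variables. Once this holds, the argument preceding Proposition \ref{prop suff transfer} carries over verbatim. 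The determinant of $(D\Phi^{\mathcal{P}})(D\Phi^{\mathcal{P}})^T$ is a polynomial in the unpinned lengths that is not identically zero. Its zero set is Lebesgue-null, so we excise it from the positive-measure pinned distance set and conclude via the inverse function theorem on the remaining positive-measure piece.

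To exhibit the full-rank point, I would mimic the proof of Proposition \ref{prop suff transfer}. For each $T_j\in\mathcal{E}_2$, choose the exclusive unpinned edge $e_{n(j)}$ given by the hypothesis. The map $j\mapsto n(j)$ is injective by exclusivity. Consider the submatrix of $D\Phi^{\mathcal{P}}$ formed by the columns $n(1),\dots,n(|\mathcal{E}_2|)$. These columns correspond to unpinned variables, so they are genuinely columns of $D\Phi^{\mathcal{P}}$. By exclusivity, the variable $t_{n(j)}$ appears only in the $j$th row, so this square submatrix is diagonal. Its $j$th diagonal entry is $\partial_x\alpha = -x+y+z$, evaluated at $x=t_{n(j)}$ with $y,z$ the other two side (squared) lengths of $T_j$. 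Some of $y,z$ may be pinned constants $p_\ell$. The entry is $-t_{n(j)} + (\text{other two sides})$, a nonconstant affine function of $t_{n(j)}$. So the submatrix has determinant $\prod_j(-t_{n(j)}+y_j+z_j)$, a nonzero polynomial in the unpinned variables, whatever the fixed pinned values are.

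This already shows that the rank is full off a null algebraic set, even though the unpinned edges may be constrained. The only subtlety, which I expect to be the main point to check, is that the pinned parameters cannot be chosen freely (such as setting them to $0$ as in the unpinned proof). The argument above avoids this issue because each diagonal entry has coefficient $-1$ in its own free variable. It is therefore nonvanishing for generic values of the unpinned variables, uniformly in the pinned constants. The hypothesis $\{v,w\}\not\subset\mathcal{P}$ is used exactly to guarantee that $t_{n(j)}$ is a free variable. Without it, a triangle could have its exclusive edge frozen, and the corresponding diagonal entry could be forced to vanish.
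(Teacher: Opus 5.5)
Your proposal is correct and follows essentially the same route as the paper: you restrict to the columns of the exclusive unpinned edges, which by exclusivity form a diagonal $|\mathcal{E}_2|\times|\mathcal{E}_2|$ minor with diagonal entries $-t_{n(j)}+(\text{other two sides of }T_j)$. The only difference is in showing these entries are simultaneously nonzero: the paper picks an explicit point (each selected edge set to $3L$, with $L$ the largest frozen length, and all other unpinned edges set to $0$, so each diagonal entry is at most $-L<0$), whereas you note that their product is a nonzero polynomial because each entry has coefficient $-1$ in its own free variable, which works equally well and uniformly in the frozen values.
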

\begin{proof}
    The proof is essentially the same as that of Proposition \ref{prop suff transfer}. Any pinned edges are fixed to take some arbitrary constant value as the length, and the derivative matrix $D\Phi$ has correspondingly fewer columns as a result. Let $L$ denote the maximum value of the lengths of pinned edges and for each triangle $T_j\in \mathcal{E}_2$, let $n(j)$ denote the index of the edge $\{v_j,w_j\}\subset T_j$ with the desirable property in the hypothesis of the proposition. Choose the input vector $\mathbf{t}$ whose $i$th component is given by $t_i=\begin{cases}
        3L, & i=n(j)\text{ for some }j\in\{1,\ldots,|\mathcal{E}_2|\} \\
        0, & \text{otherwise}
    \end{cases}$.
    Note that entries of the derivative matrix are either $0$ or of the form $-t+s+r$ where $t,s,r$ are side lengths of the triangle corresponding to that row. Thus, by our construction, the $n(j)$th column is a nonzero multiple of the $j$th standard basis vector, so the derivative matrix has full row rank.
\end{proof}

We now illustrate the flexibility of this result with several examples. The first example is the case of the \textbf{fish graph}. Here, the relevant mapping for pinned vertices $x_1, x_2$ is
$$ (A_{\Delta})^{\text{fish}}_{x_1,x_2}(E) = \lbrace (\text{Area}(x_1,x_2,x_3),\text{Area}(x_2,x_3,x_4),\text{Area}(x_4,x_5,x_6)) : x_3,x_4,x_5,x_6\in E \rbrace .$$

\begin{figure}[h]
    \centering
\begin{tikzpicture}[
  v/.style = {circle, fill=black, inner sep=2.2pt},
  mv/.style = {circle, draw=magenta!70!black, fill=magenta, inner sep=2.2pt}
]
  \coordinate (x3) at (0,0);
  \coordinate (x1) at (2,1.2);
  \coordinate (x2) at (2,-1.2);
  \coordinate (x6) at (-2,1.2);
  \coordinate (x4) at (-2,-1.2);
  \coordinate (x5) at (-3.4,0);

  \draw[line width=0.8pt] (x1)--(x2)--(x3)--(x1);     
  \draw[line width=0.8pt] (x5)--(x6)--(x4)--(x5);     
  \draw[line width=0.8pt] (x6)--(x3)--(x4);           

  \node[v, label=right:$v_4$] at (x3) {};
  \node[v, label=above:$v_5$] at (x1) {};
  \node[v, label=below:$v_6$] at (x2) {};
  \node[mv,label=above:$v_2$] at (x6) {};
  \node[v, label=below:$v_3$] at (x4) {};
  \node[mv,label=left:$v_1$] at (x5) {};
\end{tikzpicture}
\caption{The fish graph with pins at $v_1$ and $v_2$}
\end{figure}

This relatively small example showcases triangles being attached along both edges and vertices, and our methods are able to deal with this without issue.

\begin{cor}\label{thm: fish}
Suppose $E\subset\mathbb{R}^d$, $d\geq 3$, is compact with $\dim(E)>(d+2)/2$. Then there exist points $x_1,x_2\in E$ such that
\[
\mathcal{L}^3((A_{\Delta})^{\text{fish}}_{x_1,x_2}(E))>0.
\]
\end{cor}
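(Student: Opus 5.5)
The plan is to combine two ingredients: a positive-measure result for the pinned fish distance graph, and the pinned length-area transferal proposition (Proposition \ref{prop: pinned suff transfer}). Concretely, I will do three things in order.

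\textbf{Step 1: check the transferal hypothesis.} The complex has vertex set $v_1,\dots,v_6$ and $2$-simplices
\[
T_1=\{v_1,v_2,v_3\},\quad T_2=\{v_2,v_3,v_4\},\quad T_3=\{v_4,v_5,v_6\},
\]
with pin set $\mathcal P=\{v_1,v_2\}$. The edges are the edges of these triangles. For each triangle I pick an edge that lies in no other triangle and is not pinned:
\begin{itemize}
\item for $T_1$, take $\{v_1,v_3\}$;
\item for $T_2$, take $\{v_3,v_4\}$ (or $\{v_2,v_4\}$);
\item for $T_3$, take $\{v_5,v_6\}$.
\end{itemize}
None of these is contained in $\mathcal P$, and each is exclusive to its triangle. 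So Proposition \ref{prop: pinned suff transfer} gives length-area transferal: $D\Phi$ has full row rank $3$ at some input. As in the discussion preceding Definition \ref{def: edgetransferal}, $\det(D\Phi D\Phi^T)$ is then a nonzero polynomial.

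\textbf{Step 2: positive measure of the pinned distance set.} I need that for many $(x_1,x_2)$ the pinned distance set
\[
\Delta^{G}_{x_1,x_2}(E)=\{(|x_i-x_j|)_{\{v_i,v_j\}\in\mathcal E_1,\ \{v_i,v_j\}\not\subset\mathcal P}\}
\]
has positive Lebesgue measure in $\R^{8}$. There are $9$ edges, and the pinned edge $\{v_1,v_2\}$ is held constant. The pinned fish graph is $2$-degenerate after pinning: order the free vertices $v_3,v_4,v_5,v_6$; each new vertex attaches to at most two earlier vertices ($v_3\to v_1,v_2$; $v_4\to v_2,v_3$; $v_5\to v_4$; $v_6\to v_4,v_5$). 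So it is $2$-admissible in the sense of \cite{BFOPRpaperI}, and the pinned distance-graph theorem there gives the result when $\dim E>\frac{d+2}{2}$ and $d\geq 3$.

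I expect this step to be the main obstacle: matching the exact pinned admissibility definition of \cite{BFOPRpaperI}, and making sure the pins can be taken from separated compact pieces $E_1,\dots,E_6$. If needed, I would instead redo the construction directly, Ou--Taylor style. Theorem \ref{thm:k starinIPPS} with $k=2$ gives the 2-stars at $v_3$ and $v_4$ (pinned at the two earlier points), and then a $1$-star for $v_5$ and a 2-star for $v_6$. These are glued by Fubini, using the $L^2$ pushforward bounds to keep positive measure at each step.

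\textbf{Step 3: transfer.} Fix a good pair $(x_1,x_2)$ and let $s=|x_1-x_2|$. Consider the map $\Phi$ from $\R^8$ to $\R^3$ given by the (rescaled squared) Heron formulas; the pinned length enters as the constant $s$. Because the vertices lie in separated compact sets, all lengths and areas are bounded above and below, so passing between areas and squared areas is bi-Lipschitz.

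The Jacobian condition from Step 1 was verified for some value of the pinned length. I need it for the actual $s$, so I will check that the witness input from the proposition's proof still works: place $3L$ on the chosen edges, with $L\ge s$, and $0$ elsewhere. Then $\det(D\Phi D\Phi^T)$ is a nonzero polynomial in the remaining variables. Its zero set $Z$ is Lebesgue-null, so $\Delta^G_{x_1,x_2}(E)\setminus Z$ still has positive measure. On that set $D\Phi$ is surjective, so by the coarea formula or the local submersion argument its image has positive $\mathcal L^3$ measure. That image is exactly $(A_\Delta)^{\text{fish}}_{x_1,x_2}(E)$ up to the bi-Lipschitz reparametrization, which proves the corollary.
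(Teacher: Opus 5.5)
Your proposal is correct and is essentially the paper's proof. In both, the pinned fish graph with the frozen edge $\{v_1,v_2\}$ deleted is $2$-admissible via the same dismantling order, so \cite{BFOPRpaperI} gives positive measure of the pinned distance set, and Proposition~\ref{prop: pinned suff transfer} with exclusive unpinned edges transfers this to areas (your choice $\{v_5,v_6\}$ for $T_3$ works as well as the paper's $\{v_4,v_5\}$). One small slip that does not affect the argument: the fish complex has $8$ edges, not $9$, so the free lengths live in $\R^7$ rather than $\R^8$.
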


\begin{proof}
First, notice that after deleting the pinned edge $\{v_1,v_2\}$, this is a $2$-admissible pinned graph in the sense of \cite{BFOPRpaperI}, via the dismantling order going in descending numerical order. This gives a dimensional threshold of $(d+2)/2$ for getting an abundance of area vectors. Next, notice that we can choose $\{v_1,v_3\},\{v_3,v_4\},\{v_4,v_5\}$ as unshared unpinned edges. The claim follows immediately by Proposition \ref{prop: pinned suff transfer}.
\end{proof}

\begin{rem}
  In the fish graph example above, we can choose to pin a couple more vertices. For example, say we pin all vertices except $v_3$ and $v_4$, that is, $\mathcal{P}=\{v_1,v_2,v_5,v_6\}$. Since the edges $\{v_1,v_2\}$ and $\{v_5,v_6\}$ are fixed, we consider the pinned graph obtained by deleting those pinned edges. One can check that such a pinned graph is $3$-admissible (by deleting vertex $v_3$ of degree $3$ to get a new graph where $v_4$ also has degree $3$) and we can choose $\{v_1,v_3\},\{v_3,v_4\},\text{ and }\{v_4,v_5\}$ as exclusive unpinned unshared edges, so, as long as $\dim(E)>\frac{d+3}{2}$, there exists $x_1,x_2,x_5,x_6$ such that 
  $$\mathcal{L}^3(\{(\A(x_1,x_2,x_3),\A(x_2,x_3,x_4),\A(x_4,x_5,x_6))\colon x_3,x_4\in E\})>0.$$
\end{rem}

The second example is the case of a \textbf{wheel graph}, where $W_k$ denotes the graph join of $K_1$ (the graph with one vertex) and $C_k$ (the $k$-cycle graph). Here, the relevant set (for a wheel graph pinned at the distinguished center vertex and one external vertex) is
\[
(A_{\Delta})^{W_{k}}_{y,x_k}(E)=\{\text{Area}(x_1,x_2,y),\ldots,\text{Area}(x_{k-1},x_k,y),\text{Area}(x_{k},x_1,y):x_1,\ldots,x_{k-1}\in E\}.
\]
\begin{figure}[h]
    \centering
\begin{tikzpicture}[scale=1.6, every node/.append style={scale=1.2}]
\coordinate[vtx,magenta] (w) at (0,0) node[below] {$y$};
\foreach \i/\col in {1/magenta,2/,3/,4/,5/,6/} {
\coordinate[vtx,\col] (v\i) at (360*\i/6+120:1);
\draw (w) -- (v\i);
}
\draw (v1) -- (v2) -- (v3) -- (v4) -- (v5) -- (v6) -- (v1);

\draw (v1)--(v2);
\draw (v1)--(v6);
\draw (v1)--(w);

\node[left] at (v1) {$x_k$};
\node[left] at (v2) {$x_1$};
\node[left] at (v6) {$x_{k-1}$};

\end{tikzpicture}
\caption{The wheel graph $W_k$, with $k=6$ and pins in $y$ and $x_k$}
\end{figure}
\begin{cor}\label{thm: wheel}
    Suppose $E\subset\mathbb{R}^d$ is compact with $\dim(E)>(d+3)/2$. Then for any $k\ge 3$, there exist points $x_k,y\in E$ such that
    \[
    \mathcal{L}^k((A_{\Delta})^{W_{k}}_{y,x_k}(E))>0
    \]
\end{cor}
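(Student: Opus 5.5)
Following the proof of Corollary \ref{thm: fish}, the argument has two ingredients. First, a positive-measure result for the pinned distance graph underlying $W_k$ with pins $y$ and $x_k$, at threshold $\frac{d+3}{2}$. Second, Proposition \ref{prop: pinned suff transfer}, which transfers that result to the area vector set. Implicitly we need $d\geq 4$, so that $3<d$ and the $3$-star input (Theorem \ref{thm:k starinIPPS}) is meaningful.

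\textbf{Step 1 (distance graph).} Delete the pinned edge $\{y,x_k\}$, whose length is a fixed constant. The remaining pinned graph has free vertices $x_1,\dots,x_{k-1}$. Each free vertex $x_i$ is adjacent to $y$, $x_{i-1}$ and $x_{i+1}$ (indices taken in the cycle, with $x_0=x_k$). We use the dismantling order $x_{k-1},x_{k-2},\dots,x_1$.
\begin{itemize}
\item When $x_{k-1}$ is removed, its neighbours are $y$, $x_k$ and $x_{k-2}$, so it has degree $3$.
\item After that removal, each subsequent $x_i$ has neighbours only among $y$, $x_{i-1}$ and the already-removed-side vertices, so its degree is at most $3$. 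At the end, $x_1$ is adjacent to $y$, $x_k$ and nothing else.
\end{itemize}
Hence the graph is $3$-admissible in the sense of \cite{BFOPRpaperI}. The pinned graph theorem of that paper, which is built inductively from $3$-stars via Theorem \ref{thm:k starinIPPS}, then gives pins $y,x_k\in E$ for which the pinned distance set has positive Lebesgue measure in $\mathbb{R}^{2k-1}$. Here $2k-1$ counts the $k$ spokes $\{y,x_i\}$ with $i<k$ together with the $k$ rim edges. This holds provided $\dim(E)>\frac{d+3}{2}$. This step is the main obstacle. We must check carefully that the definition of $k$-admissibility in \cite{BFOPRpaperI} permits two pins adjacent to many free vertices, and that pins may be taken from separated pieces $E_i$ of $E$ so that all triangles are nondegenerate. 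If the pinned framework requires it, the free vertices will be placed in pairwise separated compact subsets.

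\textbf{Step 2 (exclusive unpinned edges).} The $k$ triangles are $T_i=\{x_i,x_{i+1},y\}$ for $1\le i\le k$, with indices mod $k$. For each $T_i$ we select its rim edge $\{x_i,x_{i+1}\}$.
\begin{itemize}
\item Each rim edge lies in exactly one triangle of the wheel, since the other triangles containing $x_i$ or $x_{i+1}$ use a different rim edge together with spokes.
\item No rim edge is pinned, because the only pinned edge is the spoke $\{y,x_k\}$. In particular the rim edges $\{x_{k-1},x_k\}$ and $\{x_k,x_1\}$ each have exactly one pinned endpoint.
\end{itemize}
Therefore the hypothesis of Proposition \ref{prop: pinned suff transfer} holds, and length-area transferal holds for the pinned wheel.

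\textbf{Conclusion.} Transferal means $D\Phi$ has full row rank $k$ outside a null algebraic variety. As in the discussion preceding Definition \ref{def: edgetransferal}, we excise that variety from the positive-measure distance set and apply $\Phi$ with the pinned length fixed. The image then has positive $\mathcal{L}^k$ measure. The squared-area normalization is harmless, because the triangle areas stay in a compact range bounded away from $0$.
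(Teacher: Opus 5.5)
Your proposal is correct and is essentially the paper's own argument: delete the frozen spoke $\{y,x_k\}$, observe that the resulting pinned graph is $3$-admissible (your dismantling order $x_{k-1},\dots,x_1$ works), and apply Proposition~\ref{prop: pinned suff transfer} with the rim edges $\{x_i,x_{i+1}\}$ as the unshared, unpinned edges. Two cosmetic points: there are $k-1$ free spokes (not $k$), which together with the $k$ rim edges gives your count of $2k-1$; and no separate assumption $d\geq 4$ is needed, because for $d\leq 3$ the hypothesis $\dim(E)>\frac{d+3}{2}\geq d$ can never hold, so the statement is vacuous there.
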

\begin{proof}
   The pinned wheel graph is $3$-admissible (after deleting the pinned edge). For each triangle, its non-spoke edge is unpinned and unshared, so Proposition \ref{prop: pinned suff transfer} applies.
\end{proof}

One may ask about other graphs that result in cycles of triangles; one such example is the circumscribed flower graph given below:

\begin{figure}[h]
    \centering
\begin{tikzpicture}[scale=1]

\foreach \i in {1,2,3,4} {
    \coordinate (v\i) at (90*\i+45:1);
    \coordinate (w\i) at (90*\i+90:2.3); 
}

\draw (v1) -- (w1) -- (v2) -- (w2) -- (v3) -- (w3) -- (v4) -- (w4) -- (v1);
\draw (v1) -- (v2) -- (v3) -- (v4) -- (v1);
\draw (w1) -- (w2) -- (w3) -- (w4) -- (w1);

\foreach \i in {2,3,4} {
    \fill[black] (v\i) circle (2.5pt);
    \fill[black] (w\i) circle (2.5pt);
}
\fill[magenta] (v1) circle (2.5pt);
\fill[magenta] (w1) circle (2.5pt);

\node[above left, color=magenta]  at (v1) {$v_1$};
\node[below left]  at (v2) {$v_2$};
\node[below right] at (v3) {$v_3$};
\node[above right] at (v4) {$v_4$};

\node[left, color=magenta]  at (w1) {$w_1$};
\node[below] at (w2) {$w_2$};
\node[right] at (w3) {$w_3$};
\node[above] at (w4) {$w_4$};

\end{tikzpicture}
\caption{The graph $CF_4$}
\end{figure}

More formally, the circumscribed flower graph with $k$ petals, call it $CF_k$, is given by starting with a $k$-cycle $C_k$, then introducing $k$ more vertices, each of which is connected to the vertices of a distinct edge in the original $k$-cycle, and then finally adding edges between the $k$ added vertices to form another $k$-cycle. When $k\ge 4$, this produces a planar graph with $2k$ triangles. Note that $CF_k$ is $4$-regular (and so $4$-admissible), which implies that $(d+4)/2$ should be the right threshold obtained from our methods. For simplicity, we will assume that we pin at the vertices $w_1$ and $v_1$, and we will label the vertices in the outer $k$-cycle with $w$'s (or $y$'s for their realizations in Euclidean space) and the vertices in the inner $k$-cycle with $v$'s (or $x$'s for their realizations in Euclidean space).
\begin{align*}
(A_{\Delta})_{x_1,y_1}^{CF_k}(E)&=\{\big(\text{Area}(x_1,x_2,y_1),\text{Area}(x_2,x_3,y_2),\ldots,\text{Area}(x_k,x_1,y_k),\\&\text{Area}(y_1,y_2,x_2),\text{Area}(y_2,y_3,x_3),\ldots,\text{Area}(y_k,y_1,x_1)\big):x_2,\ldots,x_k,y_2,\ldots,y_k\in E\}
\end{align*}
\begin{cor}
    Let $E\subset \mathbb{R}^d$ be a compact subset with $\dim(E)>(d+4)/2$. Then there exist $y_1,x_1\in E$ such that
    \[
    \mathcal{L}^{2k}((A_{\Delta})_{x_1,y_1}^{CF_k}(E))>0.
    \]
\end{cor}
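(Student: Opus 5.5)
The plan mirrors the proofs of Corollary \ref{thm: fish} and Corollary \ref{thm: wheel}. There are two independent inputs. The first is a Falconer-type positive measure result for the pinned distance graph underlying $CF_k$, with the pinned edge $\{v_1,w_1\}$ deleted, at the threshold $\frac{d+4}{2}$. The second is length-area transferal for the pinned $2$-simplicial complex, which we get from Proposition \ref{prop: pinned suff transfer}. Combining them through the Jacobian argument preceding Definition \ref{def: edgetransferal} yields positive $2k$-dimensional measure of the pinned area set. Squaring areas does not matter, since $t\mapsto t^2$ is bi-Lipschitz on compact sets away from $0$.

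\textbf{Step 1: the distance graph.} Delete the pinned edge $\{v_1,w_1\}$ from $CF_k$. Every vertex of $CF_k$ has degree $4$: $v_i$ is adjacent to $v_{i\pm1}$, $w_{i-1}$ and $w_i$, and $w_i$ is adjacent to $w_{i\pm1}$, $v_i$ and $v_{i+1}$. So every induced subgraph has a vertex of degree at most $4$, and we can dismantle the graph one unpinned vertex at a time, each with at most $4$ neighbors. Concretely, remove $v_2,\dots,v_k,w_2,\dots,w_k$ in some order, ending with the two pins. This shows the pinned graph is $4$-admissible in the sense of \cite{BFOPRpaperI}. The results of \cite{BFOPRpaperI} then give $x_1,y_1\in E$ such that the pinned distance set has positive Lebesgue measure, provided $\dim(E)>\frac{d+4}{2}$ and $d>4$. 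Note that if $d\le 4$ the hypothesis $\dim(E)>\frac{d+4}{2}\geq d$ is vacuous. The one thing to check is that the admissibility definition of \cite{BFOPRpaperI} accepts this order; regularity should make it automatic.

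\textbf{Step 2: exclusive unpinned edges.} For each of the $2k$ triangles we need an edge that is unpinned and lies in no other triangle.
\begin{itemize}
\item For the triangle $\{x_i,x_{i+1},y_i\}$, take the inner-cycle edge $\{v_i,v_{i+1}\}$.
\item For the triangle $\{y_i,y_{i+1},x_{i+1}\}$, take the outer-cycle edge $\{w_i,w_{i+1}\}$.
\end{itemize}
For $k\ge4$, each cycle edge lies in exactly one listed $2$-simplex. The triangles present are only the petals, since the inner and outer cycles have no chords. Spoke edges such as $\{v_{i+1},w_i\}$ are shared by two triangles, so we avoid them.

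None of the chosen edges is pinned, because the only pinned edge is $\{v_1,w_1\}$, which is a spoke. Hence Proposition \ref{prop: pinned suff transfer} applies: $D\Phi$ has full row rank $2k$ at some input, and therefore off a null algebraic variety. Excising that variety from the positive measure distance set and applying the local bi-Lipschitz argument shows that $\Phi$ maps it onto a set of positive $\mathcal{L}^{2k}$ measure. That image is the pinned area set $(A_{\Delta})_{x_1,y_1}^{CF_k}(E)$.

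\textbf{Main obstacle.} The likely sticking point is Step 1, namely matching the exact pinned $k$-admissibility definition from \cite{BFOPRpaperI}, including separation of the pieces $E_i$ and pin abundance. Step 2 is purely combinatorial.
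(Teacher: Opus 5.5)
Your proposal is correct and is essentially the paper's proof. After deleting the pinned spoke $\{v_1,w_1\}$, $4$-regularity gives $4$-admissibility in the sense of \cite{BFOPRpaperI}, and the cycle edges $\{v_i,v_{i+1}\}$ and $\{w_i,w_{i+1}\}$ are exclusive unpinned edges, so Proposition \ref{prop: pinned suff transfer} transfers positive measure from the pinned distance set to the pinned area set. One small imprecision is calling the last step ``local bi-Lipschitz'': $\Phi$ goes from $\mathbb{R}^{4k-1}$ to $\mathbb{R}^{2k}$ and is only a submersion off the variety. So either argue directly that submersions send positive-measure sets to positive-measure sets, or pad $\Phi$ with extra coordinates to a local diffeomorphism and then project, as in the proof of Theorem \ref{thm: k pinned fans}.
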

\begin{proof}
    The graph is $4$-regular, hence $4$-admissible. Selecting edges of the form $(v_i,v_{i+1})$ and $(w_i,w_{i+1})$ for each $i$ (where we interpret $k+1=1$) gives an unshared unpinned edge for each triangle, and then the claim follows by Proposition \ref{prop: pinned suff transfer}.
\end{proof}

While the conditions of Propositions \ref{prop suff transfer} and \ref{prop: pinned suff transfer} are sufficient for getting a dimensional threshold for many examples, they are known not to be necessary. Consider the following graph $P$ pictured below:
\begin{figure}[h]
    \centering
\begin{tikzpicture}
  \coordinate (v1) at (0,0);
  \coordinate (v2) at (2,0);
  \coordinate (v3) at (4,0);
  
  \coordinate (v4) at (1, {sqrt(3)});
  \coordinate (v5) at (3, {sqrt(3)});
  
  \coordinate (v6) at (2, {2*sqrt(3)});

  \draw[thick] (v1) -- node[below] {$t_1$} (v2);
  \draw[thick] (v2) -- node[below] {$t_2$} (v3);
  \draw[thick] (v3) -- node[below right] {$\,\,t_3$} (v5);
  \draw[thick] (v5) -- node[above right] {$t_4$} (v6);
  \draw[thick] (v6) -- node[above left] {$t_5$} (v4);
  \draw[thick] (v4) -- node[below left] {$t_6\,\,$} (v1);
  
  \draw[thick] (v2) -- node[left] {$t_7$} (v4);
  \draw[thick] (v4) -- node[above] {$t_8$} (v5);
  \draw[thick] (v5) -- node[right] {$t_9$} (v2);

  \filldraw (v1) circle (2.5pt) node[below left] {$v_1$};
  \filldraw (v2) circle (2.5pt) node[below=4pt] {$v_2$};
  \filldraw (v3) circle (2.5pt) node[below right] {$v_3$};
  \filldraw (v4) circle (2.5pt) node[left=3pt] {$v_4$};
  \filldraw (v5) circle (2.5pt) node[right=3pt] {$v_5$};
  \filldraw (v6) circle (2.5pt) node[above=4pt] {$v_6$};
\end{tikzpicture}
\caption{The graph $P$; its central triangle has no unshared edges, so Proposition \ref{prop: pinned suff transfer} does not apply.} \label{fig:pyramidof4triangles}
\end{figure}

If we enumerate the triangles in the order $(t_1,t_6,t_7),(t_2,t_3,t_9),(t_4,t_5,t_8),(t_7,t_8,t_9)$, then we can readily compute the derivative matrix of the length-to-area map to be:
\[
D\Phi(t)=\scalebox{0.65}{$\begin{bmatrix}
    -t_1+t_6+t_7 & 0 & 0 & 0 & 0 & t_1-t_6+t_7 & t_1+t_6-t_7 & 0 & 0 \\
    0 & -t_2+t_3+t_9 & t_2-t_3+t_9 & 0 & 0 & 0 & 0 & 0 & t_2+t_3-t_9 \\
    0 & 0 & 0 & -t_4+t_5+t_8 & t_4-t_5+t_8 & 0 & 0 & t_4+t_5-t_8 & 0 \\
    0 & 0 & 0 & 0 & 0 & 0 &-t_7+t_8+t_9 & t_7-t_8+t_9 & t_7+t_8-t_9
 \end{bmatrix}$}
\]

This can easily be seen to be full rank for a generic input; one approach to get a specific set of values that works is to consider columns $1,2,4,7$ and set $t_1=2,t_2=1,t_4=1,t_7=1$ and all other variables to be $0$. Indeed, these variables correspond to choosing one edge from each of the triangles in the graph.

In fact, there is a conjectured equivalent characterization of edge area transferal condition due to \cite{LNPR} (see Conjecture 12). We'll introduce some terminology to explain the condition and formulate it in the language of our paper.

\begin{defn}\label{def:triangleedgechoice}
    Given a simplicial complex $G=(\mathcal{V},\mathcal{E})$, we say that a map $f:\mathcal{E}_2\rightarrow \mathcal{E}_1$ is a \textbf{triangle edge choice function} if $f(T)\subset T$ for every $T\in\mathcal{E}_2$.
\end{defn}

\begin{conj}
    Let $G=(\mathcal{V},\mathcal{E})$ be a simplicial complex. Then length-area transferal  holds for $G$ if and only if there exists an injective triangle edge choice function.
\end{conj}

In fact, this can be seen to be necessary by Hall's Marriage Theorem (see \cite{HallSource}, for instance). That theorem says in our setting that the existence of an injective triangle edge choice function is equivalent to the property that there does not exist any subcollection of triangles in the simplicial complex with the property that the subcomplex induced by those triangles has more triangles than edges. Hence, if there is not an injective triangle edge choice function, then we can find a subcomplex of $G$, call it $H$, with the property that $H$ has more triangles than edges. Restricting the derivative matrix to only the rows corresponding to triangles in $H$, we see that this submatrix has more rows than columns, implying the rows are linearly dependent. Thus, the entire derivative matrix has linearly dependent rows and cannot be full rank. We refer the reader to Example 11 in \cite{LNPR} for an illuminating example relating to this.

In the unpinned setting, we are able to get closer to the conjecture by proving Theorem \ref{thm: mainJacobianthm} which we restate below. Observe, in particular, that the statement below can take care of the example in Figure \ref{fig:pyramidof4triangles}, which was not contemplated by the Proposition \ref{prop suff transfer}.

\begin{prop}\label{restatemainthmtoprove}
    Let $G=(\mathcal{V},\mathcal{E})$ be a $2$-dimensional simplicial complex. If there exists an injective triangle edge choice function $f$ with the property that no triangle has all its edges chosen by $f$, then length-area transferal holds for $G$.
\end{prop}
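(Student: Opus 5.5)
We will use the reduction already made in this section: it suffices to find one input $\mathbf t\in\R^{|\mathcal E_1|}$ at which $D\Phi(\mathbf t)$ has full row rank $|\mathcal E_2|$. It is enough to exhibit a nonsingular $|\mathcal E_2|\times|\mathcal E_2|$ minor. The natural one is the minor $M(\mathbf t)$ whose columns are the chosen edges $f(T)$, $T\in\mathcal E_2$; injectivity of $f$ makes it square. Since $\det M(\mathbf t)$ is a polynomial in $\mathbf t$, we only need to show it is not the zero polynomial.

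\textbf{Step 1: the entries of $M$.} Using $\nabla\alpha(x,y,z)=(-x+y+z,\,x-y+z,\,x+y-z)$, the entry in row $T$ and column $f(T')$ is zero unless $f(T')\subset T$. When $f(T')\subset T$, the entry equals $P_T-2t_{f(T')}$, where $P_T$ is the sum of the three side lengths of $T$. By hypothesis each $T$ contains at most two chosen edges, so each row has at most two nonzero entries. One of them is always the diagonal entry $(T,f(T))$. Call $T$ \emph{double} if it contains a second chosen edge $g(T)=f(T')$ with $T'\neq T$. The hypothesis also guarantees that every $T$ has at least one unchosen edge $u(T)$.

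\textbf{Step 2: the specialization.} The earlier choice of putting $t_e=0$ on unchosen edges fails here. Every double row then becomes a multiple of $e_{f(T)}-e_{g(T)}$, and cycles in the associated "chosen-edge graph" kill the rank. Instead I would send the chosen lengths to $0$ and keep the unchosen ones positive, expanding in a small parameter. Concretely, set $t_{u}=1$ on every unchosen edge and $t_{f(T)}=\varepsilon\,c_T$ with distinct constants $c_T$. Row $T$ is then
\[
\bigl(P_T-2t_{f(T)}\bigr)e_{f(T)}+\bigl(P_T-2t_{g(T)}\bigr)e_{g(T)}.
\]
Here $P_T=m_T+O(\varepsilon)$, where $m_T\in\{1,2\}$ is the number of unchosen edges of $T$; the second term is present only for double $T$. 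This reduces the problem to a combinatorial rank question for a matrix $N$ whose rows have at most two nonzero entries. The question is whether the leading order matrix, or failing that the first-order correction in $\varepsilon$, is nonsingular.

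\textbf{Step 3 (main obstacle).} At order $\varepsilon^0$, the double rows are $e_{f(T)}+e_{g(T)}$, since a double triangle has exactly one unchosen edge and so $m_T=1$. Single rows are $m_Te_{f(T)}$. Consider the graph whose vertices are triangles and whose edges join $T$ to $T'$ when $g(T)=f(T')$. Each connected component containing a single triangle is then handled by back-substitution, because a single row pins down its own coordinate. A component made only of double rows has as many rows as vertices, so it is a functional graph; each such component contains exactly one cycle. The leading matrix is singular on such a component exactly when that cycle has even length.

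To resolve this I would try two things, in order. First, perturb the unchosen lengths as well, taking $t_{u(T)}=1+\delta_T$ with independent $\delta_T$. On a pure double cycle the row becomes $(1+\delta_T)(e_{f(T)}+e_{g(T)})+O(\varepsilon)$, and the $\varepsilon$-terms contribute $-2\varepsilon c_{T}$ on the diagonal and $-2\varepsilon c_{T'}$ off the diagonal. The determinant of a cycle with this structure is $\prod_T a_T\pm\prod_T b_T$, where $a_T,b_T$ are the two row entries. With $a_T=1-2\varepsilon c_T+\dots$ and $b_T=1-2\varepsilon c_{T'}+\dots$, the two products differ already at first order in $\varepsilon$ whenever the $c_T$ are distinct. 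So on each even cycle the determinant is a nonzero multiple of $\varepsilon$ plus higher order terms. Second, as a fallback, I would verify directly that the coefficient of the monomial $\prod_{T\in\text{cycle}}t_{f(T)}$ in the cycle determinant is nonzero. The matrix is block triangular over the components of this graph, so the full determinant is the product of the component determinants, each shown nonzero. This gives full row rank at some input, hence length-area transferal.
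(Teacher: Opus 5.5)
Your reduction and Step 1 are correct. So is the decomposition of the graph $T\to T'$ (when $g(T)=f(T')$) into in-trees rooted at single triangles and unicyclic components of double triangles. Trees and odd cycles are indeed nonsingular at order $\varepsilon^0$, and this is a cleaner organization of the structure the paper extracts with its ordering algorithm. There is, however, a genuine gap in the even-cycle case. That case is the whole difficulty (it is exactly what the paper's cyclic blocks address), and neither of your two resolutions works as written.

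\textbf{First route.} Write the cycle as $T_1\to\cdots\to T_L\to T_1$ ($L\geq 4$ even), with $c_j=c_{T_j}$ and $w_j=t_{u(T_j)}$. Since $P_{T_j}=w_j+\varepsilon(c_j+c_{j+1})$, the two entries of row $j$ are $a_j=w_j+\varepsilon(c_{j+1}-c_j)$ and $b_j=w_j-\varepsilon(c_{j+1}-c_j)$.
\begin{itemize}
\item Even with your own expansions $a_T=1-2\varepsilon c_T+\cdots$ and $b_T=1-2\varepsilon c_{T'}+\cdots$, the first-order coefficients of $\prod a_T$ and $\prod b_T$ are $-2\sum_T c_T$ and $-2\sum_T c_{T'}$. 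These are equal because $T\mapsto T'$ permutes the cycle.
\item So distinctness of the $c_T$ buys nothing: with $w\equiv 1$ the linear term of $\prod a_j-\prod b_j$ vanishes by telescoping.
\item The true first-order coefficient is $2\prod_k w_k\sum_j (c_{j+1}-c_j)/w_j$. This is a nonzero linear form in $c$ only if the $w_j$ vary along the cycle.
\item You cannot simply take the $\delta_T$ independent, because distinct triangles may share their unchosen edge.
\item The missing observation is that $u(T_j)\neq u(T_{j+1})$; otherwise $T_j$ and $T_{j+1}$ would share the two edges $f(T_{j+1})$ and $u(T_j)$. Giving distinct unchosen edges distinct lengths then makes the coefficient of each $c_j$, namely $1/w_{j-1}-1/w_j$, nonzero, so generic $c$ works.
\item Alternatively, at $w\equiv 1$ you can go to order $\varepsilon^3$, whose coefficient is $\tfrac23\sum_j (c_{j+1}-c_j)^3\not\equiv 0$.
\end{itemize}

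\textbf{Second route (fallback).} This one fails outright. The monomial $\prod_{T}t_{f(T)}$ contains no unchosen variable, so its coefficient in the cycle determinant equals its coefficient after setting every $t_u=0$. There $b_j=-a_j$, so $\prod a_j-\prod b_j\equiv 0$. This is precisely the Step 2 degeneracy you yourself identified, and the coefficient is $0$.

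\textbf{How the paper handles it.} The paper avoids asymptotics entirely. It imposes the exact relation $t_{f(T_L)}=t_{f(T_1)}-t_{u(T_L)}$, which kills the back-entry $b_L$. The cycle block becomes triangular, with last diagonal entry $2t_{u(T_L)}$ and previous one $t_{f(T_1)}-t_{f(T_{L-1})}+t_{u(T_{L-1})}-t_{u(T_L)}$, a nonzero linear form since $f(T_1)\neq f(T_{L-1})$. The paper then takes algebraically independent values for the free variables. This treats cycles of either parity uniformly, so your parity split is not needed.
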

\begin{proof}
    The basic idea is to consider the square submatrix obtained by selecting only the columns of $D\Phi$ corresponding to the edges that are chosen by $f$. The condition that no triangle has all its edges chosen will then imply that each row of this submatrix has at most $2$ nonzero entries, and choosing the selected input variables in a clever way will allow us to guarantee that we can get the matrix $D\Phi$ into a triangular form with nonzero diagonal entries.

    More precisely, we will describe an algorithm to order the triangles and edges that will put $D\Phi$ in a simple form. Suppose we have already chosen triangles $F_1,\ldots, F_k$ and let $t_1,\ldots, t_k$ correspond to the edges $f(F_1),\ldots, f(F_k)$, respectively. If $F_k$ has a second edge that was selected by $f$ and this edge is not equal to any of $f(F_1),\ldots, f(F_k)$, then select $F_{k+1}$ to be the triangle such that $f(F_{k+1})$ corresponds to that edge. Otherwise, then choose $F_{k+1}$ to be any unselected triangle. In either case, let $t_{k+1}$ correspond to the edge $f(F_{k+1})$. Let $l$ denote the total number of triangles, and consider the first $l$ columns of $D\Phi$. We claim that for a certain choice of inputs, the first $l$ columns of $D\Phi(t)$ form an invertible matrix, which we will call $M$.

    By construction, we have that the $(i,j)$th entry of $M$ is $0$ whenever $j>i+1$. We can relabel the selected variables $t_i$ as $t_i^B$ to keep track of which block they were selected in, and then recognize that $M$ is a block lower triangular matrix whose diagonal block structure corresponds to cycles of the form
    \[
    B=\begin{bmatrix}
        -t_1^B+t_2^B+u_1^B & t_1^B-t_2^B+u_1^B & 0 & 0 & 0\\
        0 & -t_2^B+t_3^B+u_2^B & t_2^B-t_3^B+u_2^B & 0  & 0\\
        \vdots & \ddots & \ddots & \ddots & \vdots \\
        0 & 0 & \ldots & -t_{r-1}^B+t_r^B+u_{r-1}^B & t_{r-1}^B-t_r^B+u_{r-1}^B \\
        * & * & \ldots & * & -t_r^B+?+u_r^B
    \end{bmatrix}
    \]
    where the bottom row could either have the case where each $*$ entry is $0$ and the last entry is $-t_r^B+u_r^B+(u_r')^B$, or there is a block $B'$ that may or not be the same as $B$ such that some $*$ entry takes the form $-t_i^{B'}+t_r^B+u_r^B$ and the last entry is $t_i^{B'}-t_r^B+u_r^B$ for $t_i^{B'}\neq t_{r-1}^B$ (notice that $t_{i}^{B'}=t_{r-1}^{B}$ is impossible since it would imply that the triangles corresponding to the $(r-1)$st and $r$th row of $B$ have two common edges in $G$, but every two distinct triangles share at most one edge). Here, we are using the notation $u_j^B$ to denote the unselected edge of the $j$th triangle in the block $B$, and $(u_r')^{B}$ in the event that the triangle that selects $t_r^B$ has two unselected edges. Note that while all the $t_i^B$ are distinct, that is not the case for the variables $u_i^B$,  and it could be that $u_i^B=u_{j}^{B'}$ for $(i,B)\neq (j,B')$. 
    
    $M$ is invertible if and only if each of its diagonal blocks is invertible, so it suffices to make all the blocks $B$ invertible. We first put all the blocks $B$ in triangular form. In the first case above, where the last row has only one nonzero entry $-t_r^B+u_r^B+(u_r')^B$, $B$ is already upper triangular, so we leave it as it is. In the second case, in the row corresponding to the triangle $F_r^B$ such that $f(F_r^B)=t_r^B$, there is another nonzero entry $-t_i^{B'}+t_r^B+u_r^B$. The variable $t_i^{B'}$ could, in principle, have been chosen by a different block $B'\neq B$. If that is the case, the last row of block $B$ has only one non-zero entry, and $ B$ is already upper triangular. In the case that $t_i^{B'}=t_i^{B}$, that is, it was chosen by block $B$, then $1\leq i<r-1$ we make $B$ upper triangular by forcing the entry $-t_i^B+t_r^B+u_r^B$ to be equal to $0$, by breaking the algebraic independence and by setting $t_r^B=t_i^B-u_r^B$. 

After carrying out this triangularization for every diagonal block, we obtain
a collection of relations of the form
\[
t_r^B=t_i^B-u_r^B
\]
for some of the selected variables. Let \(\mathcal F\) be the set of all
underlying edge variables which remain free after imposing these relations.
Here we list each underlying edge only once. Thus \(\mathcal F\) contains all
selected variables \(t_j^B\) except the variables \(t_r^B\) which have been
defined by one of the above relations, together with all unselected edge
variables which occur in the blocks, again listed without repetition.

With these choices, each diagonal block is upper triangular and every diagonal entry is nonzero. Indeed, each such diagonal entry is a
linear polynomial with integer coefficients in the variables from
\(\mathcal F\). Moreover, this linear polynomial is not identically zero. For
the rows not affected by the triangularization relation, this follows from
the fact that the three edges of a triangle are distinct, and from the
injectivity of the choice function. For a row affected by the relation
\(t_r^B=t_i^B-u_r^B\), the last diagonal entry becomes
\[
2u_r^B,
\]
while the preceding diagonal entry involving \(t_r^B\) becomes
\[
t_i^B-t_{r-1}^B+u_{r-1}^B-u_r^B.
\]
This is again a nonzero linear polynomial in the free variables: the edges
\(t_i^B\) and \(t_{r-1}^B\) are distinct selected edges, and the edges
\(u_{r-1}^B\) and \(u_r^B\) are distinct unselected edges, since otherwise the
two corresponding triangles would share two edges.

After all substitutions have
been made, each diagonal entry is a linear nonzero polynomial with integer coefficients
in the variables from \(\mathcal F\), and we can now choose values for those variables that are algebraically
independent over \(\mathbb Q\), so no nonzero polynomial with rational
coefficients in these variables can vanish. In particular, none of the
nonzero linear polynomials appearing on the diagonal vanishes. Hence, every
diagonal block is invertible. Since the full selected minor is block lower
triangular with these diagonal blocks, the selected minor has a nonzero
determinant. Therefore \(D\Phi\) has full row rank for this choice of input
variables, and length-area transferal holds for \(G\).

\end{proof}

We will now present a few more ad hoc observations that may be of interest while the general equivalent condition is out of reach. In the pinned setting, recall that we allow ourselves to pin both vertices of an edge, so long as we never pin all three vertices of a triangle whose area is under consideration. In such a setting, we think of any side length whose vertices are both pinned to be ``frozen" to some fixed positive value which we cannot control. One possible cause for concern is whether we could get extremely unlucky and freeze side lengths in such a way that the relevant Jacobian is never full rank, even if for generic choices of frozen side lengths, the Jacobian is full rank. In the setting of Proposition \ref{prop: pinned suff transfer}, this was never an issue due to being able to choose the unpinned sides to be sufficiently large. We illustrate another approach with an example that is not addressed by Proposition \ref{prop: pinned suff transfer}.

Consider the complete graph $K_4$ with three of its vertices pinned so that the relevant pinned area set becomes 
$$(A_{\Delta})^G_{x_1,x_2,x_3}(E)=\{\A(x_1,x_3,x_4),\A(x_2,x_3,x_4),\A(x_1,x_2,x_4)\colon x_4\in E\}.$$
We want to know if we can transfer positive length results from the three unpinned edges to the three unpinned triangle areas in $K_4$.

\begin{figure}[ht]
\centering
\begin{tikzpicture}[
    vertex/.style={circle, fill=black, inner sep=2.2pt},
    pin/.style={circle, fill=magenta, inner sep=2.4pt},
    edge/.style={line width=0.9pt},
    lab/.style={font=\small}
]

\coordinate (v1) at (0,0);
\coordinate (v2) at (2.2,0);
\coordinate (v3) at (1.1,1.15);
\coordinate (v4) at (1.1,2.15);

\fill[magenta!18] (v1) -- (v2) -- (v3) -- cycle;

\draw[edge] (v1)--(v2)--(v3)--(v1);
\draw[edge] (v4)--(v1);
\draw[edge] (v4)--(v2);
\draw[edge] (v4)--(v3);

\node[pin,label={[lab]below left:$v_1$}] at (v1) {};
\node[pin,label={[lab]below right:$v_2$}] at (v2) {};

\node[pin] at (v3) {};
\node[lab, anchor=west] at ($(v3)$) {$v_3$};
\node[vertex,label={[lab]above:$v_4$}] at (v4) {};


\begin{scope}[xshift=4.8cm]
    \coordinate (v1s) at (0,0);
    \coordinate (v2s) at (2.2,0);
    \coordinate (v3s) at (1.1,1.15);
    \coordinate (v4s) at (1.1,2.15);

    \draw[edge] (v4s)--(v1s);
    \draw[edge] (v4s)--(v2s);
    \draw[edge] (v4s)--(v3s);

    \node[pin,label={[lab]below left:$v_1$}] at (v1s) {};
    \node[pin,label={[lab]below right:$v_2$}] at (v2s) {};
\node[pin] at (v3s) {};
\node[lab, anchor=west] at ($(v3s)$) {$v_3$};
    \node[vertex,label={[lab]above:$v_4$}] at (v4s) {};

\end{scope}

\end{tikzpicture}

\caption{$K_4$ with $3$ of its vertices pinned, and the associated $3$-pinned star obtained by deleting the frozen edges and the fully pinned triangle ($2$-edge in magenta). }
\label{fig:pinned-k4-star}
\end{figure}
Using $t_1,t_2,t_3$ to denote the unfrozen edge lengths and $s_1,s_2,s_3$ to denote the frozen side lengths, we compute the relevant Jacobian to be
\[
\begin{bmatrix}
    -t_1+t_2+s_1 & t_1-t_2+s_1 & 0 \\
    0 & -t_2+t_3+s_2 & t_2-t_3+s_2 \\
    -t_1+t_3+s_3 & 0 & t_1-t_3+s_3
\end{bmatrix}
\]
Given $s_1,s_2,s_3>0$ we want to show that we can always find $t_1,t_2,t_3$ making this matrix full rank. In this case, this is easy to do; choose $t_3=t_1-s_3$ and it becomes upper triangular; then for generic choices of $t_1,t_2$, we see that all the diagonal entries are nonzero. In fact, this argument holds more generally for any wheel graph whose outer $k$-cycle of vertices are all pinned.

\begin{rem}\label{rem:remarkaboutcompletegraph}
For the unpinned complete graph \(K_n\) in $n$ vertices, there is an immediate rank obstruction
when \(n\geq 6\). Indeed, the number of edge variables is
$\binom{n}{2}$,
whereas the number of triangles is
$\binom{n}{3}$.
For \(n\geq 6\), one has $\binom{n}{2}<\binom{n}{3}.$ Thus the Jacobian of the length-to-area map cannot have full row rank, and length-area transferal cannot hold for \(K_n\).

The cases \(K_3\) and \(K_4\) were discussed above; in particular, edge area
transferal holds even after pinning all but one vertex. The remaining
borderline case is \(K_5\), for which
\[
\#\mathcal E_1=\binom{5}{2}=\binom{5}{3}=\#\mathcal E_2.
\]
In this case the Jacobian of the map
\[
\Phi:(t_{ij})_{(v_i,v_j)\in \mathcal E_1}
\mapsto
\bigl(\alpha(t_{ij},t_{ik},t_{jk})\bigr)_{(v_i,v_j,v_k)\in \mathcal E_2}
\]
is a \(10\times 10\) matrix. Evaluating at the point \(t_{ij}=1\) for every
edge \((v_i,v_j)\in \mathcal E_1\), and using the normalization of
\(\alpha\) from Section~\ref{sec: jacobianmethod}, this Jacobian becomes the
triangle-edge incidence matrix of \(K_5\). In particular, each row has exactly
three entries equal to \(1\) and all other entries equal to \(0\). A direct
computation shows that its determinant is nonzero; more precisely, it is
\(\pm 48\), depending on the orderings chosen for the edges and triangles.
Therefore edge area transferal holds for \(K_5\).
\end{rem}

Since length-area transferal holds for $K_{k+1}$ for $2\leq k\leq 4$, and $K_{k+1}$ is a $k$-dimensional simplex, one can check what thresholds are available for positive measure of $\Delta^{K_{k+1}}(E)$. By plugging Du and Zhang's spherical average decay estimate \cite[Theorem 2.8]{DZ2019} into group action machinery in \cite{GILP15}, one gets the threshold 
$$E\subset \R^d,\,d\geq 3,\,\dim(E)>\alpha_{\textnormal{DZ/GILP}}(d,k):=\frac{kd^2}{(k+1)d-1}\Rightarrow \mathcal{L}^{{k+1}\choose 2}(\Delta^{K_{k+1}}(E))>0.$$
Let us focus on $k=d$ (complete graph on $(d+1)$ vertices in $\R^d$). Then one gets
$$E\subset \R^3,\,\dim(E)>\alpha_{\textnormal{DZ/GILP}}(3,3):=\frac{27}{11}\Rightarrow \mathcal{L}^{6}(\Delta^{K_4}(E))>0$$
and 
$$E\subset \R^4,\,\dim(E)>\alpha_{\textnormal{DZ/GILP}}(4,4):=\frac{64}{19}\Rightarrow \mathcal{L}^{10}(\Delta^{K_5}(E))>0.$$
By length-area transferal, we see that the set of triangle area vectors showing up in tetrahedra with vertices in $E\subset \R^3,\, \dim(E)>\frac{27}{11}$, has positive measure in $\R^4$. Similarly $(A_{\Delta})^{K_5}(E)\subset \R^{10}$ has positive Lebesgue measure if $E\subset \R^4$ with $\dim(E)>\frac{64}{19}$. One can also get a positive measure for areas of triangles with vertices in $E\subset \R^2$ for $\dim(E)>8/5$, but that is not as good as Yavicoli and Shmerkin's threshold $1$.

\begin{prop}\label{prop:banana-transferal}
Let \(G\) be the banana graph of Figure \ref{fig:banana} with vertices \(v_1,\ldots,v_5\) and with
\(2\)-simplices
\[
\begin{array}{lll}
T_1=\{v_3,v_4,v_5\},&
T_2=\{v_1,v_3,v_4\},&
T_3=\{v_1,v_4,v_5\},\\
T_4=\{v_1,v_3,v_5\},&
T_5=\{v_2,v_3,v_4\},&
T_6=\{v_2,v_4,v_5\},\\
T_7=\{v_2,v_3,v_5\}.&
\end{array}
\]
Then edge area transferal holds for \(G\).
\end{prop}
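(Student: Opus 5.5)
The plan is to verify Definition \ref{def: edgetransferal} directly: exhibit one input at which the $7\times 9$ Jacobian $D\Phi$ of the length-to-area map has full row rank. As in Remark \ref{rem:remarkaboutcompletegraph}, I will evaluate at the point where every edge length equals $1$.

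First I check the entries of $D\Phi$ at this point. We use the normalized area function
$$\alpha(x,y,z)=-\tfrac{x^2}{2}-\tfrac{y^2}{2}-\tfrac{z^2}{2}+xy+xz+yz,$$
whose gradient is $(-x+y+z,\,x-y+z,\,x+y-z)$. At $t\equiv 1$ each component of this gradient equals $1$. So $D\Phi(\mathbf 1)$ is exactly the triangle--edge incidence matrix of $G$: the row of $T_j$ has a $1$ in the column of each of its three edges and $0$ elsewhere.

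Next I label the nine edges:
\begin{itemize}
\item the \emph{inner} edges $a=\{v_3,v_4\}$, $b=\{v_4,v_5\}$, $c=\{v_3,v_5\}$;
\item the $v_1$-spokes $p_i=\{v_1,v_i\}$ for $i=3,4,5$;
\item the $v_2$-spokes $q_i=\{v_2,v_i\}$ for $i=3,4,5$.
\end{itemize}
With this labelling the rows have the following supports:
\begin{itemize}
\item $T_1=\{a,b,c\}$;
\item $T_2=\{p_3,p_4,a\}$, $T_3=\{p_4,p_5,b\}$, $T_4=\{p_3,p_5,c\}$;
\item $T_5=\{q_3,q_4,a\}$, $T_6=\{q_4,q_5,b\}$, $T_7=\{q_3,q_5,c\}$.
\end{itemize}

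Now I show the rows are linearly independent. Suppose $\sum_j \lambda_j\,\mathrm{row}(T_j)=0$.
\begin{itemize}
\item The columns $p_3,p_4,p_5$ are met only by $T_2,T_3,T_4$. They give $\lambda_2+\lambda_4=0$, $\lambda_2+\lambda_3=0$ and $\lambda_3+\lambda_4=0$. This odd-cycle system forces $\lambda_2=\lambda_3=\lambda_4=0$.
\item The columns $q_3,q_4,q_5$ give the same system for $T_5,T_6,T_7$, so $\lambda_5=\lambda_6=\lambda_7=0$.
\item Finally, column $a$ reduces to $\lambda_1=0$.
\end{itemize}
Hence $D\Phi(\mathbf 1)$ has rank $7$.

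By the discussion preceding Definition \ref{def: edgetransferal}, the determinant of $D\Phi\,D\Phi^T$ is then a nonzero polynomial. Therefore $D\Phi$ is surjective off a measure-zero variety, and edge area transferal holds for $G$. The only point needing care is the correctness of the entry computation at the all-ones point; after that, the rank argument is a short explicit linear-algebra check and poses no real obstacle. The argument does not need the input to be realizable in $E$, since only nonvanishing of the polynomial somewhere is used.
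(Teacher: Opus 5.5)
Your proposal is correct and takes the same route as the paper: evaluate $D\Phi$ at the all-ones point, where it becomes the triangle--edge incidence matrix, and check that it has rank $7$. The only difference is in the linear-algebra check. The paper exhibits the $7\times 7$ minor on the columns $t_{34},t_{45},t_{53},t_{13},t_{14},t_{15},t_{23}$ and computes its determinant to be $4$, while your odd-cycle argument on the spoke columns shows directly that no nontrivial linear combination of the rows vanishes; either verification suffices.
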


\begin{proof}
We label the nine side lengths as follows:
\[
\begin{array}{lll}
t_1=t_{34},&
t_2=t_{45},&
t_3=t_{53},\\
t_4=t_{13},&
t_5=t_{14},&
t_6=t_{15},\\
t_7=t_{23},&
t_8=t_{24},&
t_9=t_{25}.
\end{array}
\]
Thus \(t_1,t_2,t_3\) are the side lengths of the central triangle,
\(t_4,t_5,t_6\) are the side lengths from \(v_1\) to the central triangle, and
\(t_7,t_8,t_9\) are the side lengths from \(v_2\) to the central triangle.

We use the polynomial from Section~\ref{sec: jacobianmethod},
\[
\alpha(x,y,z)
=
-\frac{x^2}{2}-\frac{y^2}{2}-\frac{z^2}{2}
+xy+xz+yz.
\]
Then
\[
\nabla \alpha(x,y,z)
=
(-x+y+z,\ x-y+z,\ x+y-z),
\]
and hence
\[
\nabla \alpha(1,1,1)=(1,1,1).
\]

With the above labeling, the length-to-area map has seven coordinates
\[
\begin{aligned}
A_1=&\alpha(t_1,t_2,t_3),
A_2=\alpha(t_1,t_4,t_5),A_3=\alpha(t_2,t_5,t_6),\\
A_4=\alpha(t_3,t_4&,t_6),
A_5=\alpha(t_1,t_7,t_8),
A_6=\alpha(t_2,t_8,t_9),
A_7=\alpha(t_3,t_7,t_9).
\end{aligned}
\]
We evaluate the Jacobian at the point
\[
t_1=t_2=t_3=t_4=t_5=t_6=t_7=t_8=t_9=1.
\]
Since \(\nabla\alpha(1,1,1)=(1,1,1)\), every nonzero entry of this Jacobian
is equal to \(1\).

Now restrict the Jacobian to the seven columns corresponding to
\[
t_1,\ t_2,\ t_3,\ t_4,\ t_5,\ t_6,\ t_7.
\]
With rows ordered as \(A_1,A_2,\ldots,A_7\), the corresponding \(7\times 7\)
minor is
\[
M=
\begin{pmatrix}
1&1&1&0&0&0&0\\
1&0&0&1&1&0&0\\
0&1&0&0&1&1&0\\
0&0&1&1&0&1&0\\
1&0&0&0&0&0&1\\
0&1&0&0&0&0&0\\
0&0&1&0&0&0&1
\end{pmatrix}.
\]
A direct computation gives
\[
\det M=4.
\]
Therefore this minor is nonzero. Hence the Jacobian of the length-to-area map
has rank \(7\), which is the number of area coordinates. Thus the map has full
row rank at this input, and edge area transferal holds for the banana graph.

\end{proof}

\subsection{Comments on higher-dimensional volumes}
Though our discussion so far has been focused on finding positive measure sets of area vectors, it is not so difficult to generalize some of our methods to higher dimensional $n$-volume vectors. We do not attempt to pursue this avenue in any great generality or detail here, instead contenting ourselves with sketching a few simple ideas. As before, Theorem \ref{thm:k starinIPPS} will be useful. The substitute for Heron's formula is the fact that the square of the $n$-volume of an $n$-simplex can be computed using \eqref{eq Cayley Menger}, which says that the square of the volume is given by a polynomial function of the lengths of the edges. Together, these observations can be used to get results for ``fans of tetrahedra", for instance, such as the simplicial complex pictured below this consisting of two tetrahedra sharing a face, a graph which we will call $T_2$.

\begin{figure}[h]
    \centering
\begin{tikzpicture}[scale=1.7, tdplot_main_coords,
  vertex/.style={circle,fill=black,draw=black,inner sep=2pt},
  edge/.style={line width=0.8pt},
  hidden/.style={edge,dashed,opacity=0.5},
  lab/.style={font=\small}
]

\coordinate (v1) at (0.5,0.2,0.2);
\coordinate (v2) at (1,0,0);
\coordinate (w) at (0.5,0.866,0);  

\coordinate (v4) at (-0.3,0.3,0.9);   
\coordinate (v3) at (1.3,0.3,-0.9);   

\fill[gray!15] (v1)--(v2)--(w)--cycle;

\fill[blue!20,opacity=0.7] (v4)--(v1)--(w)--cycle;
\fill[blue!10,opacity=0.7] (v4)--(v2)--(w)--cycle;
\fill[blue!8,opacity=0.7] (v4)--(v1)--(v2)--cycle;

\fill[red!15,opacity=0.7] (v3)--(v1)--(w)--cycle;
\fill[red!10,opacity=0.7] (v3)--(v2)--(w)--cycle;
\fill[red!8,opacity=0.7] (v3)--(v1)--(v2)--cycle;

\draw[edge,thick] (v1)--(v2)--(w)--cycle;

\draw[edge] (v4)--(v1);
\draw[edge] (v4)--(v2);
\draw[edge] (v4)--(w);

\draw[edge] (v3)--(v1);
\draw[edge] (v3)--(v2);
\draw[edge] (v3)--(w);


\node[vertex,magenta] at (v1) {};
\node[vertex, magenta] at (v2) {};
\node[vertex] at (w) {};
\node[vertex, magenta] at (v4) {};
\node[vertex, magenta] at (v3) {};

\node[lab,above right]  at (v1) {$v_1$};
\node[lab,left] at (v2) {$v_2$};
\node[lab,above right]       at (w) {$w$};
\node[lab,left]        at (v4) {$v_4$};
\node[lab,right]       at (v3) {$v_3$};

\end{tikzpicture}
\caption{The graph $T_2$ with the two tetrahedra of interest shaded}
\end{figure}

Let
\[
(V_{3-\Delta})_{x_1,x_2,x_3,x_4}^{T_2}(E):=\{(\text{Vol}(x_1,x_2,x_3,y),\text{Vol}(x_1,x_2,x_4,y)):y\in E\}
\]
denote the collection of volume vectors associated to the graph $T_2$ that are realized by point configurations in a compact set $E\subset \mathbb{R}^d$. The following is a sample result along the lines of what we did for areas.
\begin{thm}
    Let $E\subset\mathbb{R}^d$ be a compact subset with $\dim(E)>(d+4)/2$. Then there exist points $x_1,x_2,x_3,x_4\in E$ such that
    \[
    \mathcal{L}^2((V_{3-\Delta})_{x_1,x_2,x_3,x_4}^{T_2}(E))>0.
    \]
\end{thm}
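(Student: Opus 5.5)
The plan is to mimic the proof of Theorem \ref{thm: k pinned fans}, using Theorem \ref{thm:k starinIPPS} with $k=4$. Since $\dim(E)>\frac{d+4}{2}$ (which forces $d>4$), we fix an $s$-Frostman measure $\mu$ on $E$ with $s>\frac{d+4}{2}$. We then choose five pairwise separated compact subsets $E_1,\dots,E_5\subset E$ of positive $\mu$-measure; this is possible because $\mu$ is non-atomic. Theorem \ref{thm:k starinIPPS} then gives, for $\mu_{E_1}\times\cdots\times\mu_{E_4}$-a.e. $(x_1,\dots,x_4)$,
\[
\mathcal{L}^4\bigl(\Delta^{4\text{-star}}_{x_1,x_2,x_3,x_4}(E_5)\bigr)>0 .
\]
We fix one such quadruple. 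The pairwise distances $s_{ij}=|x_i-x_j|$ are then frozen positive constants, and the only varying lengths are $t_i=|x_i-y|$ for $i=1,\dots,4$ and $y\in E_5$.

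By the Cayley--Menger formula \eqref{eq Cayley Menger}, the quantities $V_3:=\mathrm{Vol}(x_1,x_2,x_3,y)^2$ and $V_4:=\mathrm{Vol}(x_1,x_2,x_4,y)^2$ are polynomials in the squared lengths. With the pins fixed, they are polynomials $P_3(t_1,t_2,t_3)$ and $P_4(t_1,t_2,t_4)$. Consider the map
\[
\Phi(t_1,t_2,t_3,t_4)=(P_3(t_1,t_2,t_3),\,P_4(t_1,t_2,t_4),\,t_1,\,t_2).
\]
Its Jacobian determinant equals, up to sign, $\partial_{t_3}P_3\cdot\partial_{t_4}P_4$, because $P_3$ does not depend on $t_4$ and $P_4$ does not depend on $t_3$. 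If each factor is a nonzero polynomial, then the determinant vanishes only on a Lebesgue-null algebraic set. The excision argument from the proof of Theorem \ref{thm: k pinned fans} then shows that $\Phi$ maps the positive-measure star set onto a set of positive $\mathcal{L}^4$ measure. Projecting onto the first two coordinates gives positive $\mathcal{L}^2$ measure of the set of squared volume pairs. To pass to the volumes themselves, we remove the null set where a volume vanishes and apply the square root, which is locally bi-Lipschitz away from $0$.

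The main obstacle is showing that $\partial_{t_3}P_3\not\equiv0$ for the \emph{fixed} pinned values $s_{12},s_{13},s_{23}$. This is the "frozen lengths" issue discussed earlier in the paper. I would handle it geometrically rather than through the explicit determinant.

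1. For almost every pin choice, $x_1,x_2,x_3$ are not collinear: the set of collinear triples is $\mu^3$-null, since lines have dimension $1<s$. We discard this null set as well.
2. With $x_1,x_2,x_3$ spanning a plane, write $V_3=c\,A_{123}^2h^2$, where $A_{123}$ is the area of the triangle $x_1x_2x_3$ and $h$ is the distance from $y$ to that plane.
3. Fix $t_1,t_2$. This confines $y$ to a codimension-two sphere. Varying $t_3$ then moves the foot of $y$'s projection within the plane along a line, and this changes $h^2$ nontrivially.

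Concretely, one can also check directly that $P_3$ has a nonzero coefficient on $t_3^4$, namely $-s_{12}^2$ up to a constant. This makes $\partial_{t_3}P_3$ visibly nonzero as long as $s_{12}>0$, and the same argument applies to $P_4$ with $t_4$. The positivity $s_{12}>0$ holds because $E_1$ and $E_2$ are separated.
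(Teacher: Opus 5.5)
Your proposal is correct and follows the paper's own sketch: apply the $k$-star theorem with $k=4$, use the triangular map built from $t_1,t_2$ and the two squared Cayley--Menger volumes, excise the null zero set of the Jacobian $\pm\,\partial_{t_3}P_3\,\partial_{t_4}P_4$, and project onto the volume coordinates. The paper's sketch only asserts that this Jacobian vanishes on a null variety, and your check that the $t_3^4$ coefficient of $P_3$ is a nonzero constant multiple of $s_{12}^2$ (it equals $-2s_{12}^2/288$) correctly supplies that non-degeneracy for frozen pin lengths. This also makes your geometric non-collinearity step unnecessary.
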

We do not give a detailed proof here as the argument is analogous to the proof of Theorem \ref{thm: k pinned fans}, instead giving merely a sketch for the reader. The threshold is now $(d+4)/2$ because we need to use a version of Theorem \ref{thm:k starinIPPS} for 4-stars instead of 3-stars. If we let $t_i=|x_i-y|$, then the analog of the mapping $\Phi$ we would define would be given by
\[
\Phi(t_1,t_2,t_3,t_4)=(t_1,t_2,\text{Vol}(x_1,x_2,x_3,y)^2,\text{Vol}(x_1,x_2,x_4,y)^2)
\]
where the squares of the volumes can be written as polynomials in terms of the distances between the various points realized in $E$. The derivative matrix $D\Phi$ is triangular and vanishes on some variety which has Lebesgue measure 0, so after excising a small portion of $E$, the Jacobian is bounded above and below. Repeating the argument from Theorem \ref{thm: k pinned fans} then gives the claim.

Other generalizations involving larger fans of tetrahedra, or even involving $n$-volumes of fans of $n$-simplices are possible using similar methods, but we do not pursue those here, instead leaving them for the interested reader.

\subsection{Nonempty interior transferal}
In addition to transferring information about distance sets having positive measure to area sets, we can also transfer the property of having nonempty interior, provided that the length-to-area map has full row rank somewhere. The basic idea is as follows: let $U$ denote an open subset of the set of length vectors and let $\Phi$ denote the length-to-area map. Then $D\Phi$, being a polynomial map, is full rank except on a lower dimensional (closed) subvariety $V$. Consider $\Phi|_{U\setminus V}$; on this domain, $\Phi$ is a submersion and is thus an open map, so its image is also open. We can use this to deduce some corollaries.

For example, length-area transferal holds for unpinned triangles since the length-to-area map for a triangle has full
rank away from a proper algebraic subvariety. By a result of Palsson and Romero Acosta \cite{PRA23}, for $d\geq 4$, as long as $E\subset \R^d$ with $\dim(E)>\frac{2d+3}{3}$, one has $\text{int}(\Delta^{K_3}(E))\neq \emptyset$ where $\Delta^{K_3}(E)=\{(|x_1-x_2|,|x_2-x_3|,|x_3-x_1|)\colon x_1,x_2,x_3\in E\}$. The observation above implies that the triangle area set \[ A_{\Delta}(E) = \{ \A(x_1,x_2,x_3):x_1,x_2,x_3\in E \} \] has nonempty interior under the same dimensional assumption. Further extensions to simplices can be achieved by using the more recent paper by the same authors \cite{PRA25}.

One can also transfer pinned nonempty interior results for \(k\)-stars. As an
illustration, consider the wheel graph \(W_6\), with all vertices pinned except
for the center vertex \(y\). Label the pinned vertices by
\(v_1,\ldots,v_6\), cyclically, and consider the six triangles
\[
\{v_i,v_{i+1},y\},
\qquad
1\leq i\leq 6,
\]
where \(v_7=v_1\). The outer cycle is fully pinned. Let
\(s_i\) denote the frozen length of the pinned edge \(\{v_i,v_{i+1}\}\), and
let \(t_i\) denote the length of the edge \(\{v_i,y\}\). The length-to-area map
is a map from \(\mathbb R^6\) to \(\mathbb R^6\), and its Jacobian has the form
\[
\begin{pmatrix}
-t_1+t_2+s_1&t_1-t_2+s_1&0&0&0&0\\
0&-t_2+t_3+s_2&t_2-t_3+s_2&0&0&0\\
0&0&-t_3+t_4+s_3&t_3-t_4+s_3&0&0\\
0&0&0&-t_4+t_5+s_4&t_4-t_5+s_4&0\\
0&0&0&0&-t_5+t_6+s_5&t_5-t_6+s_5\\
-t_1+t_6+s_6&0&0&0&0&t_1-t_6+s_6
\end{pmatrix}.
\]
For fixed \(s_1,\ldots,s_6>0\), choose \(t_1,\ldots,t_5\) generically so that
the first five diagonal entries are nonzero. Then set
\[
t_6=t_1-s_6.
\]
With this choice, the lower-left entry vanishes, while the last diagonal entry
becomes
\[
t_1-t_6+s_6=2s_6\neq 0.
\]
Thus the Jacobian has full rank for some choice of \(t_1,\ldots,t_6\). It
follows that, whenever the pinned \(6\)-star length set
\[
\Delta^{6\operatorname{-star}}_{x_1,\ldots,x_6}(E)
\]
has nonempty interior, the corresponding pinned wheel area set also has
nonempty interior.

In particular, the forthcoming work~\cite{BOP2026} gives pinned nonempty
interior results for \(k\)-stars. More precisely, for
\(1\leq k<d/2\), if
\[
\dim(E)>
\alpha^{\circ}(d,k)
:=
\frac{d+2k-1}{2}
+\frac{1}{4}
+\frac{4k+1}{4(2d+1)},
\]
then there exist pins \(x_1,\ldots,x_k\in E\) such that
\[
\operatorname{int}
\bigl(
\Delta^{k\operatorname{-star}}_{x_1,\ldots,x_k}(E)
\bigr)
\neq \emptyset.
\]
Combining this with the preceding Jacobian argument yields a nonempty interior
result for areas of the $6$ relevant triangles in $W_6$ as long as $\dim(E)>\alpha^{\circ}(d,6)$, and similar conclusion holds for general $k\geq 3$ and pinned wheel $W_k$ with pins in all but the central vertex.

\section{Areas of triangles in the plane via projections}\label{sec: projectionmethod}

The goal of this section is to develop projection-theoretic building blocks for triangle area configurations in the plane. These arguments replace the Jacobian method used in higher dimensions.

Our first result is a strengthened form of the theorem of Shmerkin and Yavicoli. For planar sets of Hausdorff dimension strictly larger than $1$, we show not only that some pinned triangle area set has positive Lebesgue measure, but also that the corresponding good pairs of pins occur abundantly, in the sense that they form a set of positive product measure.

This strengthened form of the Shmerkin--Yavicoli theorem allows us to use double-pinned statements for triangle areas as building blocks for more complicated graphs assembled from triangles. In our abundance statement, a good pair of pins is required to satisfy a stronger $L^2$ projection condition. This formulation is better suited to Fubini-type gluing arguments and is crucial for handling area configurations arising from edge-attached triangle chains in the plane. The details of this transfer to edge-attached triangle chains are given in Proposition~\ref{prop:trianglesbyedges} in the appendix. Moreover, the abundance of choices for the first pin allows us to obtain positive Lebesgue measure for area vectors associated with vertex-attached triangle chains, through the vertex-gluing mechanism proved in Proposition~\ref{prop: trianglesbyvertices} in the appendix. These ingredients yield Corollaries \ref{cor: vertextrianglechain} and \ref{cor:edgetrianglechains} for vertex-attached and edge-attached chains of triangles in the plane.

The planar fish graph is treated by combining the ideas used for the two types of triangle chains described above. That verifies the threshold \(\dim(E)>1\) in $d=2$, stated in Theorem \ref{thm: fishAllD}.

 Let us introduce some notation that will be useful for the rest of the paper. The radial projection with center $x_1 \in \R^2$, is given as
\[
\pi_{x_1} : \R^2\setminus\{x_1\} \to \mathbb{S}^1, 
\quad y \mapsto \frac{y - x_1}{|y - x_1|}.
\]

We recall the following key estimate proved by Orponen \cite{Orponen19} (also see \cite[Proposition 3.11]{KeletiShmerkin} for this particular statement). 

\begin{thm}[Orponen's radial projection estimate] \label{thm:OrponenL^pprojection} For every $\alpha>1$, there exists $p=p(\alpha)>1$ such that the following holds. Let $\mu_1$ and $\mu_2$ be compactly supported $\alpha$-Frostman measures with disjoint supports in $\R^2$. Then 
$$\int \|(\pi_{x_1})_{*}(\mu_2)\|_{L^p(\mathbb{S}^1)}^pd\mu_1(x_1)<\infty.$$
    
\end{thm}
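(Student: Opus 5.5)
Since this estimate is quoted from \cite{Orponen19} (in the form of \cite[Proposition 3.11]{KeletiShmerkin}), we will use it as a black box later. If we wanted to reprove it, we would proceed as follows. By monotonicity of $L^p$ norms on the compact $\mathbb{S}^1$, it suffices to find \emph{some} $p>1$. We would therefore discretize and reduce to a level-set estimate. Write $\nu_{x}=(\pi_{x})_*\mu_2$. Since the supports are disjoint, $\pi_x$ is Lipschitz on $\operatorname{supp}\mu_2$ uniformly in $x\in\operatorname{supp}\mu_1$, so $\nu_x$ is a finite measure. Standard dyadic pigeonholing then reduces $\int\|\nu_x\|_{L^p}^p\,d\mu_1(x)<\infty$ to the following statement: there exist $\eta,\kappa>0$ such that for all dyadic $\delta$,
\[
\mu_1\bigl(\{x:\ \exists\ \text{arc } I\subset\mathbb{S}^1,\ |I|=\delta,\ \nu_x(I)\ge \delta^{1-\eta}\}\bigr)\lesssim \delta^{\kappa}.
\]
Summing the resulting geometric series over $\delta$ gives finiteness for $p=p(\eta,\kappa)>1$ close to $1$.

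Next we would geometrize the level-set estimate. Having $\nu_x(I)\ge\delta^{1-\eta}$ means that some $\delta$-tube $T$ through $x$ has $\mu_2(T)\gtrsim\delta^{1-\eta}$; call such tubes \emph{heavy}. The bad set of $x$ is therefore contained in the union of the heavy $\delta$-tubes, intersected with $\operatorname{supp}\mu_1$. To control the heavy tubes we would use the $L^2$ theory of orthogonal projections. Since $\alpha>1$, Kaufman's energy argument gives $\int_{\mathbb{S}^1}\|(P_e)_*\mu_2\|_{L^2}^2\,de\lesssim I_1(\mu_2)<\infty$. By Chebyshev, for each direction $e$ the heavy tubes in direction $e$ carry a $(P_e)_*\mu_2$-mass controlled by the $L^2$ norm. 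In particular, outside a small exceptional set of directions, there are at most about $\delta^{-1+2\eta}$ heavy tubes per direction, so only a sparse family of heavy tubes survives.

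The final and hardest step is to show that $\mu_1$ assigns small mass to the union of this sparse family of heavy tubes. This requires using that $\mu_1$ is $\alpha$-Frostman with $\alpha>1$. A single tube has $\mu_1$-mass at most $\delta^{\alpha-1}$, which is already small. The difficulty is that heavy tubes in many directions could all pass through a common region of $\mu_1$ (a ``bush'' or ``hairbrush'' configuration). To rule this out we would exploit the symmetry of the roles: if $x$ is bad, then $\mu_2$ concentrates on a tube through $x$. A double-counting argument over pairs $(x,y)\in\operatorname{supp}\mu_1\times\operatorname{supp}\mu_2$ lying on a common heavy tube, combined with an incidence (Furstenberg-type) bound for $\delta$-tubes against $\alpha$-dimensional sets with $\alpha>1$, should yield the power gain $\delta^{\kappa}$. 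If the incidence input proves too delicate, a natural fallback is a projective transformation sending the pencil of lines through $x$ to a parallel family. This turns radial projections into orthogonal projections of a transformed measure, parameterized by $x$, so that the $L^2$ energy argument could be run uniformly in $x$ and then integrated against $\mu_1$. This step is where we expect the real obstacle, and it is precisely the content of Orponen's work.
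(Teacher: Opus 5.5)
Citing the estimate as a black box is exactly what the paper does. It gives no proof and simply quotes Orponen's theorem in the form of Keleti--Shmerkin, Proposition 3.11, so nothing more is needed for the paper's purposes.

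Your reproof sketch, however, fails at the first reduction, before the step you flag as hard.

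\textbf{The reduction is wrong.} Your level-set estimate ties the density threshold to the scale ($\nu_x(I)\ge\delta^{1-\eta}$ for $|I|=\delta$) and only asks whether \emph{some} heavy arc exists. By Borel--Cantelli the most it yields is a Frostman bound $\nu_x(I)\lesssim_x|I|^{1-\eta}$ for $\mu_1$-a.e.\ $x$, which says nothing about absolute continuity.
\begin{itemize}
\item Counterexample: a singular Cantor measure $\nu$ on $\mathbb{S}^1$ with $\nu(I)\le C|I|^{1-\eta/2}$ has no arc with $\nu(I)\ge|I|^{1-\eta}$ once $|I|<C^{-2/\eta}$. The family $\nu_x\equiv\nu$ therefore satisfies your estimate for every $\kappa$, yet has no $L^p$ density.
\item Quantitatively, off the bad set you only get $\sum_{|I|=\delta}\delta^{1-p}\nu_x(I)^p=\sum_{|I|=\delta}\nu_x(I)\,(\nu_x(I)/\delta)^{p-1}\le\mu_2(\mathbb{R}^2)\,\delta^{-\eta(p-1)}$, summing over a partition of $\mathbb{S}^1$ into arcs of length $\delta$. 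This blows up as $\delta\to0$, so there is no convergent geometric series.
\end{itemize}
A discretized statement that does suffice must decouple the density level $\lambda$ from the scale, and must measure the mass of heavy pairs rather than the existence of one heavy arc. Let $T_\delta(x,y)$ be the $\delta$-tube around the line through $x$ and $y$. Then the sum above, integrated against $\mu_1$, is dominated by $\iint(\mu_2(T_{C\delta}(x,y))/\delta)^{p-1}\,d\mu_2(y)\,d\mu_1(x)$. By the layer-cake formula it therefore suffices to show $(\mu_1\times\mu_2)(\{(x,y):\mu_2(T_\delta(x,y))\ge\lambda\delta\})\lesssim\lambda^{-\kappa}$ for all $\lambda\ge1$, uniformly in $\delta$. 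Then any $p<1+\kappa$ works, after Fatou as $\delta\to0$.

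\textbf{The key step is missing, and neither suggested route supplies it.}
\begin{itemize}
\item Tube counting: about $\delta^{-1}\cdot\delta^{-1+2\eta}$ heavy tubes, each of $\mu_1$-mass $\lesssim\delta^{\alpha-1}$, give only the union bound $\delta^{\alpha-3+2\eta}\gg1$, which is useless. Heavy tubes in the exceptional directions are not controlled at all. Everything is deferred to an unspecified incidence estimate.
\item Projective fallback: for a fixed centre $x$, the change of variables turns $\pi_x$ into a \emph{single} orthogonal projection of an $x$-dependent measure. Kaufman's identity controls only the average over directions of $\|(P_e)_*\mu_2\|_{L^2}^2$ for one fixed measure, so there is nothing to run ``uniformly in $x$''.
\item To exploit Kaufman, you would have to group centres along lines $L$, send $L$ to infinity, and show that the slice of $\mu_1$ on $L$ (of dimension about $\alpha-1$) does not charge the directions where the projection fails to be in $L^2$. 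Mattila's exceptional-set lemma, quoted in the paper, bounds that set's dimension by $2-\alpha$. So this route closes only when $\alpha-1>2-\alpha$, i.e.\ $\alpha>3/2$.
\end{itemize}
Covering all $\alpha>1$, at the price of some $p(\alpha)$ close to $1$, is precisely Orponen's contribution. Citing it, as the paper does, is the right move, but the sketch should not be presented as a proof outline in its current form.
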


Let $\rho_{\theta,x_1}$ be the orthogonal projection onto the affine line of direction $\theta\in [0,2\pi]\simeq \mathbb{S}^1$ passing through $x_1$, i.e.
\[
\rho_{\theta,x_1}(y) : \R^2 \to \R,
\quad y \mapsto (y - x_1)\cdot (\cos\theta, \sin\theta).
\]

Observe that
\[
(y-x_1)\cdot (\cos\theta,\sin\theta) 
= y\cdot (\cos\theta,\sin\theta) - x_1 \cdot (\cos\theta,\sin\theta),
\]
and since the Lebesgue measure is translation invariant 
\[
\mathcal{H}^1(\rho_{\theta,x_1}(E_3)) > 0
\quad \iff \quad
\mathcal{H}^1(\rho_{\theta,0}(E_3)) > 0.
\]

Denote $\rho_{\theta,0}$ simply $\rho_{\theta}$.

\subsection{A projection-theoretic building block at threshold $1$.}

In this subsection, we prove a strengthened version of Shmerkin and Yavicoli's result that guarantees an abundance of good pairs of pins satisfying an $L^2$ orthogonal projection property, which is important for us to be able to use those as building blocks for more general configurations involving areas of triangles. Our proof combines radial and orthogonal projection estimates.

We will need the following lemma, due to \cite[Theorem 5.6]{Mattilabook2015}. For a finite compactly supported measure $\nu$ in $\R^n$, define 
\[
\mathcal{I}_t(\nu)
:=
\int_{\mathbb{R}^n} |\widehat{\nu}(\xi)|^2 |\xi|^{t-d}\,d\xi.
\]
Recall that the Sobolev dimension of a measure is defined as 
\[
\dim_S(\nu):=\sup\{t:\mathcal{I}_t(\nu)<\infty\}.
\]

\begin{lem}[\cite{Mattilabook2015}, Theorem 5.6]\label{thminMattilabook}
    For any measure $\mu$ on $\R^d$
    \[
    \dim_H\{\theta\in S^{d-1}:\dim_S((\rho_{\theta})_*\mu)<t\}\le \max\{0, d-1+t-\dim_S\mu\}.
    \]
\end{lem}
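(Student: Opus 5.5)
The plan is the standard Fourier-analytic projection argument: an energy estimate averaged over directions, weighted by a Frostman measure on the exceptional set of directions.

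\textbf{Setup.} Fix $t$ and write $E_t:=\{\theta\in S^{d-1}:\dim_S((\rho_\theta)_*\mu)<t\}$. If $\dim_S(\rho_\theta)_*\mu<t$, then by definition of the supremum, $\mathcal I_t((\rho_\theta)_*\mu)=\infty$. Suppose, for contradiction, that $\dim_H E_t>\max\{0,d-1+t-\dim_S\mu\}$. Choose $s<\dim_S\mu$ and $\sigma$ with $\sigma>d-1+t-s$ and $\sigma<\dim_H E_t$; this is possible by choosing $s$ close to $\dim_S\mu$. Frostman's lemma then gives a nonzero measure $\nu$ with $\nu(E_t)>0$ and $\nu(B(\theta,r))\lesssim r^\sigma$.

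\textbf{Averaged energy identity.} Since $\widehat{(\rho_\theta)_*\mu}(r)=\widehat\mu(r\theta)$ for $r\in\R$, we have
\[
\int \mathcal I_t((\rho_\theta)_*\mu)\,d\nu(\theta)=\int\int_{\R}|\widehat\mu(r\theta)|^2|r|^{t-1}\,dr\,d\nu(\theta).
\]
The goal is to bound the right-hand side by $C\,\mathcal I_s(\mu)<\infty$. That would give $\mathcal I_t((\rho_\theta)_*\mu)<\infty$ for $\nu$-a.e.\ $\theta$, contradicting $\nu(E_t)>0$.

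\textbf{Low frequencies.} The region $|r|\le 1$ is harmless, because $|\widehat\mu|\le\mu(\R^d)$ and $t>0$.

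\textbf{Main step: high frequencies.} This is the uncertainty-principle step and the one I expect to be the main obstacle. Since $\mu$ is compactly supported, choose a Schwartz function $\phi$ with $\phi\equiv1$ on $\operatorname{supp}\mu$. Then $\widehat\mu=\widehat\mu*\widehat\phi$, and Cauchy--Schwarz gives
\[
|\widehat\mu(\xi)|^2\lesssim\int|\widehat\mu(\eta)|^2\,|\widehat\phi(\xi-\eta)|\,d\eta.
\]
Inserting this and using Fubini, the high-frequency contribution is bounded by
\[
\int|\widehat\mu(\eta)|^2\,K(\eta)\,d\eta,\qquad K(\eta):=\int_{|r|>1}\int|\widehat\phi(r\theta-\eta)|\,d\nu(\theta)\,|r|^{t-1}\,dr.
\]
The key estimate to verify is $K(\eta)\lesssim|\eta|^{t-1-\sigma}$ for $|\eta|\ge1$. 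The reason is as follows.
\begin{itemize}
\item The rapid decay of $\widehat\phi$ localizes $r\theta$ to within $O(1)$ (with polynomially decaying tails) of $\eta$.
\item That forces $|r|=|\eta|+O(1)$, so the $r$-integration runs over a window of length $O(1)$ and contributes $|\eta|^{t-1}$.
\item It also forces $\theta$ into a cap of radius $\sim1/|\eta|$, whose $\nu$-measure is $\lesssim|\eta|^{-\sigma}$.
\end{itemize}
The tails are handled by a dyadic decomposition in $|r\theta-\eta|\sim2^k$. Each dyadic shell gives $\nu$-measure $\lesssim(2^k/|\eta|)^\sigma$, while the Schwartz decay of $\widehat\phi$ supplies an arbitrary power of $2^{-k}$. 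When $|\eta|$ is small relative to $2^k$, the shell is instead bounded crudely, which again costs only polynomially in $2^k$ and is absorbed by the same decay.

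\textbf{Conclusion.} Since $\sigma\ge d-1+t-s$, we get $|\eta|^{t-1-\sigma}\le|\eta|^{s-d}$ for $|\eta|\ge1$. The region $|\eta|\le1$ contributes a bounded amount. Hence
\[
\int\mathcal I_t((\rho_\theta)_*\mu)\,d\nu(\theta)\lesssim 1+\mathcal I_s(\mu)<\infty,
\]
which gives the contradiction and proves the lemma. The degenerate case where the right-hand side of the lemma is $0$ is covered by the same argument, since it requires only $\sigma>0$.
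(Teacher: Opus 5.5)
The paper gives no proof of this lemma and simply quotes Mattila's Theorem 5.6. Your argument is correct and is essentially the standard proof behind that citation: average $\mathcal{I}_t((\rho_\theta)_*\mu)$ against a $\sigma$-Frostman measure on the exceptional directions, use $\widehat\mu=\widehat\phi*\widehat\mu$ to dominate $|\widehat\mu(r\theta)|^2$ by a local average, and bound the resulting kernel by $|\eta|^{t-1-\sigma}$ via the $\nu$-measure of caps of radius $\sim 2^k/|\eta|$.

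Two points are left implicit. First, $E_t$ is Borel, so Frostman's lemma applies: $\theta\mapsto\mathcal{I}_{t'}((\rho_\theta)_*\mu)$ is lower semicontinuous by Fatou, and $E_t=\bigcup_{t'\in\mathbb{Q}\cap(0,t)}\{\theta:\mathcal{I}_{t'}((\rho_\theta)_*\mu)=\infty\}$. Second, your low-frequency step needs $t>0$ with the paper's homogeneous weight; this covers the case $t=1+\epsilon$ that the paper actually uses.
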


\begin{thm}[$L^2$ projection building block for triangle areas]\label{thm:trianglebuildingblockintheplane}
Let $\alpha>1$. Let $\mu_1,\mu_2,\mu_3$ be $\alpha$-Frostman
measures supported on pairwise separated compact sets
$E_1,E_2,E_3 \subset \mathbb{R}^2$, respectively. Then
\[
(\mu_1 \times \mu_2)
\left(
\left\{
(x_1,x_2)\in E_1\times E_2 :
\left(\rho_{\pi_{x_1}(x_2)^\perp}\right)_* \mu_3 \in L^2(\mathbb{R})
\right\}
\right)>0.
\]
In fact, the set above has full $\mu_1\times\mu_2$ measure in
$E_1\times E_2$.
\end{thm}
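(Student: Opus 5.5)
Let $B\subset \mathbb{S}^1$ be the set of bad directions $\theta$, i.e.\ those for which $(\rho_{\theta})_*\mu_3\notin L^2(\mathbb{R})$. The strategy is to show that $B$ is small in a quantitative sense, and then to show that the direction map $(x_1,x_2)\mapsto \pi_{x_1}(x_2)^\perp$ cannot put positive $\mu_1\times\mu_2$ mass on a small set. Both steps use the hypothesis $\alpha>1$: once through Lemma~\ref{thminMattilabook}, once through Theorem~\ref{thm:OrponenL^pprojection}.

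\emph{Step 1: $B$ is Lebesgue-null.} A measure $\nu$ on $\mathbb{R}$ with $\mathcal{I}_t(\nu)<\infty$ for some $t>1$ satisfies $\int|\widehat\nu(\xi)|^2|\xi|^{t-1}\,d\xi<\infty$. Since $\nu$ is finite, $\widehat\nu$ is also bounded near the origin, so $\widehat\nu\in L^2$ and hence $\nu\in L^2(\mathbb{R})$. Thus $B\subset\{\theta:\dim_S((\rho_\theta)_*\mu_3)\le t\}$ for every $t>1$. The energy bound for $\alpha$-Frostman measures gives $\dim_S\mu_3\ge \alpha$. Applying Lemma~\ref{thminMattilabook} in $d=2$ with $1<t<\alpha$ yields $\dim_H B\le 1+t-\alpha<1$. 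In particular $\mathcal{H}^1(B)=0$. For measurability I would rather work with the $\sigma$-compact set $\{\theta: \|(\rho_\theta)_*\mu_3\|_{L^2}=\infty\}$, which is Borel because the $L^2$ norm is a lower semicontinuous limit of mollified norms. A cleaner alternative to the Sobolev-dimension lemma is the classical identity $\int_{\mathbb{S}^1}\|(\rho_\theta)_*\mu_3\|_2^2\,d\theta\sim I_1(\mu_3)<\infty$. This already gives $|B|=0$, and only measure-zero is needed below.

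\emph{Step 2: transfer null sets of directions to null sets of pairs.} Fix $x_1\in E_1$. The set of $x_2$ with $\pi_{x_1}(x_2)^\perp\in B$ is $\pi_{x_1}^{-1}(B^{\perp})\cap E_2$, where $B^\perp$ is the rotation of $B$ by $\pi/2$ and is still Lebesgue-null. Its $\mu_2$-measure equals $(\pi_{x_1})_*\mu_2(B^\perp)$. By Theorem~\ref{thm:OrponenL^pprojection} applied to $\mu_1,\mu_2$ (which are $\alpha$-Frostman with disjoint supports), for $\mu_1$-a.e.\ $x_1$ the pushforward $(\pi_{x_1})_*\mu_2$ lies in $L^p(\mathbb{S}^1)$ with $p>1$. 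It is therefore absolutely continuous with respect to arc length, so it gives zero mass to $B^\perp$. Fubini then shows that the bad set of pairs has $\mu_1\times\mu_2$ measure zero. This proves the full-measure statement, and positivity follows since $\mu_1(E_1)\mu_2(E_2)>0$. Measurability of the set of pairs follows from the Borel measurability of $B$ and the continuity of $(x_1,x_2)\mapsto\pi_{x_1}(x_2)$ off the diagonal; the separation of $E_1$ and $E_2$ guarantees we stay off the diagonal.

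\emph{Expected obstacle.} The real content is that the direction $\pi_{x_1}(x_2)$ is not a free parameter. It is determined by the two pins, and without radial projection control the pairs could concentrate their directions inside the exceptional set $B$. Orponen's estimate is exactly what rules this out, and it is the only genuinely deep input. The remaining care is in the bookkeeping: the normalization of $\mathcal{I}_t$ in one dimension, the Borel measurability of $B$, and the fact that $\perp$-rotation preserves null sets.
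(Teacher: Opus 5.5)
Your proposal is correct and follows the paper's proof essentially step for step. The paper likewise bounds $\dim_{\mathcal H}$ of the bad direction set by $2+\epsilon-\alpha<1$ via Lemma~\ref{thminMattilabook}, uses Theorem~\ref{thm:OrponenL^pprojection} to get absolute continuity of $(\pi_{x_1})_*\mu_2$ for $\mu_1$-a.e.\ $x_1$, and concludes with Fubini. Two small points: the set where a lower semicontinuous function is infinite is $G_\delta$ rather than $\sigma$-compact, though Borel is all you need; and your alternative via $\int_{\mathbb{S}^1}\|(\rho_\theta)_*\mu_3\|_2^2\,d\theta\sim I_1(\mu_3)$ is a valid and simpler replacement for the lemma, since only Lebesgue-nullity of $B$ is used.
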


We note that Theorem \ref{thm:strengthenedSY}, the strengthened statement of Shmerkin Yavicoli, follows from the building block in the theorem above. Note that for all $(x_1,x_2)\in E_1\times E_2$, $\pi_{x_1}(x_2)$ points in the direction of the line connecting $x_1$ and $x_2$. If one has that 
$$\left(\rho_{\pi_{x_1}(x_2)^\perp}\right)_* (\mu_3) \in L^2(\mathbb{R}),$$
then the orthogonal projection of $E_3$ in the direction of the line perpendicular to the fixed edge $(x_1,x_2)$ has positive measure. 
Then as in \cite{SY25}, that is the same as positive measure worth of areas of triangles pinned.

\begin{figure}[h]
\centering
\begin{tikzpicture}[scale=1.2,>=latex]
  \coordinate (x1) at (0,0);
  \coordinate (x2) at (3,-0.3);
  \coordinate (x3) at (1.5,1.5);

  \tikzset{
    vect/.style = {->, thick},
    pt/.style   = {circle, inner sep=0pt, minimum size=5pt}, 
    lbl/.style  = {font=\small}
  }

  \fill[magenta!10] (x1) -- (x2) -- (x3) -- cycle;

  \draw[vect] (x1) -- (x2);
  \draw[vect] (x1) -- (x3);
  \draw[thick] (x2) -- (x3);

  \node[pt,fill=magenta] at (x1) {};
  \node[pt,fill=magenta] at (x2) {};
  \node[pt,fill=black]    at (x3)   {};

  \node[lbl,below left=3pt]  at (0.5,0) {$x_1$};
  \node[lbl,below=5pt]       at (x2) {$x_2$};
  \node[lbl,above right=3pt] at (x3) {$x_3$};

  \coordinate (axisend) at ($(x1)!1.19!90:(x2)$);
  \draw[dashed] (x1) -- (axisend)
    node[midway,sloped,above=3pt,lbl]
    {good direction for orthogonal projection};

  \coordinate (proj) at ($(x1)!(x3)!(axisend)$);
  \draw[dashed,black] (x3) -- (proj);
  \node[pt,fill=black]             at (proj) {};
  \node[lbl,above right=2pt,black] at (proj) {$x_3'$};
\end{tikzpicture}
\caption{The geometry of good directions for orthogonal projections}
\end{figure}

\begin{proof}[Proof of Theorem \ref{thm:trianglebuildingblockintheplane}]
By the Orponen's radial projection estimate recalled in Theorem \ref{thm:OrponenL^pprojection}, there exists some $p=p(\alpha)>1$ such that
\[
\int \left\|(\pi_{x_1})_* \mu_2\right\|_{L^p(S^1)}^p \, d\mu_1(x_1)<\infty.
\]
In particular, for $\mu_1$-a.e. $x_1\in E_1$, the measure
$(\pi_{x_1})_*\mu_2$ is absolutely continuous with respect to
$\mathcal{H}^1|_{S^1}$. \hfill $(*)$

Let
\[
B_3
:=
\left\{
\theta\in S^1 :
\left(\rho_{\theta^\perp}\right)_*\mu_3 \notin L^2(\mathbb{R})
\right\}.
\]

Write
\[
\mu_{3,\theta}:=(\rho_\theta)_*\mu_3.
\]

By Plancherel, for a finite measure $\nu$ on $\mathbb{R}$,
\[
\nu\in L^2(\mathbb{R})
\quad\Longleftrightarrow\quad
\mathcal{I}_1(\nu)<\infty.
\]

Thus, if $\mu_{3,\theta}\notin L^2(\mathbb{R})$, then
$\dim_S(\mu_{3,\theta})\leq 1$.

Since the map $\theta\mapsto \theta^\perp$ is bi-Lipschitz on $S^1$,
we may use the exceptional set estimate for Sobolev dimensions of
orthogonal projections. By Lemma \ref{thminMattilabook}, for every sufficiently small $\epsilon>0$,
\[
\begin{aligned}
\dim_{\mathcal{H}}(B_3)
&\leq
\dim_{\mathcal{H}}
\left(
\left\{
\theta\in S^1 :
\dim_S(\mu_{3,\theta})<1+\epsilon
\right\}
\right)  \\
&\leq
1+(1+\epsilon)-\dim_S(\mu_3) \\
&=
2+\epsilon-\dim_S(\mu_3).
\end{aligned}
\]

Since $\mu_3$ is $\alpha$-Frostman, we have
$\dim_S(\mu_3)\geq \alpha$. Indeed, $\mathcal{I}_t(\mu_3)<\infty$
for every $t<\alpha$. 

Therefore
\[
\dim_{\mathcal H}(B_3)\leq 2+\epsilon-\alpha.
\]

Choose $\epsilon>0$ so small that $\epsilon<\alpha-1$. Then
\[
\dim_{\mathcal H}(B_3)<1,
\]
and hence
\[
\mathcal{H}^1(B_3)=0.
\]

By $(*)$, for $\mu_1$-a.e. $x_1\in E_1$,
\[
\left((\pi_{x_1})_*\mu_2\right)(B_3)=0.
\]
Equivalently,
\[
\mu_2\left(\pi_{x_1}^{-1}(B_3)\right)=0.
\]
Thus, for $\mu_1$-a.e. $x_1\in E_1$ and for $\mu_2$-a.e.
$x_2\in E_2$,
\[
\pi_{x_1}(x_2)\notin B_3.
\]
By the definition of $B_3$, this means that
\[
\left(\rho_{\pi_{x_1}(x_2)^\perp}\right)_*\mu_3
\in L^2(\mathbb{R})
\]
for $\mu_1\times\mu_2$-a.e. $(x_1,x_2)\in E_1\times E_2$.

Therefore
\[
(\mu_1 \times \mu_2)
\left(
\left\{
(x_1,x_2)\in E_1\times E_2 :
\left(\rho_{\pi_{x_1}(x_2)^\perp}\right)_* \mu_3 \in L^2(\mathbb{R})
\right\}
\right)
=
\mu_1(E_1)\mu_2(E_2)>0.
\]
\end{proof}

\subsection{Applications to triangle chains and fish graph}

By using the $L^2$ building block for triangle areas combined with the edge gluing mechanism proved by induction in Proposition  \ref{prop:trianglesbyedges} in the Appendix, we can get a result on edge-attached triangle chains stated in Theorem \ref{cor:edgetrianglechains}.

Indeed, notice that the hypothesis of Proposition \ref{prop:trianglesbyedges} is satisfied for $\alpha_0=1$ due to our $L^2$ projection building block in Theorem \ref{thm:trianglebuildingblockintheplane}. Then, for $E\subset \R^2$ with $\dim(E)>1$ take an $\alpha$-Frostman measure with $\alpha\in (1,\dim(E))$, and by standard separation Lemma (iterate \cite[Lemma 3]{BIO23} for example), one can find separated subsets $\{E_i\}_{1\leq i\leq k+2}$ with $\mu(E_i)>0$, and one can get an $\alpha$-Frostman measure $\mu_i$ in $E_i$ by restricting $\mu$ to $E_i$, that is, $\mu_i=\mu_{E_i}$, as in Definition \ref{def: restrictedmeasure}.

 We also note that the building block in Theorem \ref{thm:trianglebuildingblockintheplane}, implies in particular that
$$\mu_{1} \left(  \{x_1 \in E_1: \mathcal{L}^1\left( \A_{x_1}(E_2,E_3) \right) >0 \} \right) >0,$$
 which, combined with Proposition \ref{prop: trianglesbyvertices} in the Appendix, implies Theorem \ref{cor: vertextrianglechain}.

As our last application on the plane, we show how to obtain a positive measure of the areas of triangles realized in the fish graph.

\begin{proof}[Proof of Theorem \ref{thm: fishAllD} in the case $d=2$]

Given $E\subset \R^2$ with $\dim(E)>1$, find disjoint separated subsets $E_1,E_2,\dots ,E_6$ with $\mu(E_i)>0$ where $\mu$ is an $\alpha$-Frostman measure in $E$ with $\alpha>1$. Let us check that there exists $(x_1,x_2)\in E_1\times E_2$ (in fact, an abundance of them) such that
\[
\mathcal{L}^2 \left((A_{\Delta})^{fish}_{x_1,x_2}(E_3,E_4,E_5,E_6)\right)>0.
\]

 Intuitively, one wants to glue the fish tail made of vertices $\{v_4,v_5,v_6\}$ at the vertex $v_4$ in the edge-attached $2$-triangle chain induced by the vertices $v_1,v_2,v_3$ and $v_4$. To do that, one starts by pruning $E_4$ to a compact subset $E_4'$ such that $\mu_{E_4}(E_4')>0$ and such that   $(A_{\Delta})_{x_4}(E_5,E_6)$ has positive measure for all $x_4\in E_4'$. Then one uses Proposition \ref{prop:trianglesbyedges} for $E_1,E_2,E_3,E_4'$ obtaining 
   $$\mathcal{L}^2(\{(\A(x_1,x_2,x_3),\A(x_2,x_3,x_4))\colon x_3\in E_3, x_4\in E_4'\})>0,$$
for $(x_1,x_2)$ in a positive $\mu_{E_1}\times \mu_{E_2}$ measure subset $ G_{12}\subset E_1\times E_2$.

A simple Fubini argument finishes the proof.
\end{proof}

\subsection{Projection arguments in higher $d$}\label{subsection:projectionsinhigherd}

It is natural to ask what can be obtained in higher dimensions using
projection arguments. Using Marstrand's slicing theorem \cite[Theorem~6.7]{Mattilabook2015} we obtain a result in $\R^d$ under the threshold $d-1$. This corollary
is most relevant in the case $d=3$, since when $d \ge 4$ the threshold can be
further improved to $\frac{d+2}{2}$ using the Jacobian method developed in
Section~\ref{sec: jacobianmethod}; see Theorem~\ref{thm: k pinned fans} with $k=2$.

\begin{cor}

    Let $d\geq 2$, and let $E\subset \R^d$ be a compact set with $\dim(E)>d-1$, then there exists $x_1,x_2\in E$ such that 
$$\mathcal{L}^1((A_\Delta)_{x_1,x_2}(E))>0.$$
\end{cor}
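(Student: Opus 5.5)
The plan is to reduce to the planar case, Theorem \ref{thm:strengthenedSY}, via Marstrand's slicing theorem applied to $2$-planes. The case $d=2$ is exactly the Shmerkin--Yavicoli theorem, equivalently Theorem \ref{thm:strengthenedSY}, so assume $d\geq 3$ and $\dim(E)>d-1$.

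First, choose $s$ with $d-1<s<\dim(E)$. By Marstrand's slicing theorem \cite[Theorem~6.7]{Mattilabook2015} in the version for $(d-2)$-codimensional slices, for almost every $2$-dimensional subspace $V\in G(d,2)$ there is a positive $\mathcal{H}^{d-2}$-measure set of $a\in V^\perp$ such that
\[
\dim\big(E\cap (V+a)\big)\geq s-(d-2)>1 .
\]
Fix one such affine $2$-plane $W=V+a$ and set $F:=E\cap W$. Then $F$ is a compact subset of $W\cong\R^2$ with $\dim(F)>1$. This step only requires the standard slicing statement with the Hausdorff dimension drop of $d-2$ on a set of slices of positive measure. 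If needed, we replace $F$ by a compact subset of positive $\mathcal{H}^{t}$-measure, $1<t<s-d+2$.

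Second, identify $W$ isometrically with $\R^2$. Since the Euclidean area of a triangle with vertices in $W$ is intrinsic to $W$, the area function on $W^3$ coincides with the planar area function. Applying Theorem \ref{thm:strengthenedSY} (or directly \cite{SY25}) to $F$ gives $x_1,x_2\in F\subset E$ with
\[
\mathcal{L}^1\big(\{\A(x_1,x_2,x_3)\colon x_3\in F\}\big)>0.
\]
Since $(A_\Delta)_{x_1,x_2}(E)$ contains this set, the corollary follows. Theorem \ref{thm:strengthenedSY} in fact gives abundance of pins within the slice, relative to a Frostman measure on $F$.

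The main obstacle is making sure that a single slice of dimension strictly greater than $1$ is available. The footnote's warning about \cite{SY25} in $d\geq 3$ does not apply here, because inside $W$ the distance from $x_3$ to the line through $x_1,x_2$ is computed within a genuine plane, so the height argument is valid. One must invoke the slicing theorem with the correct codimension $m=d-2$: for $\dim E>m$, almost every $m$-codimensional affine plane through positively many translates meets $E$ in dimension $\dim E-m$. That yields exactly the threshold $d-1$, and no quantitative control on the pins beyond existence is claimed.
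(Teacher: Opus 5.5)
Your proof is correct, and it is exactly the alternative argument the paper gives in the Remark following its proof. There, Mattila's slicing theorem is applied with $(d-2)$-dimensional base subspaces; your parametrization by $V\in G(d,2)$ and $a\in V^\perp$ is the same after $V\mapsto V^\perp$. This yields an affine $2$-plane $H$ with $\dim(E\cap H)>1$, and the planar theorem is then applied inside $H$.

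The paper's main proof takes a different and more elementary route. It slices by parallel lines $a+\mathbb{R}\theta$, $a\in\theta^\perp$, for a good direction $\theta$, so it only needs slices of positive dimension. From one such slice it takes two distinct points $x_1,x_2$ on the line $a_{12}+\mathbb{R}\theta$. Since the set $F\subset\theta^\perp$ of good base points has positive $\mathcal{L}^{d-1}$-measure, the distances $\{|a_3-a_{12}|: a_3\in F\}$ form a set of positive length. For any $x_3\in E\cap(a_3+\mathbb{R}\theta)$ one has $\mathrm{Area}(x_1,x_2,x_3)=\tfrac12|x_1-x_2|\,|a_3-a_{12}|$, which finishes the proof.

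The trade-off is as follows. The line-slicing argument uses only Marstrand's slicing theorem and a trivial fact about positive-measure sets. Your route imports the planar Shmerkin--Yavicoli theorem as a black box, and through Theorem~\ref{thm:strengthenedSY} also Orponen's radial projection estimate. In return, your route gives a positive product-measure family of good pin pairs with respect to a Frostman measure on the slice $E\cap H$. By contrast, the pin pairs produced by the line-slicing proof all lie on lines parallel to the fixed direction $\theta$. Neither route gives abundance of pins relative to a Frostman measure on $E$ itself.
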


\begin{proof}
We give a direct proof for \(d\geq 3\). The idea is to slice \(E\) by
parallel lines. This reduces the problem to finding a pinned distance set of
positive measure inside a hyperplane.

Choose \(s\) with
\[
d-1<s<\dim(E).
\]
By Marstrand's slicing theorem, for almost every direction
\(\theta\in S^{d-1}\), the set of base points \(a\in \theta^\perp\) for which
the line \(a+\mathbb R\theta\) has a nontrivial slice of \(E\), in the sense
that
\[
\dim\bigl(E\cap(a+\mathbb R\theta)\bigr)\geq s-(d-1)>0,
\]
has positive \((d-1)\)-dimensional measure in \(\theta^\perp\).

Fix such a direction \(\theta\), and denote this positive-measure set of base
points by \(F\subset \theta^\perp\). Since \(F\) has positive
\((d-1)\)-dimensional measure, there exists \(a_{12}\in F\) such that
\[
\mathcal L^1(\Delta_{a_{12}}(F))>0,
\qquad
\Delta_{a_{12}}(F):=\{|a_3-a_{12}|:a_3\in F\}.
\]
Because \(a_{12}\in F\), the slice \(E\cap(a_{12}+\mathbb R\theta)\) has
positive Hausdorff dimension, and hence contains two distinct points
\(x_1,x_2\).

Now take any \(a_3\in F\), and choose a point
\[
x_3\in E\cap(a_3+\mathbb R\theta).
\]
The points \(x_1,x_2\) lie on the line \(a_{12}+\mathbb R\theta\), while
\(x_3\) lies on the parallel line \(a_3+\mathbb R\theta\). Therefore the height
of \(x_3\) over the edge \((x_1,x_2)\) is exactly \(|a_3-a_{12}|\), and
\[
\mathrm{Area}(x_1,x_2,x_3)
=
\frac12 |x_1-x_2|\,|a_3-a_{12}|.
\]
Since the set of values \(|a_3-a_{12}|\), \(a_3\in F\), has positive
Lebesgue measure, we conclude that
\[
\mathcal L^1\bigl((A_\Delta)_{x_1,x_2}(E)\bigr)>0.
\]
\end{proof}
\begin{rem}

Alternatively, one can reduce the case $d\geq 3$ to $d=2$ by slicing by two planes. This is how it goes. By Mattila's slicing theorem, for any
\[
s \in (d-2,\dim(E)),
\]
and for \(\gamma_{d,d-2}\)-almost every \((d-2)\)-dimensional subspace
\(V \in G(d,d-2)\), one has
\[
\mathcal{H}^{d-2}\Bigl(
\Bigl\{ a \in V :
\dim\bigl(E \cap (V^\perp + a)\bigr) \geq s-(d-2)
\Bigr\}
\Bigr) > 0.
\]
Since \(\dim(E)>d-1\), we may choose \(s>d-1\). Then
\(s-(d-2)>1\), so there exists a two-dimensional affine plane
\(H\subset \mathbb R^d\) such that
\[
\dim(E\cap H)>1.
\]

Applying the planar case to the compact set \(E \cap H\subset H\simeq
\mathbb R^2\), we obtain points \(x_1,x_2\in E \cap H\) such that
\[
\mathcal L^1\bigl((A_\Delta)_{x_1,x_2}(E\cap H)\bigr)>0.
\]
Since
\[
(A_\Delta)_{x_1,x_2}(E\cap H)
\subset
(A_\Delta)_{x_1,x_2}(E),
\]
the desired conclusion follows.
\end{rem}

\appendix
\markboth{Appendix}{Appendix}

\section{Structural Theorems for Triangle Areas}\label{structuralThm}

In this appendix, we use chains as a model example to illustrate how Fubini-type arguments can leverage results on triangle areas, with one or two pins, as building blocks for deriving corresponding results on triangle areas realized by chains of triangles in both the vertex-attached and edge-attached settings. Our approach may be viewed as a triangle-chain analogue of the chain results of Ou and Taylor, obtained via pinned distance sets \cite{OT22}. For clarity and readability, we follow the slightly more streamlined presentation from \cite{OT22}, in which vertices are drawn from separated sets, while adapting the statements to the triangle-chain setting and highlighting the role played by triangle areas.

In the case of edge-attached chains of triangles, we only obtain results in the plane $(d=2)$ since we are using orthogonal projections onto lines to recover triangle areas. However, dimension $2$ is where such results will be the most useful for us, since in dimensions $d\geq 3$ the Jacobian method developed in Section \ref{sec: jacobianmethod} can induce results for chains of triangle areas sharing edges more directly under the same threshold that one has for $2$-stars.

To simplify the notation, we will denote $\mathcal{L}^k(A)$ alternatively by $|A|_{k}$.

\subsection{Single-pinned triangle chain with triangles attached at the vertices}

 Given points $x,y,z \in \R^d$, the Area function $\text{Area}(x,y,z)$ computes the area of the triangle formed by the points $x,y,z$. We define the $k-$chain of areas of triangles attached at the vertices as the map
$$A^{TC_k^{vtx}}(x_1,x_2,\hdots, x_{2k+1}) := \left( \A(x_1,x_2,x_3), \A(x_{3},x_{4},x_{5}), \hdots, \A(x_{2k-1},x_{2k},x_{2k+1}) \right) $$
 and the pinned $k-$chain of areas of triangles as 
$$ A^{TC_k^{vtx}}_{x}(x_2, \hdots, x_{2k+1} ):= A^{TC_k^{vtx}} (x,x_2, \hdots, x_{2k+1}) .$$

We will also denote 
$$A_x(E_2,E_3):=\{\A(x,x_2,x_3)\colon x_2\in E_2,x_3\in E_3\}$$
and 
$$ A^{TC_k^{vtx}}_x(E_2,\dots ,E_{2k+1}):=\{\A^{TC_k^{vtx}}_{x}(x_2, \hdots, x_{2k+1})\colon x_i\in E_i,\, 2\leq i\leq 2k+1\} .$$

\begin{prop}[Vertex attached triangle-chains from pinned triangle areas]
\label{prop: trianglesbyvertices}
Let $d \geq 2$, and $\alpha_0 >0$. Suppose that for any $\alpha>\alpha_0$ and any $\alpha$-Frostman measures $\mu_1,\mu_2, \mu_3$ supported in separated compact sets $E_1,E_2, E_3\subset \R^d$ respectively, one has abundance of good pins in $E_1$ for areas realized in $E_2,E_3$ in the sense that
$$  \mu_{1} \left(  \{x \in E_1: \left| A_{x}(E_2,E_3) \right|_1 >0 \} \right) >0.$$
Then, for any $k\geq 1$, any $\alpha>\alpha_0$, and any given $(2k+1)$ separated compact sets $E_1,E_2, \dots ,E_{2k+1}$ in $\R^d$ equipped with $\alpha$-dimensional Frostman measures $\mu_1,\mu_2,
\dots ,\mu_{2k+1}$, respectively, one has 
$$ \mu_1\left(\{x\in E_1\colon \left|  A^{TC_k^{vtx}}_x(E_2,\hdots, E_{2k+1})  \right|_k >0\}\right)>0.$$
\end{prop}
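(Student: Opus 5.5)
We argue by induction on $k$. The case $k=1$ is exactly the hypothesis applied to $\mu_1,\mu_2,\mu_3$ on $E_1,E_2,E_3$. For the inductive step we peel off the \emph{last} $k-1$ triangles, which form a vertex-attached chain pinned at $x_3$, and glue them to the first triangle through a Fubini argument. This follows the chain strategy of Ou and Taylor \cite{OT22}.

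\emph{Step 1: pruning the gluing set.} Apply the inductive hypothesis (the statement for $k-1$) to the separated sets $E_3,E_4,\dots,E_{2k+1}$ with measures $\mu_3,\dots,\mu_{2k+1}$. This gives a set
\[
E_3':=\{x_3\in E_3 \colon |A^{TC_{k-1}^{vtx}}_{x_3}(E_4,\dots,E_{2k+1})|_{k-1}>0\}
\]
with $\mu_3(E_3')>0$. We need $E_3'$ to carry an $\alpha$-Frostman measure, so we first check measurability and then pass to a compact subset $E_3''\subset E_3'$ with $\mu_3(E_3'')>0$ by inner regularity. Measurability holds because the map $x_3\mapsto |A^{TC_{k-1}^{vtx}}_{x_3}(\cdots)|$ is the measure of a continuous image of a compact set, so it is upper semicontinuous. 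Alternatively, one can work throughout with the measurable sets provided by the construction. The restriction $\mu_3|_{E_3''}$ is still $\alpha$-Frostman by Definition \ref{def: restrictedmeasure}.

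\emph{Step 2: applying the base case to the pruned set.} Apply the hypothesis to $\mu_1,\mu_2,\mu_3|_{E_3''}$ on $E_1,E_2,E_3''$. The sets remain separated. This gives a set $G\subset E_1$ with $\mu_1(G)>0$ such that $|A_x(E_2,E_3'')|_1>0$ for every $x\in G$.

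\emph{Step 3: Fubini.} Fix $x\in G$. The set $S:=A^{TC_k^{vtx}}_x(E_2,\dots,E_{2k+1})\subset\R^k$ contains
\[
\bigcup_{a\in A_x(E_2,E_3'')}\{a\}\times A^{TC_{k-1}^{vtx}}_{x_3(a)}(E_4,\dots,E_{2k+1}),
\]
where for each value $a$ we choose a witness $(x_2,x_3(a))\in E_2\times E_3''$ realizing it. By construction $x_3(a)\in E_3''$, so every fiber $S_a$ has positive $(k-1)$-dimensional measure. The base $A_x(E_2,E_3'')$ has positive $1$-dimensional measure. Provided $S$ is measurable, Fubini then yields $|S|_k>0$. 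Measurability is not an issue, since $S$ is a continuous image of a compact set and hence compact. Fubini applies to the compact set $S$ directly: its fibers over a positive-measure set of $a$ each contain a set of positive measure, so $|S|_k=\int |S_a|_{k-1}\,da>0$. The fiber lower bound needs no measurable selection of witnesses, because the fiber $S_a$ contains a set of positive measure for each $a$ individually. Since this holds for every $x\in G$, the conclusion follows.

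The main obstacle is the technical bookkeeping in Step 1. We must make sure that the set of good $x_3$ is measurable, or at least contains a compact set of positive $\mu_3$ measure, so that the hypothesis can be reapplied to a restricted Frostman measure. Upper semicontinuity of $y\mapsto |K_y|$ for a continuously varying family of compact sets $K_y$ should settle this. The remaining steps are routine.
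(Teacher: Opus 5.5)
Your proposal is correct and follows the paper's proof essentially step for step. Both arguments use induction on $k$, apply the $(k-1)$ case to $E_3,\dots,E_{2k+1}$, and pass to a compact set of positive $\mu_3$-measure inside the good set. They then apply the base case to $E_1,E_2$ and that compact set, and conclude by Fubini over the base $A_x(E_2,E_3'')$ using witnesses $x_3(a)$. Your extra measurability remarks fill in details the paper leaves implicit: the upper semicontinuity of $x_3\mapsto |A^{TC_{k-1}^{vtx}}_{x_3}(E_4,\dots,E_{2k+1})|_{k-1}$, which makes the good set $F_\sigma$, and the compactness of $S$.
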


\begin{proof}
    The proof uses induction in $k$. The case $k=1$ is true by assumption. Now let $k\geq 2$ and assume the statement is true for $k-1$.

    By the induction hypothesis applied to the separated compact sets $E_{3},E_4,\dots ,E_{2k+1}$, we have that there exists a set
\[
G_3\subset E_3
\]
such that
\[
\mu_3(G_3)>0
\]
and for every \(z\in G_3\),
\[
\left|
A_z^{TC_{k-1}^{vtx}}
(E_4,\dots,E_{2k+1})
\right|_{k-1}>0.
\]

    Choose a compact subset \(K_3\subset G_3\) such that
$\mu_3(K_3)>0.$

    By assumption (case $k=1$) applied to the compact sets $E_1$, $E_2$ and $K_3$ equipped with measures $\mu_1,\mu_2,(\mu_3)_{K_3}$, there exists \(E_1^{(k)}\subset E_1\) such that
\[
\mu_1(E_1^{(k)})>0
\]
and for every \(x\in E_1^{(k)}\),
\[
\mathcal L^1\big(\A_x(E_2,K_3)\big)>0.
\]

    Consider a point $x \in E_1^{(k)}$. Notice that
    
    $$ \left|  A^{TC_k^{vtx}}_x(E_2,E_3,\dots ,E_{2k+1}) \right|_k \geq \int_{R} \left| \left[ A^{TC_k^{vtx}}_x(E_2,E_3, \dots, E_{2k+1}) \right]_a \right|_{k-1} \mathrm{d}a, $$
    where $R = \A_x(E_2,K_3 )$ (i.e., set of areas of triangles formed by the pinned point $x \in E_1$, and points in the sets $E_2$ and $K_3\subset E_{3} $), and 
$$ \left[ A^{TC_k^{vtx}}_x(E_2,E_3,\dots, E_{2k+1}) \right]_a := \lbrace \vec{v}\in \R^{k-1}: (a, \vec{v}) \in  A^{TC_k^{vtx}}_x(E_2,E_3, \dots ,E_{2k+1})  \rbrace \subset \R^{k-1}.$$

Since $|R|_1>0$, it is enough to show the slices satisfy $$\displaystyle{ \left| \left[ A^{TC_k^{vtx}}_x (E_2,E_3, \dots ,E_{2k+1}) \right]_a \right|_{k-1} >0} \text{ for all } a \in R.$$

Given $a\in R$, consider $x_a \in K_3\subset G_3$, and $y_a \in E_2$ such that $\text{Area}(x,y_a,x_a)=a$, then by definition of $G_3$, we have

$$ \left| \left[ A^{TC_k^{vtx}}_x (E_2,E_3, \dots ,E_{2k+1}) \right]_a \right|_{k-1} \geq \left| A^{TC_{k-1}^{vtx}}_{x_a}( E_{4},E_{5},\dots ,E_{2k+1}) \right|_{k-1} > 0, $$
  which finishes the proof of the proposition.  
\end{proof}

\subsection{Double-pinned triangle chain with triangles attached at edges}

Using similar notation as above, we can define an area set associated with $k$-triangle chains attached by edges as
$$A^{TC_k^{edge}}(x_1,x_2,\hdots, x_{k+2}) = \left( \A(x_1,x_2,x_3), \A(x_{2},x_{3},x_{4}), \hdots, \A(x_{k},x_{k+1},x_{k+2}) \right).$$ We can also define the double pinned $k-$chain of areas of triangles attached at the edges as the map 
$$ (A^{TC_k^{edge}})_{x,y} (x_3, \hdots, x_{k+2})= \A^{TC_k^{edge}}(x,y,x_3, \hdots, x_{k+2}).$$

\begin{prop} \label{prop:trianglesbyedges} 
 Suppose that $\alpha_0\in (0,2)$ has the following property: For any $\alpha>\alpha_0$ and $\alpha$-Frostman measures $\mu_1,\mu_2, \mu_3$ supported in separated compact sets $E_1,E_2, E_3\subset \R^2$ respectively, one has abundance of pairs of pins in $E_1\times E_2$ for which the orthogonal projection of $\mu_3$ belongs to $L^2$, that is,
$$  \mu_{1} \times \mu_{2} \left( \left\{ (x,y) \in E_1 \times E_2 : (\rho_{l_{xy}^{\perp}})_*(\mu_{3})\in L^2(\R)  \right\} \right) >0 $$
where $\rho_{l_{xy}^{\perp}}$ is the orthogonal projection on the direction perpendicular to the line connecting $x$ and $y$.

 Then, for any $k \geq 1$, any $\alpha>\alpha_0$ and any separated compact sets $E_1,E_2, \cdots, E_{k+2} \subset \R^2$ equipped with $\alpha$-Frostman measures $\mu_1,\mu_2,\dots ,\mu_{k+2}$, respectively, one has
 $$\mu_1\times \mu_2\left(\left\{(x,y)\in E_1\times E_2\colon \left| (A^{TC_k^{edge}})_{x,y}(E_3,\cdots, E_{k+2})  \right|_k >0\right\}\right)>0, $$
where $(A^{TC_k^{edge}})_{x,y}(E_3,\dots, E_{k+2})=\{A^{TC_k^{edge}}(x,y,x_3,\dots, x_{k+2})\colon x_i\in E_i,3\leq i\leq k+2\}$.
\end{prop}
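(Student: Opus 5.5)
We argue by induction on $k$, in the Fubini style of Proposition~\ref{prop: trianglesbyvertices}. The new point is that the \emph{$L^2$ form} of the hypothesis lets us pass to subsets of $E_3$ that depend on the pin $y$.

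\textbf{Base case and the key observation.} Fix $(x,y)$ and write $\theta_{xy}$ for the direction perpendicular to the line through $x$ and $y$. Then
\[
\A(x,y,z)=\tfrac12|x-y|\,\bigl|\rho_{\theta_{xy}}(z)-\rho_{\theta_{xy}}(x)\bigr| .
\]
So $z\mapsto \A(x,y,z)$ is the projection $\rho_{\theta_{xy}}$ followed by a map $t\mapsto c|t-t_0|$ with $c>0$, which is locally bi-Lipschitz away from $t_0$. Suppose $(\rho_{\theta_{xy}})_*\mu_3\in L^2(\R)$ and $K\subset E_3$ is any Borel set with $\mu_3(K)>0$. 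Then $(\rho_{\theta_{xy}})_*(\mu_3|_K)\le(\rho_{\theta_{xy}})_*\mu_3$, so it is also absolutely continuous, with positive total mass. Hence $\rho_{\theta_{xy}}(K)$ has positive Lebesgue measure, and therefore so does $\{\A(x,y,z):z\in K\}$ (we remove the single point $t_0$ and use that the map $t\mapsto c|t-t_0|$ is at most two-to-one). Taking $K=E_3$ gives the case $k=1$ directly from the hypothesis.

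\textbf{Inductive step.} Assume the statement for $k-1$ and apply it to $E_2,E_3,\dots,E_{k+2}$. This gives a set $G_{23}\subset E_2\times E_3$ with $(\mu_2\times\mu_3)(G_{23})>0$ such that $\bigl|(A^{TC_{k-1}^{edge}})_{y,z}(E_4,\dots,E_{k+2})\bigr|_{k-1}>0$ for every $(y,z)\in G_{23}$. We may shrink $G_{23}$ to a compact set of positive measure. By Fubini, the set $Y=\{y\in E_2:\mu_3((G_{23})_y)>0\}$ has $\mu_2(Y)>0$; choose a compact $Y'\subset Y$ with $\mu_2(Y')>0$. The measure $(\mu_2)_{Y'}$ is still $\alpha$-Frostman, so the hypothesis applied to $\mu_1,(\mu_2)_{Y'},\mu_3$ yields a set $\mathcal G\subset E_1\times Y'$ of positive $\mu_1\times\mu_2$ measure on which $(\rho_{\theta_{xy}})_*\mu_3\in L^2$.

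\textbf{Fubini step.} Fix $(x,y)\in\mathcal G$. By the key observation with $K=(G_{23})_y$, the set $R=\{\A(x,y,z):z\in (G_{23})_y\}$ satisfies $|R|_1>0$. Slicing in the first coordinate,
\[
\bigl|(A^{TC_k^{edge}})_{x,y}(E_3,\dots,E_{k+2})\bigr|_k\ \ge\ \int_R \bigl|[\,\cdot\,]_a\bigr|_{k-1}\,da .
\]
For each $a\in R$, pick $z_a\in(G_{23})_y$ with $\A(x,y,z_a)=a$. The slice at $a$ contains $(A^{TC_{k-1}^{edge}})_{y,z_a}(E_4,\dots,E_{k+2})$, which has positive $(k-1)$-measure because $(y,z_a)\in G_{23}$. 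Hence the integral is positive for every $(x,y)\in\mathcal G$, which proves the claim for $k$.

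\textbf{Expected obstacle.} The delicate point is exactly the $y$-dependence of the admissible set $(G_{23})_y\subset E_3$. A mere ``positive measure of areas'' hypothesis for the full $E_3$ would not transfer to these fibers. The $L^2$ domination handles this, but only because we first restrict $\mu_2$ to $Y'$, so that the good pairs produced by the hypothesis have $y$ in the region where the fibers are large. The remaining issues are routine measurability checks: compactness of $G_{23}$ makes the fibers Borel and their images under continuous maps analytic, hence Lebesgue measurable.
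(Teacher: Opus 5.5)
Your proposal is correct and follows essentially the same route as the paper. Both arguments apply the $(k-1)$-chain statement to $E_2,\dots,E_{k+2}$, restrict $\mu_2$ to pins $y$ whose $\mu_3$-fibers are nontrivial before invoking the $L^2$ hypothesis, and then combine the domination $(\rho_{\theta_{xy}})_*(\mu_3|_K)\le(\rho_{\theta_{xy}})_*\mu_3$ with Fubini slicing in the first area coordinate. The only differences are cosmetic: you ask merely for $\mu_3((G_{23})_y)>0$ rather than the paper's $\ge\tfrac{c}{2}\mu_3(E_3)$, and you spell out the base case and the measurability details that the paper leaves implicit.
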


\begin{proof}[Proof of Proposition \ref{prop:trianglesbyedges}]
The proof uses induction in $k\geq 1$, and the base case $k=1$ is true by assumption. Now assume that $k\geq 2$ and that the statement is true for $k-1$. Take $k+2$ separated compact sets $E_1,E_2,\dots, E_{k+2}\subset \R^2$, and $\alpha$-Frostman measures $\mu_i$ supported on $E_i$.

First apply the induction hypothesis to $E_2,E_3,E_4, \dots ,E_{k+2}$. Then 
$$\mu_2\times \mu_3\left(E_{23}:=\left\{(x_2,x_3)\in E_2\times E_3\colon \left|  (\A^{TC_{k-1}^{edge}})_{x_2,x_3}(E_4,\cdots, E_{k+2})  \right|_{k-1} >0\right\}\right)>0.$$ 

Let $$c:=\frac{(\mu_2\times \mu_{3})(E_{23})}{\mu_2(E_2)\mu_3(E_3)}>0$$

There exists a set $E_2'\subset E_2$, with $\mu_2(E_2')>0$ such that 
for all $x_2\in E_2'$, one has 
$$\mu_{3}(\{x_3\in E_3\colon (x_2,x_3)\in E_{23}\})\geq \frac{c}{2}\mu_3(E_3).$$
Define $E_3'(x_2):=\{x_3\in E_3\colon (x_2,x_3)\in E_{23}\}$. Then for all $x_2\in E_2',$ one has $\mu_3(E_3'(x_2))>0$ and

$$\left| (\A^{TC_{k-1}^{edge}})_{x_2,x_3}(E_4,\cdots, E_{k+2}) \right|_{k-1} >0,\,\text{for all }  x_3\in E_{3}'(x_2).$$

\definecolor{diskgray}{gray}{0.85}
\definecolor{innerblue}{RGB}{180,200,255}

\tikzset{
  ball/.style   = {circle, draw=none, fill=diskgray, minimum size=1.6cm},
  inner/.style  = {circle, draw=none, fill=innerblue, minimum size=1.1cm},
  vertex/.style = {circle, fill=blue, draw=none, inner sep=1pt}
}

\begin{figure}[h]
\centering
\scalebox{0.8}{
\begin{tikzpicture}[scale=1.6, every node/.style={font=\small}]

  \coordinate (E1)      at (-3.0,  0.0);
  \coordinate (E2)      at (-0.9, -1.1);
  \coordinate (E3)      at (-0.5,  0.7);
  \coordinate (E4)      at ( 0.8, -0.9);
  \coordinate (E5)      at ( 1.2,  1.0);

  \coordinate (Ek)      at ( 2.6, -0.7);
  \coordinate (Ekplus1) at ( 3.3,  0.9);
  \coordinate (Ekplus2) at ( 4.2,  0.1);

  \node[ball,minimum size=2.2cm] at (E1)      {};
  \node[ball,minimum size=2.5cm]                    at (E2)      {};
  \node[ball,minimum size=2.2cm] at (E3)      {};
  \node[ball,minimum size=1.5cm] at (E4)      {};
  \node[ball]                    at (E5)      {};
  \node[ball,minimum size=2.5cm] at (Ek)      {};
  \node[ball]                    at (Ekplus1) {};
  \node[ball]                    at (Ekplus2) {};

  \node at ($(E1)      + (-0.2,0.35)$) {$E_1$};
  \node at ($(E2)      + (-0.7,0.25)$) {$E_2$};
  \node at ($(E3)      + (-0.45,0.25)$) {$E_3$};
  \node at ($(E4)      + ( 0.00,-0.35)$) {$E_4$};
  \node at ($(E5)      + ( 0.35,0.25)$) {$E_5$};
  \node at ($(Ek)      + (-0.45,-0.15)$) {$E_k$};
  \node at ($(Ekplus1) + ( 0.40,0.25)$) {$E_{k+1}$};
  \node at ($(Ekplus2) + ( 0.45,-0.10)$) {$E_{k+2}$};

  \node[inner] at (E2) {};
  \node[inner] at (E3) {};

  \node[blue] at ($(E2) + (0.4,-0.4)$) {$E_2'$};
  \node[blue] at ($(E3) + (0.1, 0.5)$) {$E_3'(x_2)$};

  \node[vertex,label=below:$x_2$] at (E2) {};
  \node[vertex,label=right:$x_3$] at (E3) {};
  \node[vertex]                 at (E4) {};
  \node[vertex]                 at (E5) {};
  \node[vertex]                 at (Ek) {};
  \node[vertex]                 at (Ekplus1) {};
  \node[vertex]                 at (Ekplus2) {};

  \draw[blue,thick] (E2) -- (E3);
  \draw[blue,thick] (E2) -- (E4);
  \draw[blue,thick] (E3) -- (E4);
  \draw[blue,thick] (E3) -- (E5);
  \draw[blue,thick] (E4) -- (E5);

  \draw[blue,thick] (Ek)      -- (Ekplus1);
  \draw[blue,thick] (Ekplus1) -- (Ekplus2);
  \draw[blue,thick] (Ekplus2) -- (Ek);

  \node[blue] at (2.1,0.0) {$\cdots$};

\end{tikzpicture}}
\caption{Schematic configuration of the sets $E_1,\dots,E_{k+2}$ 
with special points $x_2\in E_2'$ and $x_3\in E_3'(x_2)$.}
\end{figure}

Next use the hypothesis with $k=1$ for the sets $E_1,E_2',E_3$, and $\alpha$-Frostman measures, $\mu_1,\mu_2':=(\mu_2)_{E_2'},\mu_3$ so that 
$$ (\mu_{1} \times \mu_{2}') \left( G_{12} \right):= (\mu_{1} \times \mu_{2}') \left( \left\{ (x_1,x_2) \in E_1 \times E_2' : (\rho_{l_{x_1x_2}^{\perp}})_*(\mu_3) \in L^2(\R) \right\} \right) >0. $$

Note that since $\mu_2'$ is a restriction of $\mu_2$ that also implies $\mu_1\times \mu_2(G_{12})>0$.

Next we use a Fubini argument to get positive $k$ dimensional measure of $k$-areas. More precisely, we claim that for any $(x_1,x_2)\in G_{12}$,
$$\left|  (\A^{TC_k^{edge}})_{x_1,x_2}(E_3'(x_2),E_4,\cdots, E_{k+2})  \right|_{k} >0.$$
    
   \tikzset{
  redinner/.style = {circle, draw=none, fill=red!35, minimum size=1cm}
}
    
\begin{figure}[h]
\centering
\scalebox{0.8}{
\begin{tikzpicture}[scale=1.6, every node/.style={font=\small}]

  \coordinate (E1)      at (-3.0,  0.0);
  \coordinate (E2)      at (-0.9, -1.1);
  \coordinate (E3)      at (-0.5,  0.7);
  \coordinate (E4)      at ( 0.8, -0.9);
  \coordinate (E5)      at ( 1.2,  1.0);

  \coordinate (Ek)      at ( 2.6, -0.7);
  \coordinate (Ekplus1) at ( 3.3,  0.9);
  \coordinate (Ekplus2) at ( 4.2,  0.1);

  \node[ball,minimum size=2.2cm] at (E3)      {};
  \node[ball,minimum size=1.5cm] at (E4)      {};
  \node[ball]                    at (E5)      {};
  \node[ball,minimum size=2.5cm] at (Ek)      {};
  \node[ball]                    at (Ekplus1) {};
  \node[ball]                    at (Ekplus2) {};

  \node at ($(E3)      + (-0.8,0.25)$) {$E_3$};
  \node at ($(E4)      + ( 0.00,-0.35)$) {$E_4$};
  \node at ($(E5)      + ( 0.35,0.25)$) {$E_5$};
  \node at ($(Ek)      + (-0.45,-0.15)$) {$E_k$};
  \node at ($(Ekplus1) + ( 0.40,0.25)$) {$E_{k+1}$};
  \node at ($(Ekplus2) + ( 0.45,-0.10)$) {$E_{k+2}$};

  \node[inner] at (E3) {};

  \node[blue] at ($(E3) + (0.1, 0.5)$) {$E_3'(x_2)$};



  \coordinate (X1) at ($(E1) + (0.15,-0.15)$);
  \coordinate (X2) at (E2);
  \coordinate (X3) at (E3);

  \node[vertex,label=below:$x_1$] at (X1) {};
  \node[vertex,label=below:$x_2$] at (X2) {};
  \node[vertex,label=right:$x_3$] at (X3) {};
  \node[vertex]                 at (E4) {};
  \node[vertex]                 at (E5) {};
  \node[vertex]                 at (Ek) {};
  \node[vertex]                 at (Ekplus1) {};
  \node[vertex]                 at (Ekplus2) {};

  \draw[blue,thick] (E2) -- (E3);
  \draw[blue,thick] (E2) -- (E4);
  \draw[blue,thick] (E3) -- (E4);
  \draw[blue,thick] (E3) -- (E5);
  \draw[blue,thick] (E4) -- (E5);

  \draw[red,thick] (X1) -- (X2);
  \draw[red,thick] (X1) -- (X3);
  \draw[red,thick] (X2) -- (X3);

  \draw[blue,thick] (Ek)      -- (Ekplus1);
  \draw[blue,thick] (Ekplus1) -- (Ekplus2);
  \draw[blue,thick] (Ekplus2) -- (Ek);

  \node[blue] at (2.1,0.0) {$\cdots$};

\end{tikzpicture}}
\caption{Configuration with fixed $(x_1,x_2)\in G_{12}$.}
\end{figure}

    Indeed,

    \begin{equation} \label{chain2}
     \begin{split}
       & \left| (\A^{TC_k^{edge}})_{x_1,x_2}(E_3'(x_2), E_4, \hdots, E_{k+2}) \right|_k \\
        & \geq\int_{R} \left| \left[ (\A^{TC_k^{edge}})_{x_1,x_2} (E_{3}'(x_2), E_{4}, \hdots, E_{k+2}) \right]_a \right|_{k-1} \mathrm{d} a,
        \end{split}
    \end{equation}
    where $R = \A_{x_1,x_2} \left ( {E_3'(x_2)} \right )$, and 

        $$ \left[ (\A^{TC_k^{edge}})_{x_1,x_2} (E_{3}'(x_2),E_4, \hdots, E_{k+2} ) \right]_a = \lbrace \vec{v}\in \R^{k-1}: (a, \vec{v}) \in  (\A^{TC_k^{edge}} )_{x_1,x_2}(E_{3}'(x_2),E_4 \hdots, E_{k+2} )  \rbrace $$
                    
    Note that for any $a\in R$, one can fix $x_{3}(a)\in E_3'(x_2)$ such that $\text{Area}(x_1,x_2,x_3(a))=a$, and then since $x_2\in E_2'$ and $x_{3}(a)\in E_3'(x_2)$, it holds that 

\begin{align*}
    0<&\left|  (\A^{TC_{k-1}^{edge}})_{x_2,x_3(a)}(E_4,\hdots, E_{k+2})  \right|_{k-1}\\
    \leq & \left| \left[ (\A^{TC_k^{edge}})_{x_1,x_2} (E_{3}'(x_2), E_{4}, \hdots, E_{k+2}) \right]_a \right|_{k-1}.
    \end{align*}

    Going back to (\ref{chain2}), we are reduced to checking that $R$ has positive Lebesgue measure. This follows because $\mu_3(E_3'(x_2))>0$ and $\mu_3|_{E_3'(x_2)}\leq \mu_3$, so the pushforward measure
$$(\rho_{l_{x_1x_2}^{\perp}})_{*}(\mu_3|_{E_3'(x_2)})$$
admits a nonzero $L^2$ density on $\R$. In particular, its support has positive Lebesgue measure. Since we are in the plane ($d=2$) that implies
$$|\A_{x_1,x_2}(E_3'(x_2))|_1>0.$$
\end{proof}

\printbibliography

@article{pinnedtrees,
title = {Nonempty interior of pinned distance and tree sets},
journal = {Advances in Mathematics},
volume = {493},
pages = {110917},
year = {2026},
issn = {0001-8708},
doi = {https://doi.org/10.1016/j.aim.2026.110917},
url = {https://www.sciencedirect.com/science/article/pii/S0001870826001398},
author = {Tainara Borges and Benjamin Foster and Yumeng Ou and Eyvindur Palsson}
}

@misc{BFOPRpaperI,
      title={Falconer-type results for any finite graph with multiple pins}, 
      author={Tainara Borges and Ben Foster and Yumeng Ou and Eyvindur Palsson and Francisco Romero Acosta},
      year={2026},
      eprint={2603.01954},
      archivePrefix={arXiv},
      primaryClass={math.CA},
      url={https://arxiv.org/abs/2603.01954}, 
}

@article {IPPS22,
    AUTHOR = {Iosevich, Alex and Pham, Minh-Quy and Pham, Thang and Shen,
              Chun-Yen},
     TITLE = {Pinned simplices and connections to product of sets on
              paraboloids},
   JOURNAL = {Indiana Univ. Math. J.},
  FJOURNAL = {Indiana University Mathematics Journal},
    VOLUME = {74},
      YEAR = {2025},
    NUMBER = {3},
     PAGES = {647--668},
      ISSN = {0022-2518,1943-5258},
   MRCLASS = {52C10 (28A80 42B10)},
  MRNUMBER = {4946877},
}

@article {Falconer85,
    AUTHOR = {Falconer, K. J.},
     TITLE = {On the {H}ausdorff dimensions of distance sets},
   JOURNAL = {Mathematika},
  FJOURNAL = {Mathematika. A Journal of Pure and Applied Mathematics},
    VOLUME = {32},
      YEAR = {1985},
    NUMBER = {2},
     PAGES = {206--212},
      ISSN = {0025-5793},
   MRCLASS = {28A75 (28A05)},
  MRNUMBER = {834490},
MRREVIEWER = {S.\ J.\ Taylor},
       DOI = {10.1112/S0025579300010998},
       URL = {https://doi.org/10.1112/S0025579300010998},
}

@misc{DORZ23,
      title={New improvement to Falconer distance set problem in higher dimensions}, 
      author={Xiumin Du and Yumeng Ou and Kevin Ren and Ruixiang Zhang},
      year={2024},
      eprint={2309.04103},
      archivePrefix={arXiv},
      primaryClass={math.CA},
      url={https://arxiv.org/abs/2309.04103}, 
}

@article {GIOW20,
    AUTHOR = {Guth, Larry and Iosevich, Alex and Ou, Yumeng and Wang, Hong},
     TITLE = {On {F}alconer's distance set problem in the plane},
   JOURNAL = {Invent. Math.},
  FJOURNAL = {Inventiones Mathematicae},
    VOLUME = {219},
      YEAR = {2020},
    NUMBER = {3},
     PAGES = {779--830},
      ISSN = {0020-9910,1432-1297},
   MRCLASS = {42B20 (28A80)},
  MRNUMBER = {4055179},
MRREVIEWER = {Jonathan\ MacDonald\ Fraser},
       DOI = {10.1007/s00222-019-00917-x},
       URL = {https://doi.org/10.1007/s00222-019-00917-x},
}

@article {Liu19,
    AUTHOR = {Liu, Bochen},
     TITLE = {An {$L^2$}-identity and pinned distance problem},
   JOURNAL = {Geom. Funct. Anal.},
  FJOURNAL = {Geometric and Functional Analysis},
    VOLUME = {29},
      YEAR = {2019},
    NUMBER = {1},
     PAGES = {283--294},
      ISSN = {1016-443X,1420-8970},
   MRCLASS = {28A75 (28A78 42B10)},
  MRNUMBER = {3925111},
MRREVIEWER = {Lars\ Olsen},
       DOI = {10.1007/s00039-019-00482-8},
       URL = {https://doi.org/10.1007/s00039-019-00482-8},
}

@book {Mattilabook2015,
    AUTHOR = {Mattila, Pertti},
     TITLE = {Fourier analysis and {H}ausdorff dimension},
    SERIES = {Cambridge Studies in Advanced Mathematics},
    VOLUME = {150},
 PUBLISHER = {Cambridge University Press, Cambridge},
      YEAR = {2015},
     PAGES = {xiv+440},
      ISBN = {978-1-107-10735-9},
   MRCLASS = {28-02 (28A15 28A78 28A80 42B10 60J65)},
  MRNUMBER = {3617376},
MRREVIEWER = {Benjamin\ Steinhurst},
       DOI = {10.1017/CBO9781316227619},
       URL = {https://doi.org/10.1017/CBO9781316227619},
}

@article {IMT12,
    AUTHOR = {Iosevich, Alex and Mourgoglou, Mihalis and Taylor, Krystal},
     TITLE = {On the {M}attila-{S}j\"olin theorem for distance sets},
   JOURNAL = {Ann. Acad. Sci. Fenn. Math.},
  FJOURNAL = {Annales Academi\ae\ Scientiarum Fennic\ae. Mathematica},
    VOLUME = {37},
      YEAR = {2012},
    NUMBER = {2},
     PAGES = {557--562},
      ISSN = {1239-629X,1798-2383},
   MRCLASS = {28A75 (42B20 52C10)},
  MRNUMBER = {2987085},
MRREVIEWER = {Alain\ Rivi\`ere},
       DOI = {10.5186/aasfm.2012.3732},
       URL = {https://doi.org/10.5186/aasfm.2012.3732},
}

@article {GILP15,
    AUTHOR = {Greenleaf, Allan and Iosevich, Alex and Liu, Bochen and
              Palsson, Eyvindur},
     TITLE = {A group-theoretic viewpoint on {E}rd\"os-{F}alconer problems
              and the {M}attila integral},
   JOURNAL = {Rev. Mat. Iberoam.},
  FJOURNAL = {Revista Matem\'atica Iberoamericana},
    VOLUME = {31},
      YEAR = {2015},
    NUMBER = {3},
     PAGES = {799--810},
      ISSN = {0213-2230,2235-0616},
   MRCLASS = {42B20 (52C10)},
  MRNUMBER = {3420476},
MRREVIEWER = {Tuomas\ P.\ Hyt\"onen},
       DOI = {10.4171/RMI/854},
       URL = {https://doi.org/10.4171/RMI/854},
}

@article {Orponen19,
    AUTHOR = {Orponen, Tuomas},
     TITLE = {On the dimension and smoothness of radial projections},
   JOURNAL = {Anal. PDE},
  FJOURNAL = {Analysis \& PDE},
    VOLUME = {12},
      YEAR = {2019},
    NUMBER = {5},
     PAGES = {1273--1294},
      ISSN = {2157-5045,1948-206X},
   MRCLASS = {28A80 (28A78 42C10)},
  MRNUMBER = {3892404},
       DOI = {10.2140/apde.2019.12.1273},
       URL = {https://doi.org/10.2140/apde.2019.12.1273},
}

@article {EIT11,
    AUTHOR = {Eswarathasan, Suresh and Iosevich, Alex and Taylor, Krystal},
     TITLE = {Fourier integral operators, fractal sets, and the regular
              value theorem},
   JOURNAL = {Adv. Math.},
  FJOURNAL = {Advances in Mathematics},
    VOLUME = {228},
      YEAR = {2011},
    NUMBER = {4},
     PAGES = {2385--2402},
      ISSN = {0001-8708,1090-2082},
   MRCLASS = {42B08 (28A80 44A12)},
  MRNUMBER = {2836125},
MRREVIEWER = {Herv\'e\ Queff\'elec},
       DOI = {10.1016/j.aim.2011.07.012},
       URL = {https://doi.org/10.1016/j.aim.2011.07.012},
}

@misc{BMS24,
title={Pinned Dot Product Set Estimates}, 
      author={Paige Bright and Caleb Marshall and Steven Senger},
      year={2024},
      eprint={2412.17985},
      archivePrefix={arXiv},
      primaryClass={math.CA},
      url={https://arxiv.org/abs/2412.17985},
}

@article {GIT21,
    AUTHOR = {Greenleaf, Allan and Iosevich, Alex and Taylor, Krystal},
     TITLE = {Configuration sets with nonempty interior},
   JOURNAL = {J. Geom. Anal.},
  FJOURNAL = {Journal of Geometric Analysis},
    VOLUME = {31},
      YEAR = {2021},
    NUMBER = {7},
     PAGES = {6662--6680},
      ISSN = {1050-6926,1559-002X},
   MRCLASS = {28A80 (35S30 44A12)},
  MRNUMBER = {4289240},
       DOI = {10.1007/s12220-019-00288-y},
       URL = {https://doi.org/10.1007/s12220-019-00288-y},
}

@article {GI12,
    AUTHOR = {Greenleaf, Allan and Iosevich, Alex},
     TITLE = {On triangles determined by subsets of the {E}uclidean plane,
              the associated bilinear operators and applications to discrete
              geometry},
   JOURNAL = {Anal. PDE},
  FJOURNAL = {Analysis \& PDE},
    VOLUME = {5},
      YEAR = {2012},
    NUMBER = {2},
     PAGES = {397--409},
      ISSN = {2157-5045,1948-206X},
   MRCLASS = {42B15 (52C10)},
  MRNUMBER = {2970712},
MRREVIEWER = {Andreas\ Seeger},
       DOI = {10.2140/apde.2012.5.397},
       URL = {https://doi.org/10.2140/apde.2012.5.397},
}

@incollection {EHI13,
    AUTHOR = {Erdogan, Burak and Hart, Derrick and Iosevich, Alex},
     TITLE = {Multiparameter projection theorems with applications to
              sums-products and finite point configurations in the
              {E}uclidean setting},
 BOOKTITLE = {Recent advances in harmonic analysis and applications},
    SERIES = {Springer Proc. Math. Stat.},
    VOLUME = {25},
     PAGES = {93--103},
 PUBLISHER = {Springer, New York},
      YEAR = {2013},
      ISBN = {978-1-4614-4565-4; 978-1-4614-4564-7},
   MRCLASS = {28A80 (42B08)},
  MRNUMBER = {3066881},
MRREVIEWER = {Li-Feng\ Xi},
       DOI = {10.1007/978-1-4614-4565-4\_11},
       URL = {https://doi.org/10.1007/978-1-4614-4565-4_11},
}

@article {PRA23,
    AUTHOR = {Palsson, Eyvindur Ari and Romero Acosta, Francisco},
     TITLE = {A {M}attila-{S}j\"olin theorem for triangles},
   JOURNAL = {J. Funct. Anal.},
  FJOURNAL = {Journal of Functional Analysis},
    VOLUME = {284},
      YEAR = {2023},
    NUMBER = {6},
     PAGES = {Paper No. 109814, 20},
      ISSN = {0022-1236,1096-0783},
   MRCLASS = {28A78 (42B20 52C10)},
  MRNUMBER = {4530888},
MRREVIEWER = {Vladimir\ Eiderman},
       DOI = {10.1016/j.jfa.2022.109814},
       URL = {https://doi.org/10.1016/j.jfa.2022.109814},
}

@article {PRA25,
    AUTHOR = {Palsson, Eyvindur Ari and Romero Acosta, Francisco},
     TITLE = {A {M}attila-{S}j\"olin theorem for simplices in low
              dimensions},
   JOURNAL = {Math. Ann.},
  FJOURNAL = {Mathematische Annalen},
    VOLUME = {391},
      YEAR = {2025},
    NUMBER = {1},
     PAGES = {1123--1146},
      ISSN = {0025-5831,1432-1807},
   MRCLASS = {28A75 (28A78 42B20 52A20)},
  MRNUMBER = {4846807},
MRREVIEWER = {Bochen\ Liu},
       DOI = {10.1007/s00208-024-02948-z},
       URL = {https://doi.org/10.1007/s00208-024-02948-z},
}

@article {GGIP15,
    AUTHOR = {Grafakos, Loukas and Greenleaf, Allan and Iosevich, Alex and
              Palsson, Eyvindur},
     TITLE = {Multilinear generalized {R}adon transforms and point
              configurations},
   JOURNAL = {Forum Math.},
  FJOURNAL = {Forum Mathematicum},
    VOLUME = {27},
      YEAR = {2015},
    NUMBER = {4},
     PAGES = {2323--2360},
      ISSN = {0933-7741,1435-5337},
   MRCLASS = {42B15 (05D05)},
  MRNUMBER = {3365800},
       DOI = {10.1515/forum-2013-0128},
       URL = {https://doi.org/10.1515/forum-2013-0128},
}

@article {GIT22,
    AUTHOR = {Greenleaf, Allan and Iosevich, Alex and Taylor, Krystal},
     TITLE = {On {$k$}-point configuration sets with nonempty interior},
   JOURNAL = {Mathematika},
  FJOURNAL = {Mathematika. A Journal of Pure and Applied Mathematics},
    VOLUME = {68},
      YEAR = {2022},
    NUMBER = {1},
     PAGES = {163--190},
      ISSN = {0025-5793,2041-7942},
   MRCLASS = {28A75 (28A80 52C10 58J40)},
  MRNUMBER = {4405974},
MRREVIEWER = {Xiumin\ Du},
       DOI = {10.1112/mtk.12114},
       URL = {https://doi.org/10.1112/mtk.12114},
}

@article {GIT24,
    AUTHOR = {Greenleaf, Allan and Iosevich, Alex and Taylor, Krystal},
     TITLE = {Nonempty interior of configuration sets via microlocal
              partition optimization},
   JOURNAL = {Math. Z.},
  FJOURNAL = {Mathematische Zeitschrift},
    VOLUME = {306},
      YEAR = {2024},
    NUMBER = {4},
     PAGES = {Paper No. 66, 20},
      ISSN = {0025-5874,1432-1823},
   MRCLASS = {28A75 (28A80 52C10 58J40)},
  MRNUMBER = {4716767},
MRREVIEWER = {Stefan\ Steinerberger},
       DOI = {10.1007/s00209-024-03466-z},
       URL = {https://doi.org/10.1007/s00209-024-03466-z},
}

@article {GIT25,
    AUTHOR = {Greenleaf, Allan and Iosevich, Alex and Taylor, Krystal},
     TITLE = {Realizing trees of configurations in thin sets},
   JOURNAL = {Pacific J. Math.},
  FJOURNAL = {Pacific Journal of Mathematics},
    VOLUME = {335},
      YEAR = {2025},
    NUMBER = {2},
     PAGES = {355--372},
      ISSN = {0030-8730,1945-5844},
   MRCLASS = {28A75 (42B35)},
  MRNUMBER = {4904870},
       DOI = {10.2140/pjm.2025.335.355},
       URL = {https://doi.org/10.2140/pjm.2025.335.355},
}

@article {GIP17,
    AUTHOR = {Greenleaf, Allan and Iosevich, Alex and Pramanik, Malabika},
     TITLE = {On necklaces inside thin subsets of {$\mathbb{R}^d$}},
   JOURNAL = {Math. Res. Lett.},
  FJOURNAL = {Mathematical Research Letters},
    VOLUME = {24},
      YEAR = {2017},
    NUMBER = {2},
     PAGES = {347--362},
      ISSN = {1073-2780,1945-001X},
   MRCLASS = {28A80 (43A46)},
  MRNUMBER = {3685274},
       DOI = {10.4310/MRL.2017.v24.n2.a4},
       URL = {https://doi.org/10.4310/MRL.2017.v24.n2.a4},
}

@article {McDonald21,
    AUTHOR = {McDonald, Alex},
     TITLE = {Areas spanned by point configurations in the plane},
   JOURNAL = {Proc. Amer. Math. Soc.},
  FJOURNAL = {Proceedings of the American Mathematical Society},
    VOLUME = {149},
      YEAR = {2021},
    NUMBER = {5},
     PAGES = {2035--2049},
      ISSN = {0002-9939,1088-6826},
   MRCLASS = {28A75},
  MRNUMBER = {4232196},
       DOI = {10.1090/proc/15348},
       URL = {https://doi.org/10.1090/proc/15348},
}

@article {GM22,
    AUTHOR = {Galo, Belmiro and McDonald, Alex},
     TITLE = {Volumes spanned by {$k$}-point configurations in {$\Bbb R^d$}},
   JOURNAL = {J. Geom. Anal.},
  FJOURNAL = {Journal of Geometric Analysis},
    VOLUME = {32},
      YEAR = {2022},
    NUMBER = {1},
     PAGES = {Paper No. 23, 26},
      ISSN = {1050-6926,1559-002X},
   MRCLASS = {28A75},
  MRNUMBER = {4349927},
MRREVIEWER = {Stefan\ Steinerberger},
       DOI = {10.1007/s12220-021-00763-5},
       URL = {https://doi.org/10.1007/s12220-021-00763-5},
}

@article {SY25,
    AUTHOR = {Shmerkin, Pablo and Yavicoli, Alexia},
     TITLE = {On the volumes of simplices determined by a subset of {$\Bbb
              R^d$}},
   JOURNAL = {Ann. Fenn. Math.},
  FJOURNAL = {Annales Fennici Mathematici},
    VOLUME = {50},
      YEAR = {2025},
    NUMBER = {1},
     PAGES = {97--108},
      ISSN = {2737-0690,2737-114X},
   MRCLASS = {28A78 (11B25 28A12 28A80)},
  MRNUMBER = {4877951},
       DOI = {10.54330/afm.159807},
       URL = {https://doi.org/10.54330/afm.159807},
}

@article {BIT16,
    AUTHOR = {Bennett, Michael and Iosevich, Alexander and Taylor, Krystal},
     TITLE = {Finite chains inside thin subsets of {$\Bbb{R}^d$}},
   JOURNAL = {Anal. PDE},
  FJOURNAL = {Analysis \& PDE},
    VOLUME = {9},
      YEAR = {2016},
    NUMBER = {3},
     PAGES = {597--614},
      ISSN = {2157-5045,1948-206X},
   MRCLASS = {28A75 (42B10)},
  MRNUMBER = {3518531},
MRREVIEWER = {Gareth\ Speight},
       DOI = {10.2140/apde.2016.9.597},
       URL = {https://doi.org/10.2140/apde.2016.9.597},
}

@incollection {IT19,
    AUTHOR = {Iosevich, A. and Taylor, K.},
     TITLE = {Finite trees inside thin subsets of {$\Bbb R^d$}},
 BOOKTITLE = {Modern methods in operator theory and harmonic analysis},
    SERIES = {Springer Proc. Math. Stat.},
    VOLUME = {291},
     PAGES = {51--56},
 PUBLISHER = {Springer, Cham},
      YEAR = {2019},
      ISBN = {978-3-030-26748-3; 978-3-030-26747-6},
   MRCLASS = {42B10 (05C05 28A80)},
  MRNUMBER = {4008977},
MRREVIEWER = {Peter\ R.\ Massopust},
       DOI = {10.1007/978-3-030-26748-3\_3},
       URL = {https://doi.org/10.1007/978-3-030-26748-3_3},
}

@article {OT22,
    AUTHOR = {Ou, Yumeng and Taylor, Krystal},
     TITLE = {Finite point configurations and the regular value theorem in a
              fractal setting},
   JOURNAL = {Indiana Univ. Math. J.},
  FJOURNAL = {Indiana University Mathematics Journal},
    VOLUME = {71},
      YEAR = {2022},
    NUMBER = {4},
     PAGES = {1707--1761},
      ISSN = {0022-2518,1943-5258},
   MRCLASS = {42B10 (28A78 52C10)},
  MRNUMBER = {4481098},
MRREVIEWER = {Rami\ Ayoush},
}

@misc{IMMM25,
      title={The VC-dimension and point configurations in $\mathbb{R}^d$}, 
      author={Alex Iosevich and Akos Magyar and Alex McDonald and Brian McDonald},
      year={2025},
      eprint={2510.13984},
      archivePrefix={arXiv},
      primaryClass={math.CA},
      url={https://arxiv.org/abs/2510.13984}, 
}

@misc{BIO23,
      title={A singular variant of the Falconer distance problem}, 
      author={Tainara Borges and Alex Iosevich and Yumeng Ou},
      year={2023},
      eprint={2306.05247},
      archivePrefix={arXiv},
      primaryClass={math.CA},
      url={https://arxiv.org/abs/2306.05247}, 
}

@article {LNPR,
    AUTHOR = {Lew, Alan and Nevo, Eran and Peled, Yuval and Raz, Orit E.},
     TITLE = {On the {$k$}-volume rigidity of a simplicial complex in {$\Bbb
              R^d$}},
   JOURNAL = {Forum Math. Sigma},
  FJOURNAL = {Forum of Mathematics. Sigma},
    VOLUME = {13},
      YEAR = {2025},
     PAGES = {Paper No. e195, 10},
      ISSN = {2050-5094},
   MRCLASS = {52C25},
  MRNUMBER = {4997997},
       DOI = {10.1017/fms.2025.10140},
       URL = {https://doi.org/10.1017/fms.2025.10140},
}

@article {CMSource,
    AUTHOR = {Liberti, Leo and Lavor, Carlile and Maculan, Nelson and
              Mucherino, Antonio},
     TITLE = {Euclidean distance geometry and applications},
   JOURNAL = {SIAM Rev.},
  FJOURNAL = {SIAM Review},
    VOLUME = {56},
      YEAR = {2014},
    NUMBER = {1},
     PAGES = {3--69},
      ISSN = {1095-7200,0036-1445},
   MRCLASS = {51K05 (51F15)},
  MRNUMBER = {3246296},
       DOI = {10.1137/120875909},
       URL = {https://doi.org/10.1137/120875909},
}

@article {HallSource,
    AUTHOR = {Cameron, Peter J.},
     TITLE = {Hall's marriage theorem},
   JOURNAL = {J. Lond. Math. Soc. (2)},
  FJOURNAL = {Journal of the London Mathematical Society. Second Series},
    VOLUME = {113},
      YEAR = {2026},
    NUMBER = {1},
     PAGES = {Paper No. e70378, 9},
      ISSN = {0024-6107,1469-7750},
   MRCLASS = {05D15 (05A16 05B15 05C70)},
  MRNUMBER = {5011584},
       DOI = {10.1112/jlms.70378},
       URL = {https://doi.org/10.1112/jlms.70378},
}

@article {KeletiShmerkin,
    AUTHOR = {Keleti, Tam\'as and Shmerkin, Pablo},
     TITLE = {New bounds on the dimensions of planar distance sets},
   JOURNAL = {Geom. Funct. Anal.},
  FJOURNAL = {Geometric and Functional Analysis},
    VOLUME = {29},
      YEAR = {2019},
    NUMBER = {6},
     PAGES = {1886--1948},
      ISSN = {1016-443X,1420-8970},
   MRCLASS = {28A75 (26A15 28A80 49Q15)},
  MRNUMBER = {4034924},
MRREVIEWER = {Lars\ Olsen},
       DOI = {10.1007/s00039-019-00500-9},
       URL = {https://doi.org/10.1007/s00039-019-00500-9},
}

@misc{GMP26,
      title={On volumes of simplices in intermediate dimensions}, 
      author={Jos\'{e} {Gaitan Montejo} and Eyvindur Palsson},
      year={2026},
      eprint={2605.22450},
      archivePrefix={arXiv},
      primaryClass={math.CA},
      url={https://arxiv.org/abs/2605.22450}, 
}

@unpublished{BOP2026,
  title={From weighted paraboloid restriction to $k$-stars and distance graphs},
  author={Borges, Tainara and Ou, Yumeng and Pasquariello, Marcus},
  year={2026},
  note={Forthcoming}
}

@article {DZ2019,
    AUTHOR = {Du, Xiumin and Zhang, Ruixiang},
     TITLE = {Sharp {$L^2$} estimates of the {S}chr\"odinger maximal
              function in higher dimensions},
   JOURNAL = {Ann. of Math. (2)},
  FJOURNAL = {Annals of Mathematics. Second Series},
    VOLUME = {189},
      YEAR = {2019},
    NUMBER = {3},
     PAGES = {837--861},
      ISSN = {0003-486X,1939-8980},
   MRCLASS = {42B20 (42B37)},
  MRNUMBER = {3961084},
MRREVIEWER = {Dong\ Dong},
       DOI = {10.4007/annals.2019.189.3.4},
       URL = {https://doi.org/10.4007/annals.2019.189.3.4},
}


\end{document}